\DeclareMathOperator{\spanop}{span}
\newcommand{\LL}{\mathsf{S}}
\newcommand{\NullL}{\mathrm{Null}\LL}
\newcommand{\VV}{\mathbb{V}}
\newcommand{\rd}{\mathrm{d}}
\newcommand{\Amat}{\mathsf{A}}
\newcommand{\Ymat}{\mathsf{Y}}
\newcommand{\Umat}{\mathsf{U}}
\newcommand{\Vmat}{\mathsf{V}}
\newcommand{\Qmat}{\mathsf{Q}}
\newcommand{\Gmat}{\mathsf{G}}
\newcommand{\Mmat}{\mathsf{M}}
\newcommand{\Wmat}{\mathsf{W}}
\newcommand{\Imat}{\mathsf{I}}
\newcommand{\Pmat}{\mathsf{P}}
\newcommand{\Kcal}{\mathcal{K}}
\newcommand{\vep}{\varepsilon}
\newcommand{\ran}{\text{ran\,}}
\newcommand{\diag}{\text{diag\,}}
\newcommand{\wt}[1]{\widetilde{#1}}
\newcommand{\kcresolved}[1]{}
\title{Random sampling and efficient algorithms for multiscale PDEs\thanks{
\funding{The work of Q.L.~is supported in part by a start-up fund from
	UW-Madison and National Science Foundation under the grant
	DMS-1619778. The work of J.L.~is supported in part by the National
	Science Foundation under award DMS-1454939 and KI-Net
	RNMS-1107444. The work of S.W.~is supported in part by NSF awards
	IIS-1447449, 1628384, and 1634597 and AFOSR Award FA9550-13-1-0138,
	and Subcontract 3F-30222 from Argonne National Laboratory.  K.C.,
	Q.L., and S.W. are supported by NSF award 1740707.}}}
\author{Ke Chen\thanks{Department of Mathematics, University of Texas at Austin, 2515 Speedway, Austin, TX 78712 USA.
		\email{kechen@math.utexas.edu}}
\and Qin Li\thanks{Mathematics Department, University of Wisconsin-Madison, 480 Lincoln Dr., Madison, WI 53706 USA.
		\email{qinli@math.wisc.edu}}
\and Jianfeng Lu\thanks{Department of Mathematics, Department of Physics and Department of Chemistry, Duke University, Box 90320, Durham, NC 27708 USA.
		\email{jianfeng@math.duke.edu}}
\and Stephen J. Wright\thanks{Computer Sciences Department, University of Wisconsin-Madison, 1210 W Dayton St, Madison, WI 53706 USA.
		\email{swright@cs.wisc.edu}} }
\begin{document}
	\maketitle
\begin{abstract}
We describe a numerical framework that uses random sampling to efficiently capture low-rank local solution spaces of multiscale PDE problems arising in domain decomposition. In contrast to existing techniques, our method does not rely on detailed analytical understanding of specific multiscale PDEs, in particular, their asymptotic limits. We present the application of the framework on two examples --- a linear kinetic equation and an elliptic equation with rough media. On these two examples, this framework achieves the asymptotic preserving property for the kinetic equations and numerical homogenization for the elliptic equations.
\end{abstract}

\begin{keyword}
	Randomized sampling, multiscale PDE, finite element method, domain decomposition
\end{keyword}	

\begin{AMS}
		65N30, 65N55
\end{AMS}

\section{Introduction}
Partial differential equations (PDEs) that involve multiple temporal
and spatial scales are numerically challenging to solve. The current
generation of efficient solvers exploits the analytic solution
structures that are intrinsic to each specific multiscale problem.
% Because of this, a typical multiscale solver is based on thorough
% detailed analytical results from PDE analysis.
In this work, we exploit instead the ``low-rank'' property of the
solution spaces that is common to many multiscale problems that are ``homogenizable"
, and design
a general framework in which analytic structures of solutions are
discovered automatically by the algorithms without the need for any
problem-specific analysis.

We consider the following boundary value problem:
\begin{equation}\label{eqn:general_u_ep}
\mathcal{L}^\vep u^\vep = 0\,,\quad\text{with}\quad\mathcal{B}u^\vep = f\,,
\end{equation}
where $\mathcal{L}^\varepsilon$ is a linear PDE operator with
multiscale structure, with $\varepsilon$ representing the small
scale. $\mathcal{B}$ is the boundary operator and $f$ is the boundary
condition. The solution $u^\varepsilon$ contains information at both
coarse scale $x$ and fine scale ${x}/{\varepsilon}$.  A naive
numerical scheme for \eqref{eqn:general_u_ep} would require a fine
discretization: The mesh size $h$ must resolve $\vep$ (that is,
$h\ll\vep$), and thus the number of grid points (the degrees of
freedom) $N_\vep$ is of the order of $\vep^{-d}$, with $d$ being the
dimension of the problem. For small $\vep$, the computational cost is
prohibitive. These observations have motivated research into
algorithms for multiscale PDE problems that are much more efficient
than such naive schemes.

One strategy commonly used by efficient algorithms is to exploit the
asymptotic behavior of the multiscale problems as $\vep \to 0$. In
particular, the ``effective equations'' that capture the behavior of
the solution as $\vep$ approaches zero have been derived for several
specific multiscale problems. More specifically, we seek a homogenized
operator $\mathcal{L}^\ast$, with no dependence on $\varepsilon$, such
that the solution $u^\ast$ of the ``effective equation''
\begin{equation}\label{eqn:general_u_ast}
\mathcal{L}^\ast u^\ast = 0\,,\quad\text{with}\quad\mathcal{B}u^\ast = f\,
\end{equation}
satisfies
\begin{equation}\label{eqn:convergence}
\|u^\varepsilon-u^\ast\| \to 0 \quad  \mbox{as $\varepsilon \to 0$}
\end{equation}
in a proper norm. Since $u^\ast$ is asymptotically equivalent to $u^\vep$
\eqref{eqn:convergence} and no small-scale oscillation is present,
solving \eqref{eqn:general_u_ast} can typically be done in a much more
efficient manner than directly solving \eqref{eqn:general_u_ep} with small $\vep$.

%% The smooth function $u^\ast$ requires moderately small
%% discretization and thus moderately big $N_\ast$, which leads to
%% much less computational effort.

Identifying the effective operator $\mathcal{L}^\ast$, however, is mostly nontrivial. Different techniques are needed for
different equations. The hydrodynamic limit of kinetic equations is
based on moment expansions and entropic closures; 
the homogenization of elliptic equations with oscillatory media is
based on corrector equations and two-scale convergence analysis; and
the semiclassical limit of Schr\"odinger equations is based on WKB
expansion and Wigner transformations. Each of these analytical tools
leads to a different algorithmic approach, so there is a wide variation
in algorithms for different multiscale problems.

We describe in this paper a general approach to
  designing efficient algorithms for multiscale PDE problems that does
  not rely on detailed analytical knowledge of the PDE and applies to
  a wide variety of problems. Our approach not only has the advantage
  of a unified treatment, but also applies to cases in which the
  asymptotic limit is not known, or is too complicated to
  derive. (See, for example, an application in~\cite{chen2019low}.)
  While the proposed approach might not be the most effective approach
  for every multiscale problem (for example, many numerical approaches
  have been developed over the years for elliptic PDEs with rough
  coefficients), its numerical performance compares favorably with
  known approaches for particular problems. We believe that the broad
  applicability of our generic approach is a significant advantage.

Our framework is based on domain decomposition together with random
sampling to characterize the local solution space on each patch in the
decomposition.  We make use of the fact that most multiscale PDEs that
have asymptotic limits independent of small scales also have local
solution spaces of low dimension.
% The random sampling technique efficiently reduces the dimension of
% the local solution space, and thus reduces the computational
% complexity and storage cost of the overall algorithm.

\begin{figure}[tb]
	%\vspace{-6em}
	\centering
	\includegraphics[width = 0.9\textwidth]{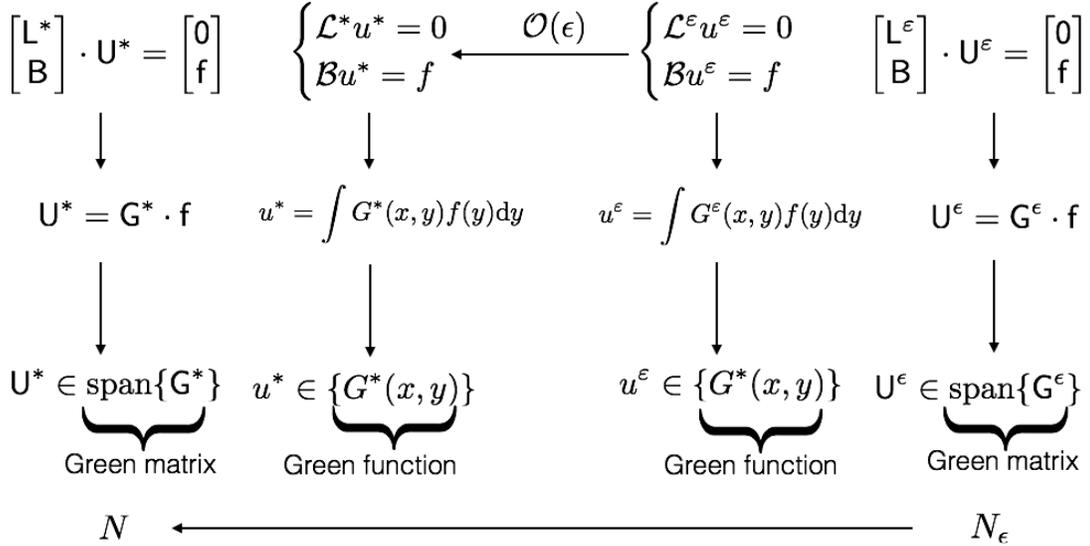}
	\vspace{-4em}
	\caption{Green's function superposition in discrete and
		continuous setting, for both $\mathcal{L}^\vep$ and the asymptotic
		limit $\mathcal{L}^\ast$. The column space of $\mathsf{G}^\vep$,
		which typically has high dimension, can be well approximated well by
		the (lower-dimensional) column space of $\mathsf{G}^\ast$.}
	\label{fig:low_rank}
\end{figure}

We illustrate the relationships between the multiscale PDE, its
discretization, and its asymptotic limits in \cref{fig:low_rank}.  The
key points of this diagram are as follows.
\begin{itemize}
	\item[1.] Both $u^\ast$ and $u^\vep$, the solutions to
	$\mathcal{L}^\ast$ and $\mathcal{L}^\vep$, respectively, are
	convolutions of Green's functions $G^{\vep/\ast}$ with the boundary
	conditions.% The second argument in the Green's function indicates where to put the point source $\delta$ on the boundary. \sw{This is a rather abrupt first mention of $\delta$. Is this sentence necessary?}
	\footnote{With slight danger of confusion, we adopt a generalized
		notion of ``Green's function'' in this work, which might vary from
		conventional terminology for specific PDEs. For example, for
		elliptic PDEs with Dirichlet boundary condition, the ``Green's
		function'' would be given by the Poisson kernel, that is, the
		derivative of the usual Green's function (Newtonian kernel).}
	
	%Therefore $u^\ast$ lives in the space spanned by all Green's function
	%$\{G^\ast(\cdot,y)\}$ and $u^\vep$ lives in the space spanned by
	%$\{G^\vep(\cdot,y)\}$; \jl{we somehow mix together terminologies for
	%discrete and continuous things, which could be confusing; for
	%example, how do we really define the span of $G^{\ast}(\cdot, y)$?
	%some closure of every possible finite combinations?} \kc{Maybe we
	%should just say $G^\ast(\cdot,y)$ is the solution space of
	%homogenized equation.}
	\item[2.] In the discrete setting, with $\mathsf{L}^\ast$ and
	$\mathsf{L}^\vep$ denoting the discrete operators and
	$\mathsf{G}^{\ast}$ and $\mathsf{G}^{\vep}$ the corresponding
	Green's matrices, the numerical solutions $U^{\ast}$ and $U^{\vep}$
	are in the column space spanned by the respective Green's matrices.
	\item[3.] As discussed above, accurate discretization of
	$\mathcal{L}^\vep$ requires $N_\vep\sim{\vep^{-d}}$ degrees of
	freedom, while discretization of $\mathcal{L}^\ast$ usually requires
	a modest number $N$ of degrees of freedom, independent of $\vep$,
	with $N \ll N_\vep$ for interesting values of $\vep$.
	%  The size of the corresponding Green's matrices are therefore
	%  significantly different: $N\ll N_\vep$.
	%The specific form of $\mathsf{L}$ is determined by the chosen
	%numerical methods (finite difference/element or spectral method), and
	%the size of the matrix is determined by the method's convergence
	%order and the preset numerical error tolerance. According to the CFL
	%condition, \jl{why CFL? we are not solving time dependent problems}
	%the size of $\mathsf{L}^\ast$, denoted by $N_\ast$, has no $\vep$
	%dependence, while that of $\mathsf{L}^\vep$, denoted by $N_\vep$
	%grows as $\vep$ shrinks.  \jl{do we fix the discretization as
	%$\varepsilon$ varies?} ~\ql{no}We also denote
	%$\mathsf{G}^{\ast/\vep}$ the corresponding Green's matrices, and they
	%are usually simply the inverse of the discrete operators. Then it is
	%easy to see that the numerical solutions $U^{\ast/\vep}$ are in the
	%column space spanned by the Green's matrices.
\end{itemize}

\cref{fig:low_rank} suggests that if $\Umat^\ast$ and
$\Umat^\vep$ are good numerical approximations to $u^\ast$ and
$u^\vep$, respectively, and since $u^\ast$ and $u^\vep$ are close when
$\vep$ is small, then $\Umat^\ast$ and $\Umat^\vep$ should also be
close to each other. Since $\Umat^\vep$ and $\Umat^\ast$ lie in the
column spaces of $\Gmat^\vep$ and $\Gmat^\ast$ respectively, the two
matrices should therefore have similar column spaces. Without knowing
the effective equations, it may not be possible to identify
$\Gmat^\ast$ explictly, but we can still obtain essential information
contained in $\Gmat^\ast$ from $\Gmat^{\vep}$.  For this task, we need
to determine, first, how much column-space information is contained in
$\Gmat^\vep$ and, second, how to extract this information.

Regarding the first question, we define ``numerical rank'' to be the
minimum number of degrees of freedom required to capture the solution
space of a PDE to within a preset error tolerance. The concept is
closely connected to Kolmogorov $N$-width.
%  and is the best representation of the solution space that is being sought.
To address the second question, we employ random sampling: The range
of a matrix with low numerical rank can be captured by multiplying the
matrix by a set of random vectors. We adapt this strategy to sketch
the local solution space of the PDE via random sampling.

%Random sampling for numerical PDEs has been explored in previous
%works, mainly for multiscale elliptic equations. Random sampling has
%been used to construct local basis functions for the generalized
%finite element method; see
%\cite{calo2016randomized,Lipton16,Smetana_random} and our previous
%work \cite{ChenLiLuWright}, in which we report on numerical
%experiments to determine optimal sampling strategies. 
%In particular, the connection to randomized linear algebra has been made in
%\cite{Smetana_random, ChenLiLuWright}  for numerical homogenization of elliptic equations. 
%From another perspective, in
%\cite{Martinsson_HSS,LLY:11}, randomized linear algebra algorithms are
%used to compress the Green's matrix of elliptic equations based on the
%framework of hierarchical matrices~\cite{Hackbusch2015}. 
%%%% rewrite down below
  Random sampling for numerical PDEs has been explored
  in previous works, mainly for multiscale {\em elliptic}
  equations. In particular, it has been used to construct local basis
  functions for the generalized finite element method; see
  \cite{calo2016randomized,Lipton16,Smetana_random} and our previous
  work \cite{ChenLiLuWright}, in which we report on numerical
  experiments to determine optimal sampling strategies. In
  \cite{Owhadi_bayesian,owhadi_siam_review}, numerical homogenization
  is reformulated as a Bayesian inference problem through observation
  of random samplings, where orthogonal basis functions in
  $H_0^1(\Omega)$ could be obtained by nested measurements of
  solutions or source terms. This approach is consistent with
  randomized linear algebra approaches that use random projections of
  a matrix to provide good approximations to the left/right singular-vector space
  corresponding to the largest singular values of that matrix.  Similar
  connections to randomized linear algebra have been made in
  \cite{Smetana_random, ChenLiLuWright} for numerical homogenization
  of elliptic equations.  From another perspective
  \cite{Martinsson_HSS,LLY:11}, randomized linear algebra algorithms
  are used to compress the Green's matrix of elliptic equations based
  on the framework of hierarchical matrices~\cite{Hackbusch2015}.  
Most of these works that exploit randomized sampling
consider only elliptic equations with oscillatory media, while the
method we propose in this paper applies to more general situations.
(The authors learnt about \cite{Smetana_random}
  while drafting the current paper. That work also seeks low-rank
  representations based on domain decomposition for elliptic
  equations, but it does not utilize homogenization theory, nor does
  it extend to general multiscale PDEs. As mentioned previously, the
  main value of the proposed approach is that it brings a unified
  framework for various PDE problems exhibiting multiscale features.)

The remainder of the paper is organized as follows. In~\cref{sec:AP_NH}, we review two representative case studies of
multiscale PDEs: the linear kinetic equation with small Knudsen number and
an elliptic PDE with oscillatory media. Motivated by the essential
similarity of these multiscale problems, we define in~\cref{sec:framework} the notion of numerical rank and design a
general framework for efficient algorithms based on domain
decomposition and random sampling of local solution
space.~\cref{sec:rte} and \cref{sec:elliptic} describe details
of the application of our framework to the two problems introduced in~\cref{sec:AP_NH}. Numerical results demonstrate that the
general methodology yields competitive algorithms, without the need
for detailed analytical knowledge of the specific structure of the
multiscale problems at hand.

\section{Asymptotic preserving scheme and numerical homogenization}\label{sec:AP_NH}

In this section we briefly summarize the asymptotic preserving scheme
and numerical homogenization. These approaches were developed for two
rather different multiscale problems, but they share the similar
philosophy of finding a set of ``effective equations'' that are
numerically simpler than the original PDE in some sense, and utilizing
these equations in efficient numerical solvers. These approaches are
closely related to our randomized methodology and will serve to
motivate our approach.
% The first two subsections are dedicated to the two
% methods AP scheme and numerical homogenization respectively. In
% Section 2.3 we summarize the connection of the two methods through the
% definition of numerical rank.

\subsection{Asymptotic preserving scheme for kinetic equations}\label{sec:ap}
The asymptotic preserving (AP) scheme was developed originally in the
context of numerical methods for kinetic theory.
% The AP solvers were developed for linear Boltzmann equation, and the
% AP solvers for the nonlinear kinetic equations were developed ten
% years later. \jl{references missing} ~\ql{we have it in the later
% sections. do we need to do it here too?} \jl{yes I think so}
We will explain the idea using the radiative transfer equation, a
particular linear Boltzmann equation that is a model problem in
kinetic theory.

%\subsubsection{The radiative transfer equation and its diffusion limit}
In radiative transfer, we seek a function $u^\varepsilon(x,v)$,
defined on the phase space $(x,v)\in\Kcal\times\VV$, that represents
the density of photons at location $x$ with speed $v$.  The
% radiative transfer
equation is
\begin{equation}\label{eqn:rte_general}
-v\cdot\nabla_xu^\varepsilon + \frac{1}{\varepsilon}\mathsf{S}[u^\varepsilon] = g(x)\,,\quad (x,v)\in\Kcal\times\VV\,, 
\end{equation}
where the linear collision operator $\LL$ is defined as follows:
\begin{equation} \label{eqn:def.LL}
\LL u(x,v) = \int_\VV k(x,v,v')u(x,v') \rd v' - \int_\VV k(x,v',v)\rd{v'}u(x,v)\,.
%&=\int_\VV k(x,v,v')u(x,v') \rd v' - \sigma(x,v)u(x,v)\,.
\end{equation}
In Eq.~\eqref{eqn:rte_general}, the evolution of photon density is
governed by the transport term $v\cdot\nabla_xu^\vep$, that describes
the photons free streaming with speed $v$ in direction $x$, and the
collision term $\LL$ that characterizes the interaction of the
particles with the background media. The first term in $\LL$
represents particles with velocity $v'$ that are scattered off to
obtain $v$, while the second term indicates the particles whose
velocity changes from $v$ to $v'$.
% from an original velocity of $v$.
The specific form of $k(x,v,v')$ depends on the media.  When the
scattering is homogeneous in velocity, we can write $k(x,v,v') =
\sigma(x)$ for some $\sigma$, so that \eqref{eqn:def.LL} becomes
\begin{align}\label{eqn:def.LL.hom}
\LL u(x,v) &= \sigma(x)\int_\VV \left(u(x,v') - u(x,v)\right)\rd{v'}\,.
%&=\int_\VV k(x,v,v')u(x,v') \rd v' - \sigma(x,v)u(x,v)\,.
\end{align}

% For example, Henyey-Greenstein
%model~\cite{Henyey_Greenstein_1941} uses
%\begin{equation*}
%k(x,v,v') = \frac{1}{2} \frac{1-g^2}{1+g^2 + 2g (v\cdot v')}
%\end{equation*}
%where $g$ is a constant.

In the radiative transfer equation \eqref{eqn:rte_general}, the quantity
$\varepsilon$, which captures the strength of the collision term, is
called the {\em Knudsen number}. When $\vep$ is small, the collision
term dominates the transport term, and we have $\LL[u^\vep]=0$, to
leading order. In this case, the solution is close to lying in the
null space of $\LL$, that is, the solution profile nearly achieves
local equilibrium for every $x$. Via asymptotic expansion, we have
that $u^\varepsilon(x,v)\to M(v)u^\ast(x)$, where $u^\ast(x)$ solves
the heat equation and $M(v)$ (called the {\em local equilibrium} or
the {\em Maxwellian}) spans $\NullL$. More specifically, we
have the following result~\cite{BSS84, LK74, Papa75} for homogeneous collision~\eqref{eqn:def.LL.hom}. 
%~\kc{I changed definition of $\Kcal$ in the following theorem as all 3 references consider the bounded domain case and so does our method in section 4.}
\begin{theorem}\label{thm:hom_transport}
	Suppose that $u^\vep$ solves~\eqref{eqn:rte_general} with collision term~\eqref{eqn:def.LL.hom} in $\mathcal{K}$, which is a bounded domain in $\mathbb{R}^3$ with $C^1$ boundary,
	with $\VV = \mathbb{S}^{2}$, and with boundary condition
	\begin{equation}\label{eqn:transportBdy}
	u^\vep(x,v) = \phi(x,v) \quad\text{on}\quad x\in\partial\mathcal{K}\,, \;\; v\cdot n_x<0\,.
	\end{equation}
	Then $M(v)=1$ and
	\begin{equation}\label{eqn:conv_rte}
	\|u^\vep(x,v) - u^\ast(x)\|_{L_2(\rd x\rd v)} \to 0\,,
	\end{equation}
	where $u^\ast(x)$ solves
	\begin{equation}\label{eqn:diff}
	\frac{1}{3}\nabla_x\cdot \left(\frac{1}{\sigma}\nabla_xu^\ast \right) =g(x)\,,\quad x\in\mathcal{K}\,,
	\end{equation}
	with the boundary condition
	\begin{equation*}
	u^\ast(x) = \xi_\phi(x)\,,\quad\text{on}\quad x\in\partial\mathcal{K}\,,
	\end{equation*}
	where $\xi_\phi(x)$ is obtained by solving the boundary layer equation~\cite{Papa75}.
\end{theorem}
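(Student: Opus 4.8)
The plan is to establish this classical parabolic (diffusion) limit of linear transport by a Hilbert/Chapman--Enskog expansion together with a boundary-layer correction, in the spirit of~\cite{BSS84,LK74,Papa75}. First I would construct an interior approximation by positing $u^\vep \approx u_0(x) + \vep u_1(x,v) + \vep^2 u_2(x,v)$ and matching powers of $\vep$ after substitution into~\eqref{eqn:rte_general} (read in the parabolic scaling). Since $\LL$ in~\eqref{eqn:def.LL.hom} is self-adjoint and negative semidefinite on $L^2(\VV)$ with $\NullL = \spanop\{1\}$ and range equal to the mean-zero functions of $v$, the leading balance forces $u_0$ to be $v$-independent --- so $M(v)=1$ and $u_0 = u^\ast(x)$ --- the next balance is the solvable equation $\LL u_1 = v\cdot\nabla_x u_0$ with mean-zero solution $u_1 = -\sigma^{-1}\, v\cdot\nabla_x u^\ast$, and the Fredholm solvability condition at the following order, using $\int_{\mathbb{S}^2} v\otimes v\,\rd v = \tfrac13\Imat$ (up to the normalization of $\rd v$), produces precisely the effective equation~\eqref{eqn:diff}. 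This portion is essentially mechanical once the spectral properties of $\LL$ are recorded.

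The second ingredient is the boundary layer. The interior ansatz $u^\ast + \vep u_1 + \cdots$ cannot meet the inflow condition~\eqref{eqn:transportBdy}, because its leading term is isotropic in $v$ while $\phi$ is not. Near $\partial\Kcal$ I would flatten the boundary locally (exploiting the $C^1$ regularity and a partition of unity), introduce the stretched normal variable $z = \mathrm{dist}(x,\partial\Kcal)/\vep$, and solve the associated half-space Milne problem $(v\cdot n_x)\,\partial_z\psi = \LL\psi$ on $z>0$ with $\psi|_{z=0} = \phi - u^\ast$ prescribed on the incoming hemisphere $v\cdot n_x<0$. The classical theory for this problem gives a unique bounded solution that converges (exponentially) to a $v$-independent constant as $z\to\infty$; that constant is exactly $\xi_\phi(x)$ and serves as the Dirichlet datum for~\eqref{eqn:diff}. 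Adding a cutoff of $\psi$ to the interior expansion yields a global approximate solution $u^{\mathrm{app}}$.

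Finally I would close the argument with a stability/energy estimate for the transport equation. Substituting $u^{\mathrm{app}}$ into~\eqref{eqn:rte_general} leaves a residual that is $O(\vep)$ in the interior and, after accounting for the boundary-layer cutoff, $O(\vep^{1/2})$ in $L^2(\rd x\,\rd v)$; the inflow problem for~\eqref{eqn:rte_general} is well posed and dissipative (testing against $u^\vep$ and using $\langle\LL u^\vep,u^\vep\rangle\le 0$ controls $u^\vep$ and $(u^\vep - \langle u^\vep\rangle_v)/\sqrt\vep$), so $\|u^\vep - u^{\mathrm{app}}\|_{L^2(\rd x\,\rd v)} = o(1)$, and since $\|u^{\mathrm{app}} - u^\ast\|_{L^2}\to 0$ as well, the convergence~\eqref{eqn:conv_rte} follows.

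I expect the boundary layer to be the main obstacle: proving well-posedness and the far-field convergence of the Milne half-space problem, and then gluing it to the interior expansion along a merely $C^1$ curved boundary --- where tangential derivatives of $u^\ast$, $u_1$, $u_2$ are only marginally controlled --- is delicate, and typically forces an approximation/mollification argument (or extra interior regularity for~\eqref{eqn:diff}) together with estimates for the half-space problem that are uniform in the boundary point. The interior expansion and the concluding energy estimate are, by comparison, routine. Since the result is quoted from~\cite{BSS84,LK74,Papa75}, simply citing those works for the full details is also acceptable.
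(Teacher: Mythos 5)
The paper gives no proof of this theorem---it is quoted directly from the classical references \cite{BSS84,LK74,Papa75}---and your sketch reproduces exactly the argument used there: interior Hilbert expansion with the spectral properties of $\LL$ giving $M(v)=1$ and the $\tfrac13\nabla_x\cdot(\sigma^{-1}\nabla_x)$ limit, a half-space Milne problem to supply the boundary datum $\xi_\phi$, and a dissipative energy estimate for the remainder. The only slight imprecision is in the boundary-layer step: one solves the Milne problem with incoming datum $\phi$ and \emph{defines} $\xi_\phi$ as its far-field limit (equivalently, requiring your layer with datum $\phi-u^\ast$ to decay forces $u^\ast|_{\partial\Kcal}=\xi_\phi$ by linearity), which removes the apparent circularity of prescribing $\phi-u^\ast$ before $u^\ast$ is determined.
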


This result indicates that the limiting operator as $\vep \to 0$ is
$\mathcal{L}^\ast := (1/3)
\nabla_x\cdot\left(({1}/{\sigma})\nabla_x\right)$, which is
independent of the velocity variable. The constant changes with the
dimension of $v$; $1/3$ is the appropriate value for $\Kcal \subset
\mathbb{R}^3$. 
% For the bounded domain problem, boundary layer presents \sw{I didn't understand this phrase} and the related research can be found in~\cite{WG2014,li_diffusion_2015,Li2017}.

\begin{remark}
	Here we only present the least complicated case, in which the collision is homogeneous~\eqref{eqn:def.LL.hom}, and we do not specify the convergence rate in~\eqref{eqn:conv_rte}. If the collision operator $\LL$ is not homogeneous in $v$, the Maxwellian $M(v)$ could
	have a complicated form, and the theorem must be modified accordingly. It was long believed that with the correct boundary-layer equation introduced in~\cite{Papa75} to translate the boundary conditions from that of $u^\vep$ to that of $u^\ast$, the convergence rate is first order (that is, $u^\vep-u^\ast=\mathcal{O}(\vep)$). Recently, however, this was shown not to be the case; see \cite{WG2014,LiLuSun_DD_AP,Li2017}, which show that the boundary layer corrector can reduce the convergence order to less than $1$. The sharpest bound is still unknown.
\end{remark}
% \sw{I know there was some discussion about bounded domain, hence this new text, but it is a bit puzzling to me that the
%  boundary conditions do not seme to enter at all into the theorem. Are
%  you just assuming that the same boundary conditions are applied for
%  $u^\vep$ and $u^\ast$?}\ql{boundary condition has to be homogeneous for the proof to be valid. We tried to avoid talking about boundary layers, which can be drastically difficult.}\jl{how about just say that $u^{\vep} \to u^\ast$ in $L^2$ without the rate? then we don't need to talk much about boundary layers} \ql{sounds good. but if we use inhomogeneous boundary condition, we need to mention the boundary layer equation. see above. If you think this is not good, we can stick to homogeneous boundary condition.}

%\subsubsection{Asymptotic preserving scheme}
AP, both as a term and a concept, was coined in~\cite{Jin_AP}, although the
development of AP in the context of the radiative transfer equation
dates back to earlier works~\cite{JL93, LM89}. The fundamental idea is
that a good numerical method, besides being consistent and stable,
should also (for fixed discretization) preserve the asymptotic limit
of the original equation. As shown in \cref{fig:AP}, one
designs a method $\mathcal{F}^h_\vep$ for a system $\mathcal{F}_\vep$,
and asks (1) whether the discrete system, with fixed $h$, converges
when $\vep$ shrinks; and (2) if it does converge, whether the limit as
$h \to 0$ correctly discretizes $\mathcal{F}_\ast$, the limiting
system on the continuous level.  If $\mathcal{F}^h_\vep$ satisfies
both properties, it is said to be asymptotic preserving.
\begin{figure}[h]
	\centering
	\includegraphics[width = 0.6\textwidth,height = 0.2\textheight]{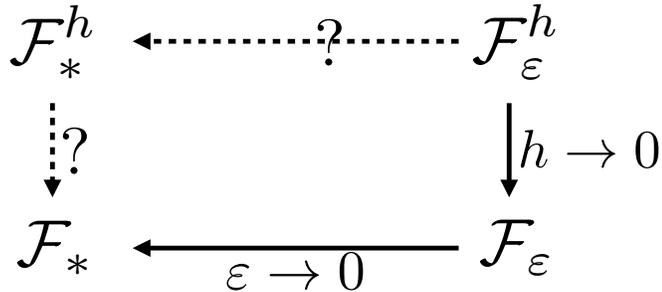}
	\caption{Commuting diagram of asymptotic preserving (AP)
		schemes. An AP solver $\mathcal{F}^h_\vep$ should, in the zero limit of $\vep$ for fixed $h$, capture the solution to $\mathcal{F}_\ast$.}\label{fig:AP}
\end{figure}

This AP property is not easy to satisfy in general. For conventional
schemes, we need $h\ll \vep$ for accuracy, so we cannot in practice
fix $h$ as $\vep \to 0$. AP schemes have to be designed carefully by
using analytic knowledge about the limiting operator
$\mathcal{F}_{\ast}$. Much progress has been made in the past decade.
For linear equations, an even-odd decomposition approach has
been designed, with the even part capturing the limit and the odd part
capturing the second order
expansion~\cite{Klar98,JPT00,LM08,GK:10}. Another approach uses a
preconditioned conjugate gradient that exploits the structure of the
discrete matrix ~\cite{Azmy02,li_wang_2017}.  In the nonlinear
setting, the BGK penalization method was developed
in~\cite{FJ10,BLM_MM} and methods based on the Wild sum~\cite{Wild}
were described in~\cite{DP,expo2} (see also
\cite{Degond-Rev13,HuJinLi16}). Most of these methods are designed for
time-dependent problems. 
Because of  the
limited analytic knowledge about kinetic boundary layers, there are
very few AP solvers for time-independent problems (see~\cite{LiLuSun_half_space,GolseKlar:95,LiLuSun_DD_AP}).

\subsection{Numerical homogenization}
Consider elliptic equations in divergence form with highly oscillatory media:
\begin{equation}\label{eqn:diff_ep}
\begin{cases}
\nabla_x\cdot\left(a\left(\frac{x}{\varepsilon}\right)\nabla_xu^\vep\right) = g & \; \text{in }\Kcal\,, \\
u^\vep = f & \; \text{on } \partial\Kcal\,,
\end{cases}
\end{equation}
where $\Kcal$ is a bounded Lipchitz domain and $0<\vep \ll 1$ characterizes the small scale in the problem. We assume $a(\cdot )$ is bounded below and above by positive constants. We also assume $a(\cdot)$  is H\"older continuous and $1$-periodic, so problem \eqref{eqn:diff_ep} is elliptic with highly-oscillatory media.
As $\vep$ goes to zero, the
solution $u^\vep$ converges to that of a homogenized equation
\begin{equation}\label{eqn:diff_limit}
\begin{cases}
\nabla_x\cdot\left(a^\ast(x)\nabla_xu^\ast\right)=g & \; \text{in }\Kcal\,, \\
u^\ast = f \,, & \; \text{on } \partial\Kcal \,,
\end{cases}
\end{equation}
in the sense that
\[
\|u^\vep - u^\ast\|_2 = \mathcal{O}(\vep)\,.
\]
Here $a^\ast$ is termed the {\em effective media}~\cite{BLP,Allaire_hom,moskow_vogelius_1997}.

\begin{theorem}[Theorem 1.1 in ~\cite{Kenig2012}]\label{thm:hom_diff}
	Denote $u^\vep$ is the solution to Equation \eqref{eqn:diff_ep} and $u^\ast$ the solution to the effective equation \eqref{eqn:diff_limit}. With same assumptions on $\Kcal$ and $a(\cdot)$ as above, then for any $g\in L^2(\Kcal)$ and $f\in H^1(\partial \Kcal)$,
	if $u^\ast \in H^2(\Kcal)$, then we have the strong convergence in $L_2$:
	\begin{equation}\label{eqn:conv_de}
	\| u^\vep - u^\ast \|_{L^2(\Kcal)} \leq C_\sigma \vep | \ln(\vep) |^{\frac{1}{2}+\sigma} \left[ \| g \|_{L^2(\Kcal)} + \| f \|_{H^1(\partial \Kcal )}\right]
	\end{equation}
	for any $\sigma>0$.
\end{theorem}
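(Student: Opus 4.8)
The plan is the classical one for quantitative periodic homogenization: build a two-scale approximation $w^\vep$ of $u^\vep$ out of $u^\ast$ and the cell correctors, bound the residual it leaves in the equation, run an energy estimate to obtain an $H^1$-rate, and then upgrade to the $L^2$-rate by a duality (Aubin--Nitsche) argument. The interior part of this is routine; essentially all of the difficulty, and the whole logarithmic loss, is caused by the boundary layer, which is where the Lipschitz-only regularity of $\Kcal$ (and the hypothesis $u^\ast\in H^2$) enters.

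First I would introduce, for $j=1,\dots,d$, the periodic corrector $\chi_j$ solving the cell problem $\nabla_y\cdot\!\big(a(y)(\nabla_y\chi_j+e_j)\big)=0$ on the unit torus with $\int\chi_j=0$, together with the flux (dual) corrector $(E_{kij})$, skew-symmetric in $(k,i)$, characterized by $a(y)(\partial_i\chi_j+\delta_{ij})-a^\ast_{ij}=\partial_k E_{kij}$. Hölder continuity and periodicity of $a$ make $\chi_j$ and $E_{kij}$ bounded. To match the Dirichlet data I would use the \emph{Dirichlet corrector} $\Phi^\vep_j$, i.e.\ the $\nabla\cdot(a(x/\vep)\nabla\,\cdot\,)$-harmonic function on $\Kcal$ with $\Phi^\vep_j=x_j$ on $\partial\Kcal$, which equals $x_j+\vep\chi_j(x/\vep)$ in the interior up to a boundary-layer term. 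The approximation is
\[
w^\vep \;=\; u^\ast \;+\; \big(\Phi^\vep_j-x_j\big)\,S_\vep\!\big(\partial_j u^\ast\big),
\]
with $S_\vep$ a mollification at scale $\vep$ inserted so that second derivatives of $u^\ast$ never appear unsmoothed; note $w^\vep=u^\ast=f$ on $\partial\Kcal$ by construction.

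Next I would compute $\nabla\cdot\!\big(a(x/\vep)\nabla(u^\vep-w^\vep)\big)$. Using the cell equation for $\chi_j$, the defining relation for $(E_{kij})$, and the homogenized equation for $u^\ast$, the terms that would otherwise cost a derivative cancel, and the remainder is rewritten in divergence form $\nabla\cdot h^\vep$, where $\|h^\vep\|_{L^2(\Kcal)}$ is controlled by $C\vep\|u^\ast\|_{H^2(\Kcal)}$, by the mollification error, and by boundary-layer contributions involving $\Phi^\vep_j-x_j-\vep\chi_j(x/\vep)$ and $\nabla u^\ast$ restricted to the strip $\Kcal_\vep=\{x\in\Kcal:\mathrm{dist}(x,\partial\Kcal)<\vep\}$. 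Since $u^\vep-w^\vep\in H^1_0(\Kcal)$, the energy estimate (ellipticity of $a$ plus Poincaré) gives $\|\nabla(u^\vep-w^\vep)\|_{L^2}\lesssim\|h^\vep\|_{L^2}$. Here the Hardy inequality $\int_\Kcal\frac{|\nabla u^\ast|^2}{\mathrm{dist}(x,\partial\Kcal)}\,\rd x\lesssim\|u^\ast\|_{H^2(\Kcal)}^2$ (the precise reason $u^\ast\in H^2$ is assumed) yields $\|\nabla u^\ast\|_{L^2(\Kcal_\vep)}\lesssim\vep^{1/2}\|u^\ast\|_{H^2}$, while the sharp near-boundary estimates for $\Phi^\vep_j$ available on a merely Lipschitz domain are weaker than in the smooth case and contribute a factor $|\ln\vep|^{1/2}$; optimizing the interplay between the smoothing scale and the width of the boundary strip, together with a slightly lossy square-function/$L^p$ bound on $\partial\Kcal$, accounts for the remaining arbitrarily small power $|\ln\vep|^{\sigma}$. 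The net result is
\[
\|u^\vep-w^\vep\|_{H^1(\Kcal)}\;\lesssim\;\vep^{1/2}|\ln\vep|^{1/2+\sigma}\big(\|g\|_{L^2(\Kcal)}+\|f\|_{H^1(\partial\Kcal)}\big),
\]
and since $\|(\Phi^\vep_j-x_j)S_\vep(\partial_j u^\ast)\|_{L^2}\lesssim\vep\|u^\ast\|_{H^1}$ this already gives $\|u^\vep-u^\ast\|_{L^2}\lesssim\vep^{1/2}|\ln\vep|^{1/2+\sigma}$ times the same data norm --- but this is still short of the claimed rate by a factor $\vep^{1/2}$.

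Finally I would recover the missing $\vep^{1/2}$ by duality. Given $F\in L^2(\Kcal)$, let $\psi$ solve the homogenized problem $\nabla\cdot(a^\ast\nabla\psi)=F$ in $\Kcal$ with $\psi=0$ on $\partial\Kcal$ (so $\|\psi\|_{H^2}\lesssim\|F\|_{L^2}$), and let $\psi^\vep$ be the two-scale approximation of $\psi$ built exactly as $w^\vep$. Testing $\nabla\cdot h^\vep=\nabla\cdot\!\big(a(x/\vep)\nabla(u^\vep-w^\vep)\big)$ against $\psi$, then replacing $\psi$ by $\psi^\vep$ in the integrals and integrating by parts, every term carries one factor of size $\vep^{1/2}|\ln\vep|^{1/2+\sigma}$ from $u^\vep-w^\vep$ and a second factor of the same size from $\psi-\psi^\vep$, so that $\big|\int_\Kcal(u^\vep-u^\ast)\,F\,\rd x\big|\le C_\sigma\,\vep|\ln\vep|^{1/2+\sigma}\|F\|_{L^2}\big(\|g\|_{L^2}+\|f\|_{H^1(\partial\Kcal)}\big)$; taking the supremum over $F$ with $\|F\|_{L^2}\le1$ yields \eqref{eqn:conv_de}. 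The main obstacle, as emphasized, is the boundary layer on a Lipschitz domain: establishing the sharp near-boundary estimates for the Dirichlet corrector and combining them with the Hardy inequality and the smoothing so as to lose no more than $|\ln\vep|^{1/2+\sigma}$ --- the interior estimates are entirely standard.
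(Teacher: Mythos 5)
This theorem is not proved in the paper at all: it is imported verbatim (with citation) as Theorem~1.1 of \cite{Kenig2012}, so there is no internal proof to compare your plan against; the relevant benchmark is the Kenig--Lin--Shen argument itself. Your outline does follow the standard quantitative-homogenization strategy (cell correctors and skew-symmetric flux correctors, a smoothed first-order two-scale expansion with a Dirichlet/boundary corrector, an energy estimate, then duality), which is the right family of ideas. But two concrete points make the plan, as written, fall short of the stated estimate \eqref{eqn:conv_de}. First, your duality step rests on ``let $\psi$ solve the homogenized problem with right-hand side $F$, so $\|\psi\|_{H^2}\lesssim\|F\|_{L^2}$.'' On a merely Lipschitz domain $\Kcal$ this $H^2$ regularity is false in general; note that the theorem \emph{assumes} $u^\ast\in H^2(\Kcal)$ precisely because it cannot be deduced, and the dual solution comes with no such hypothesis. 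This is not a cosmetic issue: circumventing the lack of $H^2$ (and of Lipschitz-domain regularity for the Dirichlet corrector) is exactly the hard content of \cite{Kenig2012}, where the authors instead invoke their layer-potential/nontangential-maximal-function and square-function estimates for $L_\vep$ on Lipschitz domains (available under the symmetry and H\"older-continuity hypotheses), and it is this machinery --- not the textbook Aubin--Nitsche step --- that produces the $|\ln\vep|^{1/2+\sigma}$ loss.

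Second, the bookkeeping of the logarithm does not close. You assign a factor $\vep^{1/2}|\ln\vep|^{1/2+\sigma}$ to the primal error $u^\vep-w^\vep$ and ``a second factor of the same size'' to $\psi-\psi^\vep$; the product is $\vep|\ln\vep|^{1+2\sigma}$, which is strictly weaker than $\vep|\ln\vep|^{1/2+\sigma}$ for every $\sigma>0$, so the claimed rate cannot be recovered this way. In a correct accounting the clean factor $\vep^{1/2}$ from each side comes from boundary-strip bounds of the type $\|\nabla u^\ast\|_{L^2(\{\mathrm{dist}(x,\partial\Kcal)<\vep\})}\lesssim \vep^{1/2}\|u^\ast\|_{H^2}$, and the logarithmic loss must be allowed to enter only once (through the near-boundary control of $\nabla u^\vep$, respectively of the Dirichlet corrector, obtained from the nontangential estimates and a dyadic summation over strips, with the extra $\sigma$ coming from interpolation near the endpoint exponent). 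So the skeleton of your plan is the standard one, but the two steps you wave at --- dual regularity on a Lipschitz domain and the single (not doubled) logarithmic loss --- are precisely where the actual proof departs from the smooth-domain template, and as proposed they would fail or give a weaker exponent.
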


The aim of numerical homogenization, or numerical treatment for elliptic equations with rough media to a larger extent, which has a long history, is to develop efficient solvers with two key properties:
\begin{enumerate}
	\item[1.] the discretization is independent of $\vep$;
	\item[2.] the numerical solutions capture the correct limiting
	solutions on the discrete level.
\end{enumerate}
Many methods have been developed for elliptic equations,
including the multiscale Finite Element Method
(MsFEM)~\cite{EHW:00,HW97,HWC99}, the heterogeneous multiscale method
(HMM)~\cite{EMingZhang:2005,EEngquist:03,ABDULLE_HM}, the reduced basis type method~\cite{ABDULLE_RB,abdulle_rb_nonlinear}, local orthogonal
decomposition~\cite{MP14}, subspace decomposition methods~\cite{Kornhuber16,Kornhuber18}, local basis
construction methods~\cite{babuska_partition_1997,BL11,OZ14,Owhadi_bayesian}, and the global-local approach~\cite{OdenVemaganti:2000, BabuskaLiptonStuebner:2008, HuangLuMing:18}, to name just a few. Many of them have been extended to treat a large class of other equations as well ~\cite{peterseim2017eliminating,Aarnes06,engquist2005heterogeneous}. The focus for these methods are slightly different. For example, MsFEM intends to capture the fine scale oscillation while HMM mainly targets at finding the solution to the effective equation. The comparison of these methods is tangential to the goal of the current work. Interested readers are referred to review papers and books~\cite{EEngquist:03,efendiev2009multiscale,peterseimnumerical}.

\section{General solution framework based on domain decomposition and random sampling}
\label{sec:framework}

The asymptotic preserving and numerical homogenization schemes
reviewed in the previous section are two efficient schemes for solving
multiscale problems with highly oscillatory solutions.  Although these
schemes tackle different problems in different ways, both schemes
achieve efficiency by exploiting the fact that the solutions are close
to their asymptotic limits, which lie in a low-dimensional subspace.
The design of these schemes relies heavily on a sophisticated
understanding of the equation and its asymptotic limit. For many PDEs,
this level of understanding is not
available~\cite{chen2019low}. Our goal of
this work is to propose a general numerical framework that can be
applied to various multiscale problems, capturing the efficient
representation of the solutions without an explicit reliance on the
analytical understanding.

A first step in developing our framework is to relate the AP and
numerical homogenization schemes to the numerical linear algebra
concept of {\em low rank}. When the matrix operator in a linear
algebra problem has low rank, the solution lies in a subspace of low
dimension; there are efficient numerical schemes, based on random
sampling, that exploit this property. Drawing on these ideas from
linear algebra, we propose a method under the domain decomposition framework, that utilizes random sampling to search representative modes in the solution space.

\subsection{Numerical rank}\label{sec:numerical_rank}

In this section, we tackle the questions of low-rankness of a PDE
operator and low dimensionality of the solution space in a general
setting, and estimate the rank and dimension for several problems of
interest. In this way, we aim to unify the AP and numerical
homogenization schemes, and develop numerical schemes for more general
multiscale problems.

We consider a bounded linear operator $\mathcal{A}$:
\begin{equation}\label{eqn:Acal}
\begin{aligned}
\mathcal{A}: & \quad \mathcal{X} &\rightarrow &\quad \mathcal{Y} \\
& \quad f 				&\mapsto & \quad u
\end{aligned}
\end{equation}
that maps $f\in\mathcal{X}$ to a Hilbert space $\mathcal{Y}$.  In the
PDE setting, $\mathcal{A}$ maps the boundary conditions and/or source
term to the solution of the problem. We define the following
neighborhood of $\mathcal{A}$ that is parametrized by a positive
scalar $\tau$:
\begin{equation*}
S_\tau := \{ \tilde{\mathcal{A}}\in
\mathcal{L}(\mathcal{X},\mathcal{Y}):
\|\mathcal{A}-\tilde{\mathcal{A}}
\|_{\mathcal{X}\rightarrow\mathcal{Y}} \leq \tau \}\,.
\end{equation*}
The set $S_{\tau}$ is the collection of all operators whose operator
norm is within distance $\tau$ of $\mathcal{A}$.
%% The numerical rank is
%% defined by the minimum rank in $S_\tau$.
When the context is clear, we suppress the subscript in the operator
norm $\|\cdot\|_{\mathcal{X}\rightarrow\mathcal{Y}} $.

\begin{definition}[Numerical rank]\label{def:num_rank}
	The numerical $\tau$-rank of $\mathcal{A}$ is the rank of the
	lowest-rank operator in $S_\tau$, that is,
	\[
	k_\tau(\mathcal{A}) := \dim \ran \mathcal{A}_\tau;
	\quad \mathcal{A}_\tau := \arg \min\{\dim \ran \tilde{\mathcal{A}}: \tilde{\mathcal{A}}\in S_\tau  \}\,.
	\]
	That is, $\mathcal{A}_\tau$ is the operator within distance $\tau$ of
	$\mathcal{A}$ whose range space has the smallest dimension, and
	$k_\tau(\mathcal{A})$ is this dimension. We set $k_\tau(\mathcal{A})$
	to $\infty$ if all $\tilde{\mathcal{A}} \in S_{\tau}$ have range
	spaces of infinite dimension.
\end{definition}

The definition of numerical rank is closely related to Kolmogorov
$N$-width, which we define here.
\begin{definition}[Kolmogorov $N$-width]\label{def:Kol_N}
	Given the linear operator in \eqref{eqn:Acal}, the Kolmogorov
	$N$-width $d_N(\mathcal{A})$ is the shortest distance to an
	$N$-dimensional space, that is,
	\[
	d_N(\mathcal{A}) := \min_{S:\dim S = N} d(\mathcal{A},S)=\min_{S:\dim S = N} \sup_f \min_{v \in S} \frac{\| \mathcal{A}f - v\|_\mathcal{Y} }{\| f \|_{\mathcal{X}}} \,.
	\]
\end{definition}
~\cref{def:num_rank} and \cref{def:Kol_N} are connected
through the following proposition:
\begin{proposition}
	For the operator $\mathcal{A}$ specified in \eqref{eqn:Acal}, the
	following are true.
	\begin{enumerate}[(a)]
		\item If the numerical $\tau$-rank is $N$, then $d_N(\mathcal{A})\leq
		\tau$.
		\item If $d_{N}(\mathcal{A})\leq \tau < d_{N-1}(\mathcal{A})$, then
		the numerical $\tau$-rank is $N$.
	\end{enumerate}
\end{proposition}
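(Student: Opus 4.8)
The plan is to unwind the definitions of numerical $\tau$-rank and Kolmogorov $N$-width and observe that they are essentially dual descriptions of the same quantity — the singular-value profile of $\mathcal{A}$ — so the proposition should follow by chasing inequalities between "best rank-$N$ approximation error" and "best $N$-dimensional approximating subspace." Concretely, the first observation I would make is that these two notions coincide in the classical sense: for a compact operator $\mathcal{A}$ with singular values $s_1 \geq s_2 \geq \cdots$, the Kolmogorov $N$-width $d_N(\mathcal{A})$ equals $s_{N+1}$ (the subspace $S$ realizing the infimum is the span of the first $N$ left singular vectors), and the operator norm distance from $\mathcal{A}$ to the nearest operator of rank $\leq N$ is also $s_{N+1}$ (Eckart–Young / Schmidt). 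Granting this, the numerical $\tau$-rank $k_\tau(\mathcal{A})$ is exactly $\min\{N : s_{N+1} \leq \tau\}$, and both parts become arithmetic statements about where $\tau$ falls in the list $s_1 \geq s_2 \geq \cdots$.

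For part (a): if the numerical $\tau$-rank is $N$, then by definition there is $\tilde{\mathcal{A}} \in S_\tau$ with $\dim \ran \tilde{\mathcal{A}} = N$. Take $S := \ran \tilde{\mathcal{A}}$, an $N$-dimensional subspace of $\mathcal{Y}$. Then for any $f$, choosing $v = \tilde{\mathcal{A}}f \in S$ gives $\|\mathcal{A}f - v\|_\mathcal{Y} = \|(\mathcal{A} - \tilde{\mathcal{A}})f\|_\mathcal{Y} \leq \|\mathcal{A} - \tilde{\mathcal{A}}\|\,\|f\|_\mathcal{X} \leq \tau \|f\|_\mathcal{X}$, so $d(\mathcal{A}, S) \leq \tau$, hence $d_N(\mathcal{A}) \leq \tau$. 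This direction is elementary and needs no compactness; the range of a near-by operator is a witness subspace.

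For part (b): suppose $d_N(\mathcal{A}) \leq \tau < d_{N-1}(\mathcal{A})$. The inequality $d_N(\mathcal{A}) \leq \tau$ gives an $N$-dimensional subspace $S$ with $\sup_f \min_{v\in S} \|\mathcal{A}f - v\|/\|f\| \leq \tau$; composing $\mathcal{A}$ with the orthogonal projection $P_S$ onto $S$ produces $\tilde{\mathcal{A}} := P_S \mathcal{A}$ with $\ran \tilde{\mathcal{A}} \subseteq S$ and $\|\mathcal{A} - \tilde{\mathcal{A}}\| = \sup_f \|\mathcal{A}f - P_S\mathcal{A}f\|/\|f\| = \sup_f \min_{v\in S}\|\mathcal{A}f - v\|/\|f\| \leq \tau$ (using that $P_S\mathcal{A}f$ is the best approximation to $\mathcal{A}f$ in the Hilbert space $\mathcal{Y}$), so $\tilde{\mathcal{A}} \in S_\tau$ and $k_\tau(\mathcal{A}) \leq N$. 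For the reverse inequality $k_\tau(\mathcal{A}) \geq N$, suppose for contradiction some $\tilde{\mathcal{A}} \in S_\tau$ has $\dim \ran \tilde{\mathcal{A}} \leq N-1$; then by the argument of part (a), $d_{N-1}(\mathcal{A}) \leq \|\mathcal{A} - \tilde{\mathcal{A}}\| \leq \tau$, contradicting $\tau < d_{N-1}(\mathcal{A})$. Hence $k_\tau(\mathcal{A}) = N$.

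The main obstacle I anticipate is the use of the orthogonal projection $P_S$ in part (b): this step relies on $\mathcal{Y}$ being a Hilbert space so that best approximation in a finite-dimensional subspace is realized by an orthogonal projection of operator norm one, and it implicitly assumes the supremum defining the $N$-width is attained or at least well-approximated — one should be slightly careful about infima versus minima and whether $S_\tau$ is closed enough for $\mathcal{A}_\tau$ to exist, but since $P_S\mathcal{A}$ is an explicit element of $S_\tau$ with range in an $N$-dimensional space, the argument only needs the \emph{existence} of a rank-$\leq N$ operator within distance $\tau$, not that the minimizer $\mathcal{A}_\tau$ is this particular one. A secondary subtlety is handling the degenerate cases ($N = 0$, or $d_{N-1}(\mathcal{A}) = \infty$, or infinite numerical rank), which I would dispatch by the convention already fixed in \cref{def:num_rank}.
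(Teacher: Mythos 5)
Your proof is correct and follows essentially the same route as the paper: in (a) the range of the nearby rank-$N$ operator serves as the witness subspace, and in (b) the projection $\mathbf{P}_S\mathcal{A}$ gives $k_\tau(\mathcal{A})\leq N$ while a part-(a)-style bound rules out rank $\leq N-1$. The only difference is cosmetic — you phrase the lower bound in (b) as a contradiction applied directly to a hypothetical low-rank operator, whereas the paper argues through $\mathbf{P}_S\mathcal{A}$ for an arbitrary $(N-1)$-dimensional subspace — and your remarks on the Hilbert-space projection and on min versus inf match the level of care in the paper's own argument.
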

\begin{proof}
	We use $\mathbf{P}_S$ to denote the projection operator onto a finite
	dimensional subspace $S$. Note that the Kolmogorov $N$-width is a
	non-increasing function of $N$.
	
	For (a), let ${\mathcal{A}}_\tau\in S_\tau$ be the operator that
	achieves the numerical $\tau$-rank of $N$, and denote by $S$ the range
	of ${\mathcal{A}}_\tau$. We then have
	\begin{equation*}
	\tau \geq \| \mathcal{A} - {\mathcal{A}}_\tau\| = \sup_f \frac{\| \mathcal{A}f - {\mathcal{A}}_\tau f \|_\mathcal{Y}}{\| f\|_\mathcal{X}} \geq  \sup_f \min_{v\in S}\frac{\| \mathcal{A}f - v \|_\mathcal{Y}}{\| f \|_\mathcal{X}} \geq d_N(\mathcal{A})\,,
	\end{equation*}
	where the last inequality is from ~\cref{def:Kol_N}.
	% This completes the proof of (a).
	
	For (b), suppose that $d_N\leq\tau< d_{N-1}(\mathcal{A})$.  First, for
	an arbitrary $(N-1)$-dimensional subspace $S$, we have
	\[
	\tau < d_{N-1}(\mathcal{A}) \leq \sup_f \min_{v\in S}\frac{\| \mathcal{A}f - v \|_\mathcal{Y}}{\| f \|_\mathcal{X}}\leq \sup_f \frac{\| \mathcal{A}f - \mathbf{P}_S\mathcal{A}f \|_\mathcal{Y}}{\| f \|_\mathcal{X}}= \|\mathcal{A} -\mathbf{P}_S\mathcal{A}\|\,,
	\]
	then according to ~\cref{def:num_rank}, there is no
	$(N-1)$-dimensional operator that achieves $\tau$ accuracy, so we must
	have $k_\tau(\mathcal{A})\geq N$. Second, since $d_N(\mathcal{A})\leq
	\tau$, then there exists a $N$-dimensional subspace $S$ such that
	\[
	d_N(\mathcal{A}) = \sup_f \min_{v\in S}\frac{\| \mathcal{A}f - v \|_\mathcal{Y}}{\| f \|_\mathcal{X}} = \sup_f \frac{\| \mathcal{A}f - \mathbf{P}_S\mathcal{A}f \|_\mathcal{Y}}{\| f \|_\mathcal{X}}=\| \mathcal{A} -\mathbf{P}_S\mathcal{A} \| \leq \tau\,.
	\]
	Defining $\mathcal{A}_\tau = \mathbf{P}_S\mathcal{A}$, we see that the
	numerical $\tau$-rank is $N$.
\end{proof}

% We stress that the definition for numerical rank and the Kolmogorov
% $N$-width are seemingly trivial, but they barely entered numerical
% PDEs. \jl{how about the work Babuska-Lipton?}

The numerical rank and the Kolmogorov $N$-width both depend on
\emph{optimal approximations}, which typically require basis set
construction that is adaptive to the given problem.
%, such as coefficient fields. \sw{Not sure I understand the latter phrase. Is
%  ``coefficient fields'' a kind of basis set construction technique?
%  If so, it makes sense.}\ql{i suggest to remove ``coefficient fields".}
The pre-defined basis sets conventionally
used in numerical discretization, such as local polynomials and global
Fourier functions (as used in finite difference/element methods and
spectral methods), are not optimal, except in very special cases (heat equation, for example). In
fact, there are counterexamples that show them to be arbitrarily bad;
see \cite{Tadmor_spectral_CL} for the spectral method and
\cite{babuska_bad} for finite elements.

It is important to distinguish between numerical rank and degrees of
freedom (DOF). The DOF is the number of variables needed to represent
the solutions (to a certain specified error tolerance), when the basis
functions are given. Each numerical method utilizes a certain set of
pre-specified basis functions, and the DOF changes according to the
method used. The numerical rank, however, depends on the {\em optimal}
representation, so is the minimum DOF across all possible methods.
We study two examples and give rough computation of DOF using standard
finite element methods, thus yielding upper bound of the respective
numerical ranks. 
Numerical rank, as a concept, was explicitly explored in several papers on numerical homogenization, including \cite{BL11,MP14,calo2016randomized,Grasedyck12}. In \cite{BL11}, it was proved that the singular values of a confinement map decay almost exponentially. This concept, however, was not as developed in other sub-areas of multiscale computation. We  compare numerical rank and DOF explicitly below.

\subsubsection{Numerical rank of the radiative transfer equation}\label{sec:rank_Boltzmann}

To estimate the numerical rank of the solution
  operator $\mathcal{A}$ for the radiative transfer equation
  \eqref{eqn:rte_general}, \eqref{eqn:def.LL} and its diffusion limit,
  we consider the following cases. We assume in this section that the
  boundary condition $\phi$ in \eqref{eqn:transportBdy} satisfies $\phi\in W^{2,\infty}$, so that the solution
  $u$ and $u^\vep$ attain the same regularity~\cite{egger14}. The
  boundary-to-solution map $\mathcal{A}$ thus maps $W^{2,\infty}$ to
  $W^{2,\infty}$. For simplicity, we study the numerical rank of
  $\mathcal{A}$ associated with $L^2$ norm.
\begin{enumerate}[a)]
	\item Let $\vep = 1$ in~\eqref{eqn:rte_general}. If we use the
upwind method for $\partial_x$ and the trapezoidal rule for $\LL$, the
method is first-order in $x$ and second-order in $v$. By equating the
numerical error estimate to the accuracy required, we have
	\begin{equation*} \mathcal{O}(N_x^{-1}+N_v^{-2}) = \tau
\quad\Rightarrow\quad N_x = \mathcal{O}(1/{\tau})\,, \quad N_v =
\mathcal{O}(1/\sqrt{\tau})\,.
	\end{equation*} For $\tau$-accuracy, we thus obtain the
following DOF:
	\begin{equation*} N_{\vep=1} = N_xN_v =
\mathcal{O}(1/\tau^{3/2})\,.
	\end{equation*}
	\item Suppose that $\vep$ is extremely small
in~\eqref{eqn:rte_general} and we use the same method as shown
above. Then, defining $C_\vep = \|\partial^2_x
u^\vep\|_\infty=\mathcal{O}(\frac{1}{\vep^2})$, we have that
	\begin{equation*} \mathcal{O}(C_\vep N_x^{-1}+N_v^{-2}) = \tau
\quad\Rightarrow\quad N_x = \mathcal{O}(C_\vep/{\tau})=
\mathcal{O}\left( \frac{1}{\tau\vep^2} \right)\,,N_v =
\mathcal{O}(1/\sqrt{\tau})\,.
	\end{equation*} Note that $C_\vep$ blows up for small $\vep$,
since $u^\vep$ has sharp transitions. For $\tau$-accuracy, the DOF is
	\begin{equation}\label{eqn:rte_rank_vep} N_{\vep} = N_xN_v =
\mathcal{O}\left(\frac{1}{\vep^2\tau^{3/2}} \right)\,.
	\end{equation}
	\item If hat functions are used to construct the finite
element basis for the limiting Poisson equation~\eqref{eqn:diff}, the
method is second-order convergent in $x$, and we obtain
	\begin{equation*} \mathcal{O}(N_x^{-2}) = \tau
\quad\Rightarrow\quad N_x = \mathcal{O}\left(1/\sqrt{\tau} \right)\,.
	\end{equation*} The DOF in this case is thus:
	\begin{equation*} N_\ast = N_x = \mathcal{O}(1/\sqrt{\tau})\,.
	\end{equation*}
	\item If we make use of the diffusion limit, the triangle
inequality yields
	\begin{equation*} \|u^\vep-U\| \leq \|u^\vep-u^\ast\| +
\|u^\ast-U\|\leq \mathcal{O}(\vep) + \mathcal{O}(N_x^{-2})\,,
	\end{equation*} with $U$ being the numerical solution to
$u^\ast$. By comparing with the tolerance $\tau$ and taking the zero
limit of $\vep$, we obtain for the DOF that
	\begin{equation}\label{eqn:rte_rank_hom} N^\text{ap}_\vep =
\mathcal{O}\left(\frac{1}{\sqrt{|\tau-\vep|}}\right) = \mathcal{O} (
1/\sqrt{\tau})\,\quad\text{as}\quad\vep\to 0\,.%\leq
	\end{equation} This is the approximation used by the AP
method, hence our notation $N^{\text{ap}}_{\vep}$.
\end{enumerate}

We see by comparing \eqref{eqn:rte_rank_hom} and
\eqref{eqn:rte_rank_vep} that different schemes produce vastly
different DOF. Numerical rank of $\mathcal{A}$, bounded by the
smallest DOF, is thus controlled by $N^\text{ap}_\vep$.  The
homogenization scheme gives a much sharper bound on numerical rank
than the brute-force finite difference method.

\subsubsection{Numerical rank of elliptic equation with oscillatory
coefficients}\label{sec:rank_elliptic}

A similar analysis to the previous subsection can be conducted for the
diffusion equation \eqref{eqn:diff_ep} with rough
media. Again, we assume $H^{3/2}$ regularity for the
  boundary condition $g$, so that the solution $u$ and $u^\vep$ gain
  $H^2$ regularity. We thus consider the solution operator
  $\mathcal{A}$ to be a mapping from $H^{3/2}$ to $H^2$, and study the
  numerical rank of $\mathcal{A}$ associated with $L^2$ norm.
  
\begin{enumerate}[a)]
\item Let $\vep = 1$ in~\eqref{eqn:diff_ep}. If one uses the
  classical finite element method with piecewise hat functions as basis
  functions for $\nabla_x\cdot(a(x,x/\vep)\nabla_x)$, the method is
  second-order convergent. By equating the numerical error to the
  required accuracy $\tau$, we obtain
  \begin{equation*} \mathcal{O}(N_x^{-2}) = \tau
    \quad\Rightarrow\quad N_x = \mathcal{O}(1/\sqrt{\tau})\,,
  \end{equation*} so that the DOF within $\tau$-accuracy is $
N_{\vep=1} = N_x = \mathcal{O}(1/\tau^{1/2}) $.
\item Suppose that $0<\vep \ll 1$ in~\eqref{eqn:diff_ep}. If
  we use the classical finite element method with hat functions, as
  above, the discretization needs to resolve the oscillations, leading
  to the estimate 
    \begin{equation*} \mathcal{O}\left(\frac{1}{\vep^2}
        N_x^{-2}\right) = \tau \quad\Rightarrow\quad N_x =
      \mathcal{O}\left(\frac{1}{\vep\sqrt{\tau}} \right)\,,
    \end{equation*} where the factor ${1}/{\vep^2}$ arises from
    Theorem 4.4 in~\cite{hou1999convergence}. We thus have
        % For small $\vep$, solution oscillates wildly, grid points
        % for the resolution:
    \begin{equation}\label{eqn:elliptic_rank_vep} N_{\vep} = N_x =
      \mathcal{O}\left(\frac{1}{\vep\sqrt{\tau}}\right)\,.
    \end{equation} 
\item If the finite element method with hat-function basis is
  applied to the limiting effective equation with smooth
  media~\eqref{eqn:diff_limit}, the solution is smooth and the
  derivative is order one. Since the method is second-order, we obtain
  \begin{equation*} \mathcal{O}(N_x^{-2}) = \tau
    \quad\Rightarrow\quad N_x = \mathcal{O}(1/\sqrt{\tau})\,,
  \end{equation*} which leads to a DOF of $N_\ast = N_x =
  \mathcal{O}(1/\sqrt{\tau})$.
\item The homogenization route and the triangle inequality
  leads to
  \begin{equation*} \|u^\vep-U\| \leq \|u^\vep-u^\ast\| +
    \|u^\ast-U\|\leq \mathcal{O}(\vep) + \mathcal{O}(N_x^{-2})\leq \tau\,,
  \end{equation*}
  so that
  \begin{equation}\label{eqn:elliptic_rank_hom}
    N_\vep^\text{hom}=N_x \geq
    \mathcal{O}\left(\frac{1}{\sqrt{|\tau-\vep|}}\right)\to\mathcal{O}(1/\sqrt{\tau})\quad
    \text{as}\quad \vep\to 0\,.
  \end{equation}
\end{enumerate} By comparing~\eqref{eqn:elliptic_rank_vep}
and~\eqref{eqn:elliptic_rank_hom}, we see that the DOF obtained from
homogenization gives a much sharper bound on the numerical
rank. Moreover, the numerical rank is finite, even in the zero limit
of $\vep$.

%The discussions above show that the DOF depends on both the
%approximate solution space and the choice of basis functions, and that
%numerical rank is the quantity that truly reflects the computational
%cost required by an efficient algorithm. 

The discussions above show that the DOF depends on
  both the approximate solution space and the choice of basis
  functions, while numerical rank, by contrast, reflects the size of
  the basis required to approximate the solution up to a certain given
  accuracy. Heuristically, it also implies that the singular values of
  stiffness matrix decay rapidly, while the size of this matrix
  explodes as $\vep \to 0$. When DOF is significantly higher than the
  numerical rank, fast matrix-vector multiplication methods, which may
  exploit the sparsity of the stiffness matrix, may accelerate the
  computation.  However, this topic is beyond the focus of this
  paper. We take the alternative route here of identifying
  lower-dimensional spaces that approximate the solution space well
  and economically, using techniques that are motivated by randomized
  algorithms in numerical linear algebra.
% If DOF for a particular method is significantly larger than
% numerical rank, the basis functions for that method are a poor
% approximation to the solution space, or else the basis is
% excessively large given the dimension of the solution space. The key
% to efficiency is to find finite dimensional spaces that approximate
% the solution space well and economically.  Our work provides a
% general methodology for this task, motivated largely by randomized
% algorithms in numerical linear algebra.

\begin{remark}
  The discussion above has been justified rigorously in \cite{BL11}
  for elliptic equation with rough media. This paper shows the optimal
  local basis functions are indeed the singular vectors of a
  restriction operator $P$, and that the Kolmogorov $N$-width of $P$
  is exponentially decaying, that is,
  \[
    d_N(P) \lessapprox e^{-n^{1/(d+1)}},
  \]
  where $d$ is dimension of physical space. Therefore, the numerical
  $\tau$-rank of $P$ is small and the optimal representation of
  solution of elliptic equation has small DOF. The work~\cite{BL11}
  constructed optimal basis via an eigenvalue problem, whereas our work
  proposes to use a randomized algorithm.
\end{remark}

\subsection{Random sampling in numerical linear
	algebra}\label{sec:rsvd} Random sampling algorithms have a long
history in numerical linear algebra \cite{Hutchinson:90, Goreinov:97,
	Frieze:98, Stewart:99, Mahoney:11, Tropp_rSVD, KannanVempala:17}; we
will focus here on those related to low-rank approximations of a
matrix.  Given a matrix
$\Amat\in\mathbb{R}^{m\times n}$ that is known to be approximately low
rank, a standard way to obtain the most important modes in its range
is via the singular value decomposition (SVD). Without loss of generality, we assume $m\geq n$ and  
write the singular value decomposition as
\begin{equation}
\label{eqn:svd}
\Amat = \Umat\Sigma \Vmat^\top = \sum_{i=1}^n\sigma_iu_iv^\top_i\,,
\end{equation}
where $\Umat=\left[u_1\,,u_2\,,\dotsc,u_n\right]\in
\mathbb{R}^{m\times n}$ contains the left singular vectors,
$\Vmat=\left[v_1\,,v_2\,,\dotsc,v_n \right]\in \mathbb{R}^{n\times n}$
contains the right singular vectors and $\Sigma = \diag
(\sigma_1,\sigma_2,\dotsc,\sigma_n)$ contains the singular values in
descending order: $\sigma_1\geq\sigma_2\geq\dotsc\sigma_n\geq
0$. $\Umat$ and $\Vmat$ are orthogonal matrices. It is well known that
the best $k$-rank approximation to $\Amat$ (in spectral norm) is given
by thresholding the singular value decomposition at $k$-th order,
termed $\Amat_k$ here:
\begin{equation*}
\Amat_k = \Umat_k\Sigma_k V_k=\sum_{i=1}^k\sigma_iu_iv^\top_i\,,
\end{equation*}
where $\Umat_k$ and $\Vmat_k$ contain the first $k$ columns in $\Umat$
and $\Vmat$. We say the matrix is approximately rank-$k$ if
$\|\Amat-\Amat_k\|=\sigma_{k+1}\ll \sigma_1$. In this case, we have
\begin{equation*}
\|\Amat-\Amat_k\| = \|\Amat - \Umat_k\Umat_k^\top\Amat\| = \sigma_{k+1}\ll\sigma_1 = \|\Amat\|\,.
\end{equation*}
In terms of the discussion in the previous subsection, the range space
of $\Amat$ is approximately the same as the range space of $\Amat_k$,
which equals the span of the columns of $\Umat_k$, which is the
subspace we seek. Computation of the SVD \eqref{eqn:svd} is a
classical problem in numerical linear algebra, requiring
$\mathcal{O}(mn^2)$ operations.

%% The classical method of getting SVD of a matrix amounts to
%% rewriting it as an eigenvalue decomposition problem. To acquire the
%% most important $k$ modes $\Umat_k$ in the range, this algorithm
%% requires the full knowledge of $\Amat$ and the decomposition can be
%% done with complexity $\mathcal{O}(n^3)$.

Randomized SVD efficiently computes the low-rank approximation of a
given matrix by means of random sampling of its column space.
%% Thus it
%% is tightly related to our goal of finding a good approximate solution
%% space. Due to its high efficiency of randomized SVD, the algorithm has
%% been extensively used on a large number of applications.
The particular version of the algorithm we describe here was developed
in~\cite{Liberty:07, Woolfe:08}; see \cite{Tropp_rSVD} for a review.

%% The idea behind the algorithm is simple: if an $n\times n$ matrix
%% $\Amat$ is of low rank, geometrically it maps an $n$-dimensional
%% sphere to an ellipsoid that is ``skinny:'' the foci in some
%% directions are much larger than the rest. With high probability, a
%% randomly sampled vector on the $n$-dimensional sphere is mapped by
%% $\Amat$ to a vector that is mostly parallel to the space that gets
%% extended the most. With $k$ of them, the range, termed $\Ymat$, is
%% approximately $\Umat_k$.

The idea behind the algorithm is simple: if an $m\times n$ matrix
$\Amat$ is of approximate low rank $k$, the matrix maps an
$n$-dimensional sphere to an $m$-dimensional ellipsoid that is ``skinny:'' $k$ of its
axes are significantly larger than the rest. With high probability,
vectors that are randomly sampled vector on the $n$-dimensional sphere
are mapped by $\Amat$ to vectors that lie mostly in a $k$-dimensional
subspace of $\mathbb{R}^m$, which is the range of $\Amat$. An
approximation to $\Amat_k$ can be obtained by projecting onto this
subspace.

The precise statement of the randomized SVD algorithm and its error
estimates are recalled in the following theorem.
\begin{theorem}[Theorems~10.6 and 10.8 of \cite{Tropp_rSVD}]
	\label{thm:average_spectral}
	Let $\Amat$ be defined as in \eqref{eqn:svd} and let the target rank
	$k$ be at least $2$. Define
	\begin{equation}\label{eqn:define_Y}
	\Ymat = \Amat\Omega\,,
	\end{equation}
	where $\Omega=\left[\omega_1\,,\dotsc,\omega_{k+p}\right]$ is a matrix of size
	$n\times (k+p)$ with its entries randomly drawn from i.i.d.~normal
	distribution, where $p$ is an oversampling parameter. If $\Amat$ is
	approximately $k$-rank, then with large probability,
	$\Pmat_\Ymat(\Amat)$, the projection of $\Amat$ onto the space
	spanned by $\Ymat$, defined by
	\begin{equation*}
	\Pmat_\Ymat(\Amat) = \Ymat(\Ymat\Ymat^\top)^{-1}\Ymat^\top\Amat\,,
	\end{equation*}
	yields the following error bounds.
	\begin{enumerate}[a)]
		\item Average spectral error:
		\begin{equation*}
		\mathbb{E}\,\|\Amat - \Pmat_\Ymat(\Amat)\|\leq \biggl(1+\frac{k}{p-1}\biggr)\sigma_{k+1} + \frac{e\sqrt{k+p}}{p}\biggl(\sum_{j>k}\sigma_j^2\biggr)^{1/2}\ll\sigma_1\,.
		\end{equation*}
		\item Deviation bound:
		\begin{equation*}
		\|\Amat - \Pmat_\Ymat(\Amat)\|\leq\left[\biggl(1+t\sqrt{\frac{3k}{p+1}}\biggr)\sigma_{k+1}+t\frac{e\sqrt{k+p}}{p+1}\biggl( \sum_{j>k}\sigma_j^2 \biggr)^{1/2}\right] + ut\frac{e\sqrt{k+p}}{p+1}\sigma_{k+1}\ll\sigma_1,
		\end{equation*}
		with failure probability at most $2t^{-p} + e^{-u^2/2}$, for all
		$u,t>1$.
		% \sw{Is it true that we need $u>1$?} $t>1$, and $p\geq 4$. ~\kc{In the statement of Thm 10.8 it does say $u\geq 1$, this condition is not used in the proof though.}
	\end{enumerate}
\end{theorem}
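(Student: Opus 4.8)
The statement to be proved, Theorem~\ref{thm:average_spectral}, is a direct quotation of Theorems~10.6 and~10.8 of \cite{Tropp_rSVD}, so the plan is essentially to reconstruct the standard randomized-SVD analysis and then specialize. First I would recall the deterministic structure theorem (Halko--Martinsson--Tropp, Theorem~9.1 of \cite{Tropp_rSVD}): partition the right singular vectors as $\Vmat = [\Vmat_k \;\; \Vmat_\perp]$ into the top-$k$ block and the tail, set $\Omega_1 = \Vmat_k^\top \Omega$ and $\Omega_2 = \Vmat_\perp^\top \Omega$, and show that whenever $\Omega_1$ has full row rank,
\[
\|\Amat - \Pmat_\Ymat(\Amat)\|^2 \le \|\Sigma_\perp\|^2 + \|\Sigma_\perp \Omega_2 \Omega_1^\dagger\|^2 ,
\]
where $\Sigma_\perp = \diag(\sigma_{k+1},\sigma_{k+2},\dotsc)$ and $\Omega_1^\dagger$ is the Moore--Penrose pseudoinverse. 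This is a purely linear-algebraic identity/inequality: it follows by writing the residual of the orthogonal projector onto $\ran\Ymat$, using unitary invariance of the spectral norm to rotate by $\Umat$ and $\Vmat$, and bounding the off-diagonal block. No randomness enters yet.

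Next I would exploit that the entries of $\Omega$ are i.i.d.\ standard normal, so by rotational invariance of the Gaussian distribution, $\Omega_1 \in \mathbb{R}^{k\times(k+p)}$ and $\Omega_2 \in \mathbb{R}^{(n-k)\times(k+p)}$ are independent standard Gaussian matrices. The remaining work is to take expectations (part~a) or tail bounds (part~b) of $\|\Sigma_\perp \Omega_2 \Omega_1^\dagger\|$. For the expectation bound I would condition on $\Omega_1$, use $\mathbb{E}_{\Omega_2}\|\Sigma_\perp \Omega_2 \Omega_1^\dagger\| \le \|\Sigma_\perp\|\,\|\Omega_1^\dagger\|_F + \|\Sigma_\perp\|_F \,\|\Omega_1^\dagger\|$ (a consequence of the fact that for fixed matrices $B,C$, $\mathbb{E}\|B G C\| \le \|B\|\|C\|_F + \|B\|_F\|C\|$ for Gaussian $G$), and then invoke the known moment bounds for Gaussian pseudoinverses, namely $\mathbb{E}\|\Omega_1^\dagger\|_F^2 = k/(p-1)$ and $\mathbb{E}\|\Omega_1^\dagger\| \le e\sqrt{k+p}/p$. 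Combining with $\sqrt{a+b}\le\sqrt a + \sqrt b$ and identifying $\|\Sigma_\perp\| = \sigma_{k+1}$, $\|\Sigma_\perp\|_F = (\sum_{j>k}\sigma_j^2)^{1/2}$ gives part~(a). For part~(b) the same decomposition is used, but now $\|\Omega_1^\dagger\|_F$, $\|\Omega_1^\dagger\|$, and the Gaussian matrix $\Omega_2$ each contribute a concentration term: a $\chi^2$-type tail for $\|\Omega_1^\dagger\|_F$ giving the $t\sqrt{3k/(p+1)}$ factor, a tail for $\|\Omega_1^\dagger\|$ giving $t\,e\sqrt{k+p}/(p+1)$, and Lipschitz concentration of $G \mapsto \|BGC\|$ giving the $u$-dependent additive term $ut\,e\sqrt{k+p}/(p+1)\,\sigma_{k+1}$; a union bound over the two failure events yields the probability $2t^{-p} + e^{-u^2/2}$. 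Finally, the ``$\ll \sigma_1$'' annotations are just the qualitative reading of these bounds under the approximate-rank-$k$ hypothesis $\sigma_{k+1}\ll\sigma_1$ together with modest $p$.

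The main obstacle is not conceptual but the quantitative Gaussian-matrix estimates: the sharp constants $k/(p-1)$ and $e\sqrt{k+p}/p$ for the moments of $\|\Omega_1^\dagger\|_F$ and $\|\Omega_1^\dagger\|$, and the corresponding deviation inequalities, require the theory of Wishart matrices and concentration of measure on Gaussian space, which is exactly the content imported wholesale from \cite{Tropp_rSVD}. Since the theorem is explicitly stated as a restatement of results there, the honest ``proof'' is to cite \cite{Tropp_rSVD} for these ingredients and assemble them as above; I would present the deterministic structure step in full and then defer the Gaussian tail lemmas to the reference.
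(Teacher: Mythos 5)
Your proposal is correct and matches the paper's treatment: the paper offers no independent proof, stating the result verbatim as Theorems~10.6 and 10.8 of the cited Halko--Martinsson--Tropp review, which is exactly the citation you fall back on. Your sketch of the deterministic structure bound plus the Gaussian pseudoinverse moment and tail estimates is a faithful outline of the proof in that reference, so there is nothing to reconcile.
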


We emphasize two advantages of the algorithm: It captures the
approximate range within $n(k+p)$ operations ($p$ is fixed and small), and it does not require full
knowledge of $\Amat$, only the ability to evaluate the matrix-vector
product $\Amat\Omega$.

%In our approach, we will not form explicitly $A_{\text{approx}}$, the
%low-rank approximation to the matrix; rather, we will rely on the
%random sampling based on right-multiplication by random vectors to
%find the column space of the matrix, which will give us an efficient
%representation of the local solution space.

\subsection{General solution framework for multiscale
	problems} \label{sec:offon}

%% Since computational efficiency is largely determined by the DOF
%% involved in the scheme, which in turn determined by the optimality of
%% the scheme for constructing the basis,

Finding a low-rank representation of solution space is the key to
reducing complexity.
% As reviewed in the previous section, the random
% sampling technique for low rank approximation from numerical linear
% algebra quickly fetch the range of any given low-rank matrix.
In this section, we adapt the low-rank approximation scheme from
numerical linear algebra into a general methodology for solving
multiscale PDEs. The method requires limited knowledge on the specific
structure of the solution spaces, so the solvers are expected to be
applicable to a large class of multiscale problems. Our framework uses
domain decomposition to sketch the local solution space via randomized
sampling, in an offline step. This is followed by an online step, in
which the solution is patched together by imposing continuity
conditions across the domains.

We wish to solve the problem \eqref{eqn:general_u_ep}, that is,
\begin{equation}
\begin{cases}
(\mathcal{L}^\vep u^\vep)(x) = 0\,, \quad x\in \Kcal \,, \\
\mathcal{B}u(x) = \phi(x) \,, \quad x\in \Gamma \,,
\end{cases}
\end{equation}
where $\mathcal{B}$ is the boundary condition operator, $\Gamma$ the
boundary associated with domain $\Kcal$ and $f$ the boundary data.  We
adopt the domain decomposition approach, partitioning $\Kcal$ into $M$
non-overlapping subdomains, as follows:
\[
\Kcal = \bigcup_{m=1}^M \Kcal_m\,, \quad\text{with}\quad \Kcal_m^{\circ} \cap \Kcal_n^{\circ} = \emptyset \quad ( m\neq n)\,,
\]
where $\Kcal_m$ denotes the $m$-th local patch. Accordingly, we denote
by $\Gamma_m$ the boundary associated with $\Kcal_{m}$. Different types
of equations require various kinds of boundary conditions, as we will
make explicit in~\cref{sec:rte} and \cref{sec:elliptic}.
%% ~\kc{I add one more sentence here to
%%   emphasize the ``associated" boundary $\Gamma$ could be different for
%%   various equations.}
Each subdomain is further discretized with a conformal mesh. We denote $h$ as the largest meshsize and assume that it is fine enough such that $h\ll \vep$. The number of subdomains $M$ does not depend on $\vep$.

%% The spacing $h$
%% must satisfy the fine discretization property $h\ll\vep$.

Domain decomposition approach consists of two stages, as follows.
\begin{enumerate}[(1)]
	\item \textbf{Offline stage}: Prepare local solution space. Denote by
	$\Gmat_{m}$ the collection of local solutions in each local patch
	$\Kcal_m$, $m=1,2,\dotsc,M$, that is,
	\begin{equation*}
	\Gmat_m = \left[b_{m,1}\,,b_{m,2}\dotsc \right]\,,
	\end{equation*}
	where each local function $b_{m,n}$ is one solution to the equation on
	the subdomain $\Kcal_m$, that is,
	\begin{equation*}
	\mathcal{L}^\vep b_{m,n} = 0\,, \quad x\in \Kcal_m\,,
	\end{equation*}
	with boundary condition on $\Gamma_m$. These solutions are computed on
	fine grids with discretization $h$.
	\item \textbf{Online stage}: The global solution is written as
	\[
	u = \sum_{m=1}^M u_m = \sum_{m=1}^M \Gmat_{m}{c}_m,
	\]
	with $u_m$ being $u$ confined on $\Kcal_m$. $c_m$ is a vector of coefficients determined by the boundary conditions $\phi$ and conditions
	that enforce continuity across patches.
\end{enumerate}
%% \sw{You include a forcing term in the PDE at the start of this
%%   section; where is this accounted for in the numerical scheme?
%%   Should the right-hand sides be $f$ not $0$?}  ~\kc{We only deal
%%   the case where $f$ represents the boundary data and we assume no
%%   force term in this paper. See a comment above.}
The online stage is a standard step in domain decomposition. Its cost
is governed by the number of basis functions chosen in the offline
step. In the offline stage, there are many ways to construct the local
solution space $\Gmat_m$. Since this space contains all possible
local solutions, it can be regarded as a full library of all Green's
functions. One possible way to define $\Gmat_m$ is to define the
boundary conditions on the $m$th patch to be delta functions defined
over a grid on the boundary $\Gamma_m$, that is,
\begin{equation*}
\begin{cases}
\mathcal{L}^\vep b_{m,n} = 0\,, \quad x\in \Kcal_m \\
b_{m,n} = \delta_{m,n}\,,\quad  x\in \Gamma_m\,,
\end{cases}
\end{equation*}
where $\delta_{m,n}$ is the Kronecker delta function that takes the
value $1$ at the $n$-th grid point on $\Gamma_m$ and zero on the other
grid points on $\partial\Kcal_m$. Since $h\ll \vep$, the number of
functions $n_m$ in $\Gmat_m$ grows as $\vep$ shrinks. This strategy,
summarized in Algorithm~\textproc{DetLocalSolu}, is referred to as the
{\em full-basis approach}.

An alternative way to construct basis functions for each patch also
makes use of a grid defined on the boundary $\Gamma_m$, but
takes the boundary conditions for each function $b_{m,n}$ to be a set
of random values on the grid points, rather than a $\delta$ function.
Specifically, we have
\begin{equation*}
\begin{cases}
\mathcal{L}^\vep r_{m,n} = 0\,, \quad x\in \Kcal_m, \\ r_{m,n} =
\omega_{m,n}\,,\quad x\in \Gamma_m,
\end{cases}
\end{equation*}
where $\omega_{m,n}$ is defined to have a random value drawn
i.i.d. from a normal distribution at each grid point in
$\Gamma_m$.  Since the local solution space is homogenizable
and low rank, we expect that the number of basis functions $k_m$
required to represent it adequately will be much smaller than $n_m$
defined above, and independent of $\vep$. This strategy, which we
refer to as the {\em randomized reduced-basis approach}, is summarized in
Algorithm~\textproc{RandLocalSolu}. 
In practice, one could add a QR-decomposition at the end of algorithm~\textproc{RandLocalSolu} to return basis functions that are orthonormal. This would improve the condition number of the global online problems (for example, \eqref{eqn:rte_cond_reduced} and \eqref{eqn:elliptic_cond_reduced}).

Denote by $\Gmat_m^b$ the collection of full basis $\{b_{m,n}\}$ and
$\Gmat_m^r$ the collection of random reduced basis $\{r_{m,n}\}$, we have
the following relationship:
\begin{equation*}
\Gmat_m^r = \Gmat_m^b\Omega\,,
\end{equation*}
where $\Omega$ is a random i.i.d. matrix with entries $\omega_{m,n}$.

The complete scheme, which includes the two alternative
implementations of the offline stage described above, is specified as~\cref{alg:summary}.

\begin{algorithm}
	\caption{Multiscale solver for $\mathcal{L}^\vep
		u^\vep = 0$ over $\Kcal$ with $\mathcal{B}u = f$ on $\Gamma$}\label{alg:summary}
	\begin{algorithmic}[1]
		\State \textbf{Domain Decomposition}
		\Indent
		\State Partition domain into non-overlapping patches $\Kcal = \bigcup_{m=1}^M \Kcal_m$. 
		\State Form the ansatz $u = \sum_{m=1}^M u_m =\sum_{m=1}^{M}\Gmat_{m}\vec{c}_m$.
		\EndIndent
		\State \textbf{Offline Stage:}
		\Indent
		\State Call function $\Gmat_{m}$=\textproc{DetLocalSolu}($\Kcal_m$) or $\Gmat_{m}$=\textproc{RanLocalSolu}($\Kcal_m$).
		\EndIndent
		\State \textbf{Online Stage:}
		\Indent
		\State Use continuity condition and global boundary data $f$ to determine $\left[\vec{c}_1,\ldots,\vec{c}_M\right]$.
		\EndIndent
		\State \textbf{Return:} approximated global solution $\hat{u} = \sum_{i=1}^M \Gmat_m \vec{c}_m$.
	\end{algorithmic}
	\hrulefill
	\begin{algorithmic}[1]
		\Function{DetLocalSolu}{$\Kcal_m$}
		\State	Prepare full list of numerical delta functions $\delta_{m,i},i=1,\ldots,n_m$ on $\Gamma_m$.
		\State	Call function $u_{m,i}$=\textproc{LocalPDESolver}($\Kcal_m$,$\delta_{m,i}$) for $i=1,2,\dotsc,n_m$.
		\State  \textbf{Return:} Local solution space span $\Gmat_{m}=\left[u_{m,1},\ldots,u_{m,n_m} \right]$.
		\EndFunction
	\end{algorithmic}
	\hrulefill
	\begin{algorithmic}[1]
		\Function{RanLocalSolu}{$\Kcal_m$}
		\State	Prepare $k_m$ random i.i.d.~Gaussian vector $\omega_{m,i},i=1,\ldots,k_m$ on $\Gamma_m$.
		\State	Call function $u_{m,i}$=\textproc{LocalPDESolver}($\Kcal_m$,$\omega_{m,i}$) for $i=1,2,\dotsc,k_m$.
		\State	\textbf{Return:} Approximated local solution space span $\Gmat_{m}=\left[u_{m,1},\ldots,u_{m,k_m} \right]$.
		\EndFunction
	\end{algorithmic}
	\hrulefill
	\begin{algorithmic}[1]
		\Function{LocalPDESolver}{Local domain $\Kcal_m$, Boundary
			condition $\phi$} 
		\State Use standard Finite Element/Difference Methods to
		solve PDE $\mathcal{L}^\vep u^\vep_m = 0$ over $\Kcal_m$ with
		$u^\vep_m = \phi$ over $\Gamma_m$, for solution
		$u^\vep_m$. 
		\State \textbf{Return:} Local solution $u^\vep_m$.  \EndFunction
	\end{algorithmic}
\end{algorithm}

In practice, for \textproc{RandLocalSolu}, we often use a slightly
larger patch $\wt{\Kcal}_m\supset\supset \Kcal_m$ 
% \sw{Does the notation $\subset\subset$ mean that one set is contained in the interior of the other? I have not seen this notation before.} ~\kc{Yes.}
that augments $\Kcal_m$ by a buffer zone. The local solution is obtained
on $\wt{\Kcal}_m$, with random boundary conditions on its associated
boundary $\wt{\Gamma}_m$, and then restricted on $\Kcal_m$, as
follows:
\begin{equation*}
\begin{cases}
\mathcal{L}^\vep \tilde b_{m,n} = 0\,, \quad x\in \wt{\Kcal}_m \\
\tilde b_{m,n} = \omega_{m,n}\,,\quad  x\in \wt{\Gamma}_m\,.
\end{cases}
\end{equation*}
Use of the buffer zone helps to remove boundary layer effects and the
effect of the singularity at the boundary. This technique will be
discussed further for the particular PDEs considered in the next two
sections.

\begin{remark}
We emphasize that such connection between PDE and linear algebra has been observed by several previous works, including~\cite{Smetana_random,Owhadi_bayesian,OZ14}. Our proposed method especially coincides with that of~\cite{OZ14}, in which the author explicitly connects the random sampling in $H^{-1}$ (seen in the source) to the representative basis functions in $H^1$ (seen in the solution space). In our case the random sampling is done on the boundary condition, but the method shares the same spirit as reported in~\cite{OZ14}.
%states that for divergence form elliptic PDE with highly oscillatory coefficients with varying source term, the random sampling technique  because these randomized functions are weakly (e.g. in H?1 if used as right hand sides) aligned with the eigensubspace of the operator corresponding to the lowest eigenvalues. This weak alignment (in H?1) translates into a strong one (in H01(?)) for the corresponding solutions/basis functions (which explains the accuracy plots presented by the authors).
\end{remark}

\section{Example 1: Radiative transfer equation}
\label{sec:rte}

We now describe the application of our framework to the radiative
transfer equation with zero source, which is
\begin{equation}\label{eqn:RTE}
\mathcal{L}^\vep u^\vep=v\partial_x u^\varepsilon (x,v) -\frac{1}{\varepsilon} \LL [u^\varepsilon] = 0\,,\quad (x,v) \in \Kcal= \Omega\times\mathbb{V}=[0,1]\times [-1,1]\,,
\end{equation}
where
% $\Lcal^\vep = v\partial_x -\frac{1}{\vep}\LL$ with
the collision term $\LL$ is given by
\begin{align*}
\LL u(x,v) &= \int_{-1}^1 k(x,v,v')u(x,v') \rd v' - \int_{-1}^1 k(x,v',v)\rd{v'}u(x,v)\,.
\end{align*}
We use the Henyey-Greenstein model, in which the scattering
coefficient is defined by
\begin{equation} \label{eqn:HG}
k(x,v,v') = \frac{1}{2} \frac{1-g^2}{1+g^2 + 2g (vv')}\,,
\end{equation}
where $g\in (-1,1)$ is a specified constant. To impose boundary
conditions properly for radiative transfer equations, we denote by
$\Gamma_\pm$ the outgoing / incoming part of the boundary:
\begin{equation*}
\Gamma_\pm = \{(x,v): x\in\partial\Omega, \; \pm v\cdot n_x >0\}\,,
\end{equation*}
where $n_x$ is the exterior normal direction at $x\in\partial\Omega$.
In particular, for the problem \eqref{eqn:RTE} on the spatial domain
$\Omega = [0, 1]$,  we have
\begin{equation*}
\Gamma_- = \{(x=0,v>0)\}\cup\{(x=1,v<0)\}\,,\quad \Gamma_+ = \{(x=0,v<0)\}\cup\{(x=1,v>0)\}\,.
\end{equation*}
The equation \eqref{eqn:RTE} is well-posed if a Dirichlet boundary
condition is imposed on the incoming boundary, also known as the
incoming boundary condition: $u^\varepsilon|_{\Gamma_-} = \phi$.

To implement domain decomposition, we partition the domain as follows:
\begin{equation}\label{eq:domainK}
\Kcal = [0,1]\times [-1,1] = \bigcup_{m=1}^M \Kcal_m\,,\quad\text{with}\quad\Kcal_m = [x_{m-1}\,,x_{m}]\times [-1,1]\,,
\end{equation}
where $x_m = {m}/{M}$ forms a set of $(M+1)$ equi-spaced grid
points on $[0,1]$ and $\Kcal_m$ is the $m$-th patch of the domain.
%% The
%% Dirichlet boundary condition for local solution is imposed on each
%% patch, as follows:
The incoming / outgoing parts of the boundary of each patch are 
\begin{equation*}
\Gamma_{m,-} =
\{(x_{m-1},v>0)\}\cup\{(x_m,v<0)\}\,,\quad\text{and}\quad \Gamma_{m,+}
= \{(x_{m-1},v<0)\}\cup\{(x_m,v>0)\}\,.
\end{equation*}
We denote by $L_{m,m+1} = \Kcal_m \cap \Kcal_{m+1} = \{(x_{m},v): v
\in [-1, 1]\}$ the line segment that separates $\Kcal_m$ and
$\Kcal_{m+1}$. The geometry of the domain and the patches is plotted
in~\cref{fig:dd_stencils}.

\begin{figure}[hbt]
	\centering
	\includegraphics[scale = 0.7]{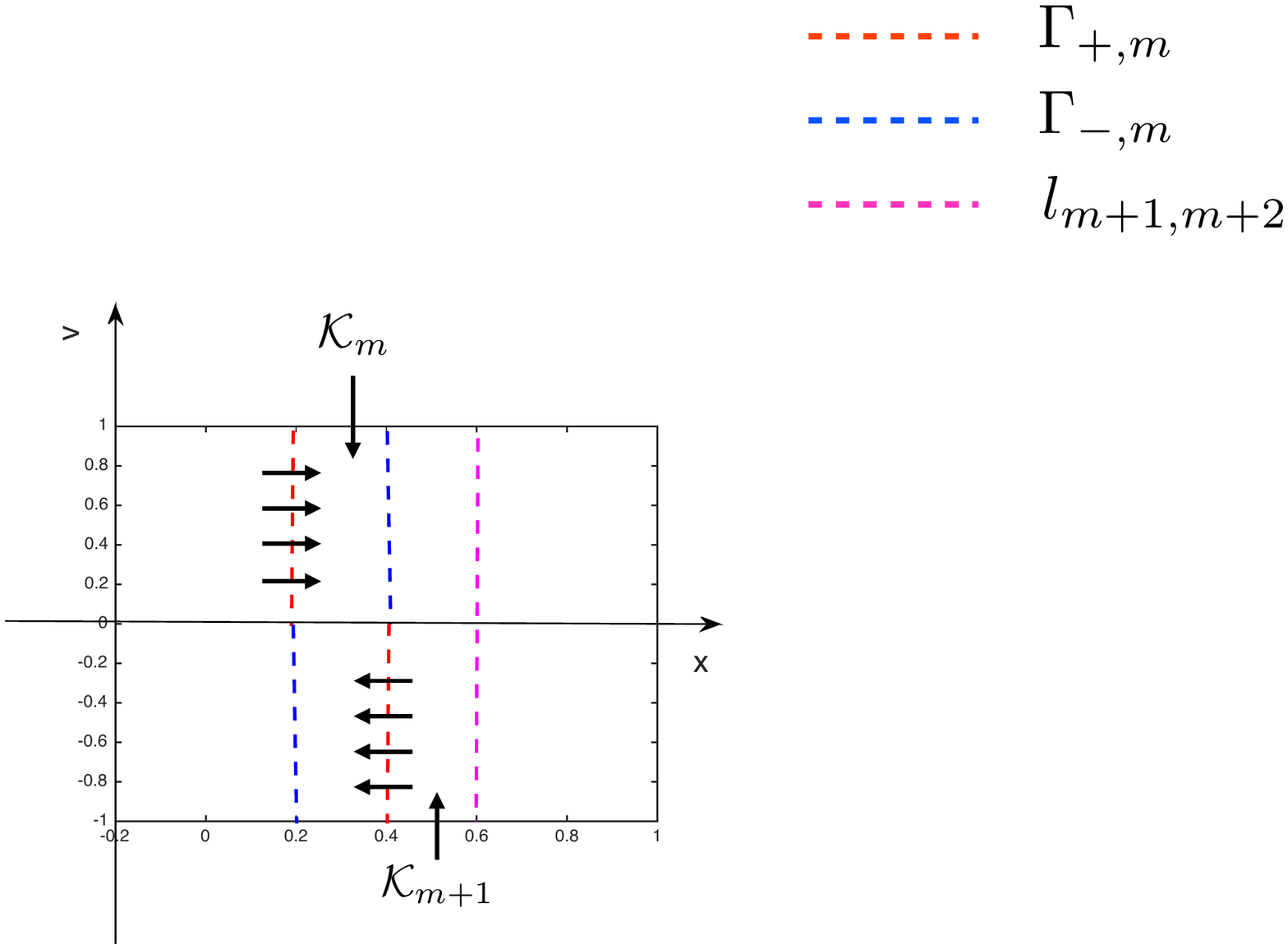}
	\includegraphics[scale = 0.25]{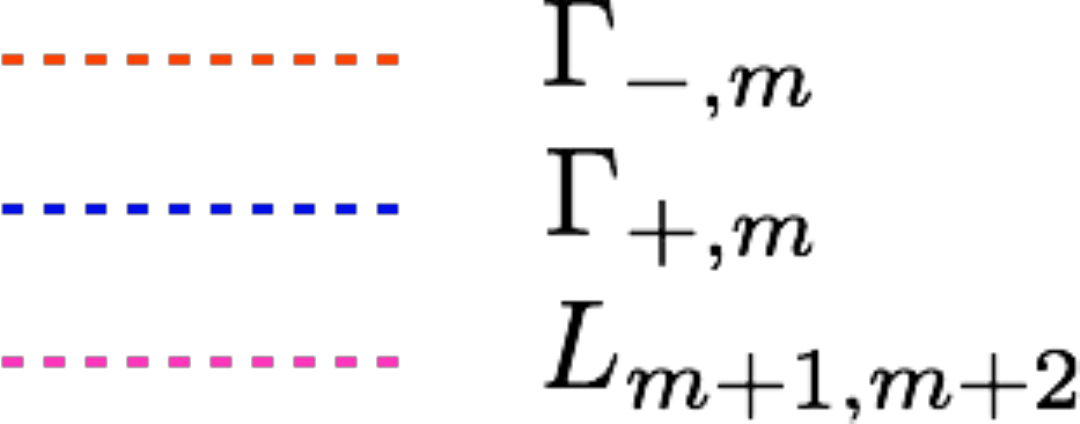}
	\caption{Domain decomposition for RTE and the boundaries of the local
		patch.}\label{fig:dd_stencils}
\end{figure}

As described in~\cref{sec:framework}, the domain decomposition
approach prepares the local solution space in the offline step and
patches together solutions via continuity and boundary conditions in
the online step. We describe the two options for constructing the
basis functions --- the full-basis approach and the randomized
reduced-basis approach --- in the following two subsections.

%% With the full basis approach one constructs the entire list of Green's
%% functions in the offline step, while with the reduced basis approach,
%% the offline solution space is prepared via random sketching.

%
%The solution space locally in $\Kcal_m$ is given by solutions to
%$\Lcal^\vep u^\vep=0$ on $\Kcal_m$ with all possible local boundary
%conditions given on $\Gamma_{m,-}$, where $\Gamma_{m,\pm}$ denotes the
%outgoing / incoming  part of the boundary of $\Kcal_m$ (see Figure~\cref{fig:dd_stencils}):
%\begin{equation*}
%\Gamma_{m,-} = \{(x_{m-1},v>0)\}\cup\{(x_m,v<0)\}\,,\quad\text{and}\quad \Gamma_{m,+} = \{(x_{m-1},v<0)\}\cup\{(x_m,v>0)\}\,.
%\end{equation*}
%We also denote
%$L_{m,m+1} = = \Kcal_m \cap \Kcal_{m+1} = \{(x_{m},v): v \in [-1,
%1]\}$ the line segment separating $\Kcal_m$ and $\Kcal_{m+1}$.
%
%Our domain decomposition approach consists of two steps: In the offline
%step, a basis set for the local solution space is constructed.  It is
%offline as it is independent of the given boundary condition $\phi$ of
%the problem. Then in the online step, we patch together solutions
%spanned by basis in the local solution space to form a global solution
%that satisfies the boundary condition.

\subsection{Full basis approach}

\subsubsection*{Offline step}
We prepare a full basis of the local solution space by enumerating all
possible boundary conditions, up to a discretization. Since the
problem \eqref{eqn:RTE} is linear, we can obtain each basis function
by solving a problem over a patch with a Dirichlet boundary condition
that is nonzero at only one grid point. Specifically, for the patch
$\Kcal_m$, each basis function $b_{m,i}$ is obtained by solving
\begin{equation}\label{eqn:rte_full}
\begin{cases}
v\partial_x b_{m,i} -\frac{1}{\varepsilon} \LL [b_{m,i}] = 0\,,\quad (x,v) \in \Kcal_m\,,\\
b_{m,i}|_{\Gamma_{m,-}} = \delta_{m,i},
\end{cases}
\end{equation}
where $\delta_{m,i}$ is a numerical delta function supported on a grid
point on $\Gamma_{m,-}$ and the index $i$ enumerates all grid points
on the incoming boundary. The full basis for the local solution space
is then given by
\begin{equation}\label{eq:GmatTrans}
\Gmat^\vep_m = \left[b_{m,1}\,,\dotsc,b_{m,n_m} \right]\,,
\end{equation}
where $\Gmat^\vep_m$ is a Green's matrix whose columns are the basis
functions $b_{m, i}$.  Here, $n_m$ is the total number of grid points
on the incoming boundary $\Gamma_{m,-}$ of $\Kcal_m$. 
In other words, the Green's matrix $\Gmat^\vep_m$ is the analog of the operator $\mathcal{A}^\vep_m: f \mapsto b$ defined by
\[
\begin{cases}
v\partial_x b -\frac{1}{\varepsilon} \LL [b] = 0\,, & \; (x,v) \in \Kcal_m\,,\\
b|_{\Gamma_{m,-}} = f. &
\end{cases}
\]
\subsubsection*{Online step}

The online step obtains the global solution as a linear combination of
all local basis functions,  as follows:
\begin{equation}\label{eqn:dd_soln}
u^\vep = \sum_mu^\vep_m = \sum_m\sum_{i}c_{m,i}b_{m,i}\,,
\end{equation}
where the coefficients $c_{m,i}$ are chosen to satisfy the following
conditions:
\begin{itemize}
	\item[$\ast$]{Continuity:} $u_m({L_{m,m+1}}) =
	u_{m+1}(L_{m,m+1})$, which can be stated in more detail as
	\begin{equation}\label{eqn:rte_cont}
	\begin{cases}
	u_m(\Gamma_{m,+}\cap L_{m,m+1}) = u_{m+1}(\Gamma_{m,+}\cap L_{m,m+1})= u_{m+1}(\Gamma_{m+1,-}\cap L_{m,m+1})\,,\\
	u_m(\Gamma_{m,-}\cap L_{m,m+1}) = u_{m+1}(\Gamma_{m,-}\cap L_{m,m+1})=u_{m+1}(\Gamma_{m+1,+}\cap L_{m,m+1})\,.
	\end{cases}
	\end{equation}
	In both equations, the first equality comes from the continuity
	condition and the second equality follows from
	\[
	\Gamma_{m,\pm}\cap L_{m,m+1} = \Gamma_{m+1,\mp}\cap L_{m,m+1},
	\]
	as illustrated in~\cref{fig:dd_stencils}.
	\item[$\ast$]{Boundary condition:}
	\begin{equation}\label{eqn:rte_bdry}
	u|_{\Gamma_-} = \phi\,.
	\end{equation}
\end{itemize}

%\textcolor{blue}{
Algebraically, we denote by $\Mmat_m$ the matrix that maps inflow
boundary condition $c_m = u_m(\Gamma_{m, -})$ to outflow data
$u_m(\Gamma_{m,+})$, and denote by $\Imat_m^{l}$ (\textit{resp.}
$\Imat_m^r$) the restriction operator on the left edge $L_{m-1,m}$
(\textit{resp.} the right edge $L_{m,m+1}$) of patch $\Kcal_m$. Using
this notation, \eqref{eqn:rte_cont} and~\eqref{eqn:rte_bdry} can
be written as follows:
\begin{equation*}
%\begin{cases}
%\Imat_m^r \Mmat_m c_m = \Imat_{m+1}^l c_{m+1} \,, \\
%\Imat_m^r c_m = \Imat_{m+1}^l \Mmat_{m+1}c_{m+1} \,,
%\end{cases}
\begin{bmatrix}
\Imat_m^r \Mmat_m & -\Imat_{m+1}^l \\
-\Imat_m^r &  \Imat_{m+1}^l \Mmat_{m+1}
\end{bmatrix}
\begin{bmatrix}
c_m \\
c_{m+1}
\end{bmatrix}
=
\begin{bmatrix}
0 \\
0
\end{bmatrix},
\quad  
%\begin{cases}
%\Imat_1^l c_1 = \Imat_1^l \phi \,,\\
%\Imat_M^r c_M = \Imat_M^r \phi \,,
%\end{cases}
\begin{bmatrix}
\Imat_1^l & 0 \\
0	& \Imat_M^r 
\end{bmatrix}
\begin{bmatrix}
c_1 \\
c_M
\end{bmatrix}
=
\begin{bmatrix}
\Imat_1^l \phi\\
\Imat_M^r \phi
\end{bmatrix} \,.
\end{equation*}
Assembling these conditions over all patches, we obtain
\begin{equation}\label{eqn:rte_cond}
\Pmat c = d\,,
\end{equation}
where
\begin{equation*}
\Pmat = \begin{bmatrix}
\Imat_1^l & 0 & 0 & \ldots & 0 \\
\Imat_1^r\Mmat_1  & -\Imat_2^l & 0 & \ldots & 0 \\
-\Imat_1^r & \Imat_2^l\Mmat_2 &  0 & \ldots & 0 \\
& 	\ddots	& \ddots &	\ddots & 		\\
0 & \ldots & 0 &\Imat_{M-1}^r \Mmat_{M-1}& -\Imat_M^l\\
0 & \ldots & 0 & -\Imat_{M-1}^r & \Imat_M^l\Mmat_{M} \\
0 & \ldots & 0 & 0		&\Imat_M^r
\end{bmatrix}\,, \quad
c =
\begin{bmatrix}
c_1 \\
c_2 \\
\vdots \\
c_M
\end{bmatrix}\,, \quad d = 
\begin{bmatrix}
\Imat_1^l \phi \\
0	\\
\vdots \\
0 \\
\Imat_M^r \phi 
\end{bmatrix} \,.
\end{equation*}
We obtain the solution by substituting the coefficients $\{c_{m,i}: i
= 1,\ldots, n_m\,, m = 1,\ldots,M\}$ from \eqref{eqn:rte_cond} into
\eqref{eqn:dd_soln}.

\subsection{Reduced basis approach}\label{sec:buffer_transport}

%% \sw{I just didn't understand some of the claims in this section,
%%   e.g.  ``It is clear that each column of $\wt{\Gmat}_m^{\vep}$ is
%%   in $\spanop \Gmat_m^{\vep}$ (since the latter consists of all
%%   possible local solutions)'' didn't make sense to me because
%%   $\wt{\Gmat}$ is defined on the larger domain $\wt{\Kcal}$ and the
%%   delta functions on the boundaries of this domain are different to
%%   the delta functions on the boundaries of the original patch. I'm
%%   going to rewrite in a way that makes sense to me and you can
%%   restore stuff from the earlier file if I'm wrong.}

An approximation to the local solution space for a patch $\Kcal_m$
starts by defining the larger ``buffered'' patch $\wt{\Kcal}_m
\supset\supset \Kcal_m$.  The buffered patch has boundaries
$\wt{\Gamma}_{\pm,m}$, as illustrated in~\cref{fig:stencil_transport}.  We denote by $\wt{\Gmat}^{\vep}_m$ the Green's
matrix obtained by solving the local equation on the buffered patch
$\wt{\Kcal}_m$ with all possible boundary conditions, as in the
construction of \eqref{eq:GmatTrans}, but restricted to the domain
$\Kcal_m$.  More precisely, we can obtain $\wt{b}_{m, i}$ by solving
\begin{equation*}
\begin{cases}
v\partial_x \wt{b}_{m,i} -\frac{1}{\varepsilon} \LL [\wt{b}_{m,i}] = 0\,,\quad (x,v) \in \wt{\Kcal}_m\,,\\
\wt{b}_{m,i}|_{\wt{\Gamma}_{m,-}} = \delta_{i},
\end{cases}
\end{equation*}
where $\wt{\Gamma}_{m, -}$ is the incoming portion of the boundary of
$\partial \wt{\Kcal}_m$, and then define
\begin{equation*}
\wt{\Gmat}_m^{\vep} = \left[\wt{b}_{m,1} \vert_{\Kcal_m}\,,\dotsc,\wt{b}_{m,\wt{n}_m}\vert_{\Kcal_m} \right]\,,
\end{equation*}
where $\wt{n}_m$ is the number of incoming boundary grid points.  It
is clear that each column of $\wt{\Gmat}_m^{\vep}$ solves \eqref{eqn:rte_full} inside $\Kcal_m$, and thus is in $\spanop
\Gmat_m^{\vep}$ (since the latter consists of all possible local
solutions).
% \sw{Really? This is not so clear to me.} ~\kc{I added a bit
% explanation here.}
Moreover, the
solution to the global equation restricted to $\Kcal_m$ also lies in
$\spanop \wt{\Gmat}_m^{\vep}$.

Due to the diffusion limit, as discussed in~\cref{sec:ap}, the
Green's matrix $\wt{\Gmat}_m^{\vep}$ is approximately low-rank and can
be compressed through random sampling.\footnote{We do not directly
	approximate $\Gmat_m^{\vep}$, which is not low-rank due to the
	singularity near $\partial \Kcal_m$ caused by the incoming Dirichlet
	boundary condition at $\Gamma_{m, -}$. For $\wt{\Gmat}_m^{\vep}$,
	because of the presence of the buffer, this singularity does not
	appear in $\Kcal_m$, causing $\wt{\Gmat}_m^{\vep}$ to be
	approximately low-rank. The use of a buffer is similar to the
	oversampling approach in the multiscale finite element
	method~\cite{HW97}.}  As in~\cref{sec:offon}, we solve the
following system with randomized boundary conditions to obtain each
basis function $\wt{r}_{m,i}$:
% \kc{I use $\wt{r}_{m,i}$ for solution of \eqref{eqn:rte_random} and
% $r_{m,i}$ for its restriction in $\Kcal_m$.}
\begin{equation}\label{eqn:rte_random}
\begin{cases}
v\partial_x \wt{r}_{m,i} -\frac{1}{\varepsilon} \LL [\wt{r}_{m,i}] = 0\,,\quad (x,v) \in \wt{\Kcal}_m\,,\\
\wt{r}_{m,i}|_{\wt{\Gamma}_{m,-}} = \omega_{m,i}\,,
\end{cases}
\end{equation}
where $\omega_{m, i}$ takes i.i.d.~standard Gaussian at all grid
points on the boundary $\wt{\Gamma}_{m,-}$ and $i$ is the index of
random samples corresponds to different realizations of the boundary
data.  We then take restrictions $r_{m,i} = \wt{r}_{m,i}|_{\Kcal_m}$ and assemble them into local reduced Green's matrix:
\begin{equation*}
\Gmat^{\vep,r}_m = \left[ r_{m,1}\,,\dotsc,r_{m,k_m} \right] =
\wt{\Gmat}_{m}^{\vep} \left[ \omega_{m,1}\,,\dotsc,\omega_{m,k_m} \right] \,.
\end{equation*}
According to Theorem~\ref{thm:average_spectral}, we
  have with high probability that
\[
\frac{\| \wt{\Gmat}^\vep_m - \Pmat_{\wt{\Gmat}^{\vep,r}_m} (\wt{\Gmat}^\vep_m) \| }{\| \wt{\Gmat}^\vep_m \|} \ll 1.
\]
Because of the approximate low-rank property, we can take $k_m\ll
n_m$, thus reducing significantly the dimension of the local solution
space (and also the dimension of the global linear system in the
online step). 
For $m = 1$ and $m=M$ (for which the patch $\Kcal_m$
is at the boundary of full domain), we use the full basis matrix
$\Gmat_m^{\vep, r} = \Gmat_m^{\vep}$, so that we can capture the
boundary conditions that are imposed on the full domain.

%\jl{we may want to move this to the full basis approach, so that to unify more the approaches}. ~\kc{agree.}

In the online step, we write the solution as
\begin{equation} \label{eq:gb2}
u^\vep = \sum_mu^\vep_m \approx \sum_m\sum_{i}\tilde{c}_{m,i}r_{m,i}\,,
\end{equation}
with $\{\tilde{c}_{m,i},m=1,2,\dotsc,M, \; i = 1,2,\dotsc,k_m\}$ being
the coefficients for the reduced basis. We denote by $\wt{\Mmat}_m$
and $\wt{\Wmat}_m$ the matrix that maps $\tilde{c}_m$ to outflow data
$\sum_i \tilde{c}_{m,i} r_{m,i}(\Gamma_{m, +})$ and inflow data
$\sum_i \wt{c}_{m,i}r_{m,i}(\Gamma_{m,-})$ respectively. Note that the
analogous $\Wmat$ would become identity in the full basis approach. By
imposing the continuity condition and exterior boundary condition, we
obtain
\begin{equation*}
\begin{bmatrix}
\Imat_m^r \wt{\Mmat}_m & -\Imat_{m+1}^l \wt{\Wmat}_{m+1} \\
-\Imat_m^r\wt{\Wmat}_m &  \Imat_{m+1}^l \wt{\Mmat}_{m+1}
\end{bmatrix}
\begin{bmatrix}
\wt{c}_m \\
\wt{c}_{m+1}
\end{bmatrix}
=
\begin{bmatrix}
0 \\
0
\end{bmatrix},
\quad  
\begin{bmatrix}
\Imat_1^l\wt{\Wmat}_1 & 0 \\
0	& \Imat_M^r \wt{\Wmat}_M
\end{bmatrix}
\begin{bmatrix}
\wt{c}_1 \\
\wt{c}_M
\end{bmatrix}
=
\begin{bmatrix}
\Imat_1^l \phi\\
\Imat_M^r \phi
\end{bmatrix} \,.
\end{equation*}
Assembling these equations, we obtain
% ~\kc{I think we should write down $\wt{\Pmat}$ explicitly here as the construction is not trivial. First, we should consider a new system similar to the equation above \eqref{eqn:rte_cond}, which should be
%\begin{equation*}
%\begin{bmatrix}
%\Imat_m^r \wt{\Mmat}_m & -\Imat_{m+1}^l \wt{\Wmat}_{m+1} \\
%-\Imat_m^r\wt{\Wmat}_m &  \Imat_{m+1}^l \wt{\Mmat}_{m+1}
%\end{bmatrix}
%\begin{bmatrix}
%\wt{c}_m \\
%\wt{c}_{m+1}
%\end{bmatrix}
%=
%\begin{bmatrix}
%0 \\
%0
%\end{bmatrix},
%\quad  
%\begin{bmatrix}
%\Imat_1^l & 0 \\
%0	& \Imat_M^r 
%\end{bmatrix}
%\begin{bmatrix}
%\wt{c}_1 \\
%\wt{c}_M
%\end{bmatrix}
%=
%\begin{bmatrix}
%\Imat_1^l \phi\\
%\Imat_M^r \phi
%\end{bmatrix} \,.
%\end{equation*}
%where $\wt{\Wmat}_{m}$ is matrix that maps $\wt{c}_m$ to the inflow data $\sum_i \wt{c}_{m,i}r_{m,i}(\Gamma_{m,-})$. This setting is pretty similar to what we have written for the elliptic case, see equation \eqref{eqn:elliptic_cond_tilde}.}

\begin{equation} \label{eq:gb3}
\wt{\Pmat} \tilde{c} = d \,,
\end{equation}
where
\begin{equation*}
\wt{\Pmat} = \begin{bmatrix}
\Imat_1^l\wt{\Wmat}_1 & 0 & 0 & \ldots & 0 \\
\Imat_1^r\wt{\Mmat}_1  & -\Imat_2^l\wt{\Wmat}_2 & 0 & \ldots & 0 \\
-\Imat_1^r\wt{\Wmat}_1 & \Imat_2^l\wt{\Mmat}_2 &  0 & \ldots & 0 \\
& 	\ddots	& \ddots &	\ddots & 		\\
0 & \ldots & 0 &\Imat_{M-1}^r \wt{\Mmat}_{M-1}& -\Imat_M^l\wt{\Wmat}_M\\
0 & \ldots & 0 & -\Imat_{M-1}^r\wt{\Wmat}_{M-1} & \Imat_M^l\wt{\Mmat}_{M} \\
0 & \ldots & 0 & 0		&\Imat_M^r \wt{\Wmat}_M
\end{bmatrix}\,, \quad
\wt{c} =
\begin{bmatrix}
\wt{c}_1 \\
\wt{c}_2 \\
\vdots \\
\wt{c}_M
\end{bmatrix}\,, \quad d = 
\begin{bmatrix}
\Imat_1^l \phi \\
0	\\
\vdots \\
0 \\
\Imat_M^r \phi 
\end{bmatrix} \,.
\end{equation*}
Since we are working in an approximate local
solution space due to the random sampling, this global linear system
constraint is overdetermined and cannot be solved exactly in general.  Instead,
we use the least-squares solution defined by
\begin{equation}\label{eqn:rte_cond_reduced}
\tilde{c} = \arg\min_{e}\|\wt{\Pmat} e-d\|_2\quad \Rightarrow\quad \tilde{c} = (\wt{\Pmat}^\top\wt{\Pmat})^{-1}\wt{\Pmat}^\top d\,.
\end{equation}

\begin{remark}
	The matrix $\wt{\Pmat}$ is of size $M_p\times N_p$ where $M_p = \sum_{m=1}^M n_m$ and $N_p= \sum_{m=1}^M k_m$.  The typical time complexity for this linear regression problem is of order $\mathcal{O}\left( N_p^2(M_P+N_p) \right)$ whereas for the full basis approach \eqref{eqn:rte_cond}, the  matrix $\Pmat$ is of size $M_p$ by $M_p$ and time complexity is $\mathcal{O}(M_p^3)$. Because   $N_p\ll M_p$, the computation cost of our approach is considerably lower.
\end{remark}

\subsection{Numerical test}
We set $g=1/2$ in \eqref{eqn:HG}, and decompose the domain as in
\eqref{eq:domainK} with $M = 10$.  In the velocity domain, we use
the grid points $v_j = -1+\frac{j}{N_v}$ with $N_v=120$ so that the
mesh size in the velocity domain is $\Delta v = \frac{1}{60}$. We
define the buffered patches $\wt{\Kcal}_m$ to be twice as large as the
original patches $\Kcal_m$, with equal margins on each side.  When
solving the local problems, we use spatial discretization with fine
mesh size $\Delta x = 0.01$. The setup is shown in~\cref{fig:stencil_transport}.

\begin{figure}
	\centering
	\includegraphics[width=0.6\textwidth]{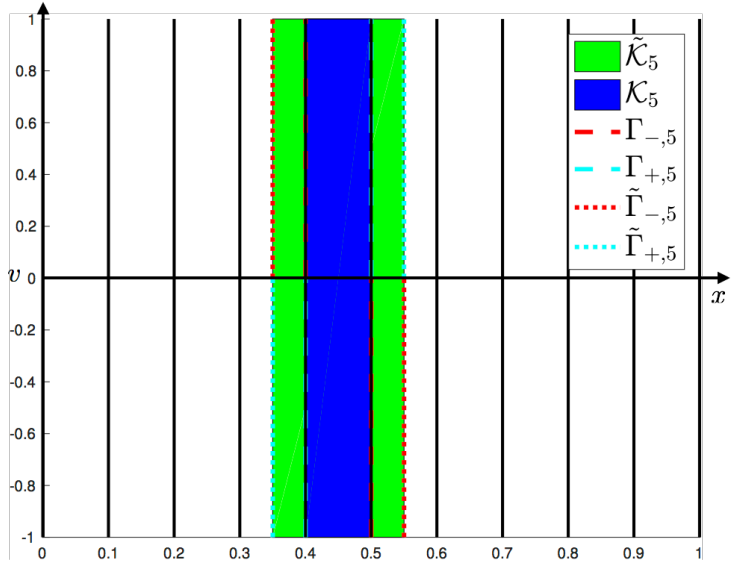}
	\caption{Buffered domain decomposition}
	\label{fig:stencil_transport}
\end{figure}

\subsubsection{Local test}

%\jl{Is this $\Gmat^{\vep}_2$ or $\wt{\Gmat}^{\vep}_2$, i.e., with or without buffer?} \kc{the left figure is for buffered domain and right for $\Gmat_2^\vep$}
In~\cref{fig:rte_local}, we show the \textit{normalized}
singular values (that is the ratio $\sigma_j/\sigma_1$ for
$j=1,2,\dotsc$) of Green's matrix $\Gmat^\vep_{2}$ and
$\wt{\Gmat}^\vep_2$ for the second local patch and the buffered patch,
with Knudsen number $\vep=2^{-6}$. 
%\sw{In the legend of the figure, there is no $\vep$ superscript on the $G$.} \kc{Edited. Elliptic case fixed as well.}
Note that singular
values enjoy fast decay when $\vep$ is small and that the use of a
buffer induces faster decay.  In~\cref{fig:rte_local_random}, we
plot a measure of relative error for different values of $k_m$ and
$\vep$. The quantity plotted is defined by
\begin{equation*}
\text{error} = \frac{\|\wt{\Gmat}_2^\vep-\Qmat\Qmat^\top\wt{\Gmat}_2^{\vep}\|_2}{\|\wt{\Gmat}_2^\vep\|_2}\,,\quad\text{with}\quad \Gmat_2^{\vep,r} = \Qmat\mathsf{R}\,,
\end{equation*}
that is, $\Qmat$ is obtained from a $QR$ decomposition of
$\Gmat^{\vep,r}_2$, for which the number of columns increases as $k_m$
increases. As $k_m$ increases, the range of $\Gmat_2^{\vep,r}$
captures the range of $\wt{\Gmat}_2^\vep$ more and more accurately,
and that the approximation is satisfactory only for small values of
$\vep$.
%% \sw{Isn't this true? I wonder if we should push this further and
%%   consider only very small $\vep$, that is, drop $2^0$ and $2^{-1}$
%%   and include $2^{-5}$ and $2^{-6}$.} ~\kc{Yes you are right we can
%%   definitely push further. I updated all figures in this section
%%   for $\vep=2^0,2^{-2},2^{-4},2^{-6}$. Qin and I want to keep the
%%   large $\vep$ case to show that random sampling does not work well
%%   in that case though. }

In~\cref{fig:rte_local_mode}, we construct random local solution
space span$\{\Gmat_{2}^{\vep,r}\}$ with $k_2= 50$ and show how well
this random solution space can capture the first 3 left singular modes
of $\Gmat_{2}^\vep$ with $\vep = 2^{-6}$.

%% show the relative
%% error of $\Gmat^{\vep,r}_{2}$ and $\wt{\Gmat}^{\vep}_{2}$ as $k_m$
%% increases for different $\vep$. Here the relative error is defined by:
%% \begin{equation*}
%% \text{error} = \frac{\|\wt{\Gmat}_2^\vep-\Qmat\Qmat^\top\wt{\Gmat}_2^{\vep}\|_2}{\|\wt{\Gmat}_2^\vep\|_2}\,,\quad\text{with}\quad \Gmat^{\vep,r} = \Qmat\mathsf{R}\,.
%% \end{equation*}
%% Here $\Qmat$ is the unitary part of the QR factorization of $\Gmat^{\vep,r}$.

%\jl{needs work}

\begin{figure}
	%\includegraphics[width=0.45\textwidth]{RTE_LocalSVD.eps}
	%\includegraphics[width=0.45\textwidth]{RTE_LocalSVD2.eps}
	%\caption{The left plot shows the normalized singular values of $\Gmat^{\vep}_{2}$ for different Knudsen number: $\vep = 2^0,2^{-1},\ldots,2^{-4}$ on buffered domain $\tilde{\mathcal{K}}_2$ and the right plot shows that on domain $\mathcal{K}_2$. After adding buffer zone, the Green's matrix enjoys faster decay in its singular values as Knudsen number diminishes. }\label{fig:rte_local}
	\centering
	\includegraphics[width=0.6\textwidth]{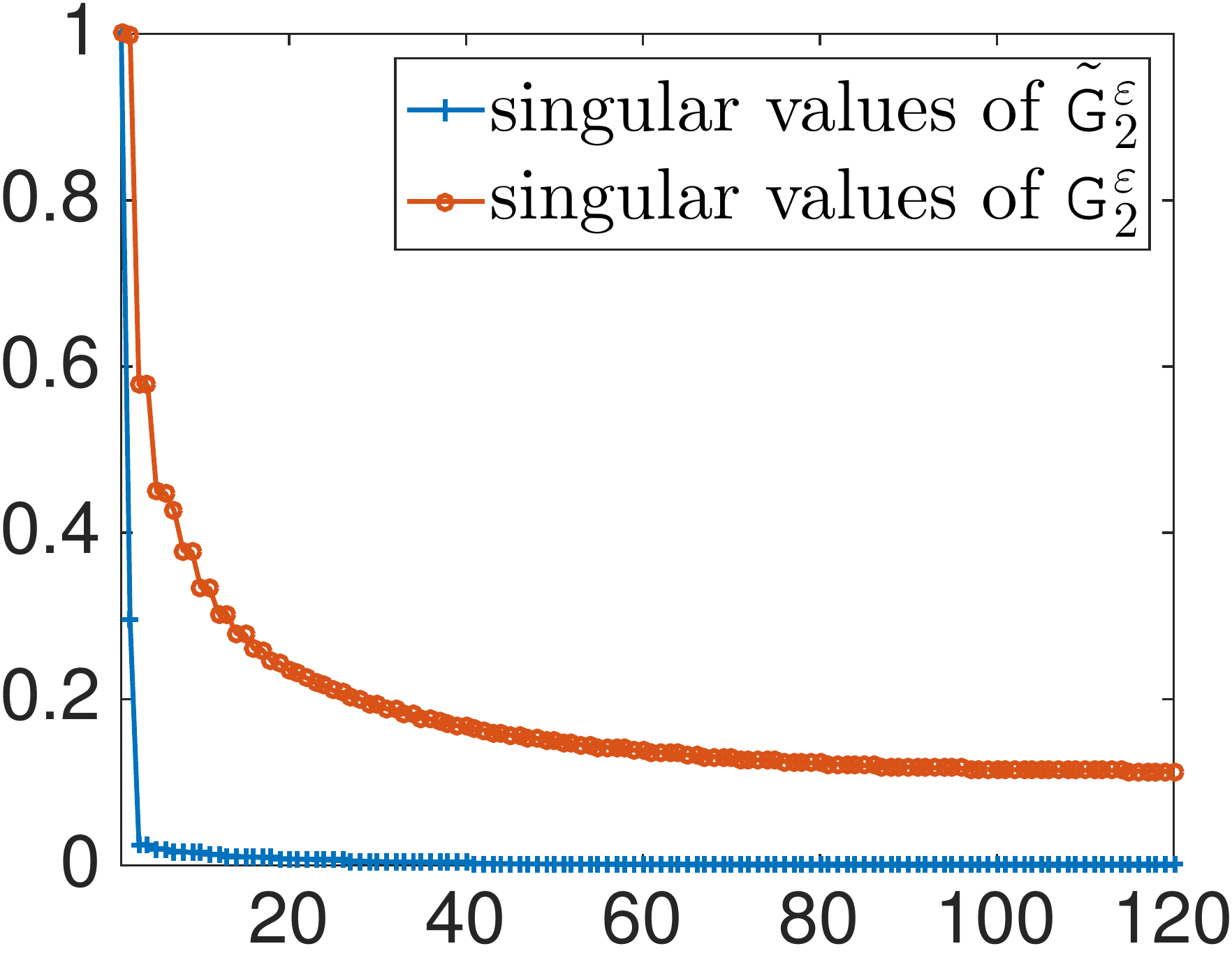}
	\caption{A comparison of normalized singular values of
		$\Gmat_{2}^\vep$ and $\wt{\Gmat}_2^\vep$ when $\vep =
		2^{-6}$. Use of a buffer zone ensures that Green's matrix
		enjoys faster decay in its singular
		values.} \label{fig:rte_local}
\end{figure}

\begin{figure}\label{fig:RTE_LocalError}
	\centering
	\includegraphics[width=0.6\textwidth]{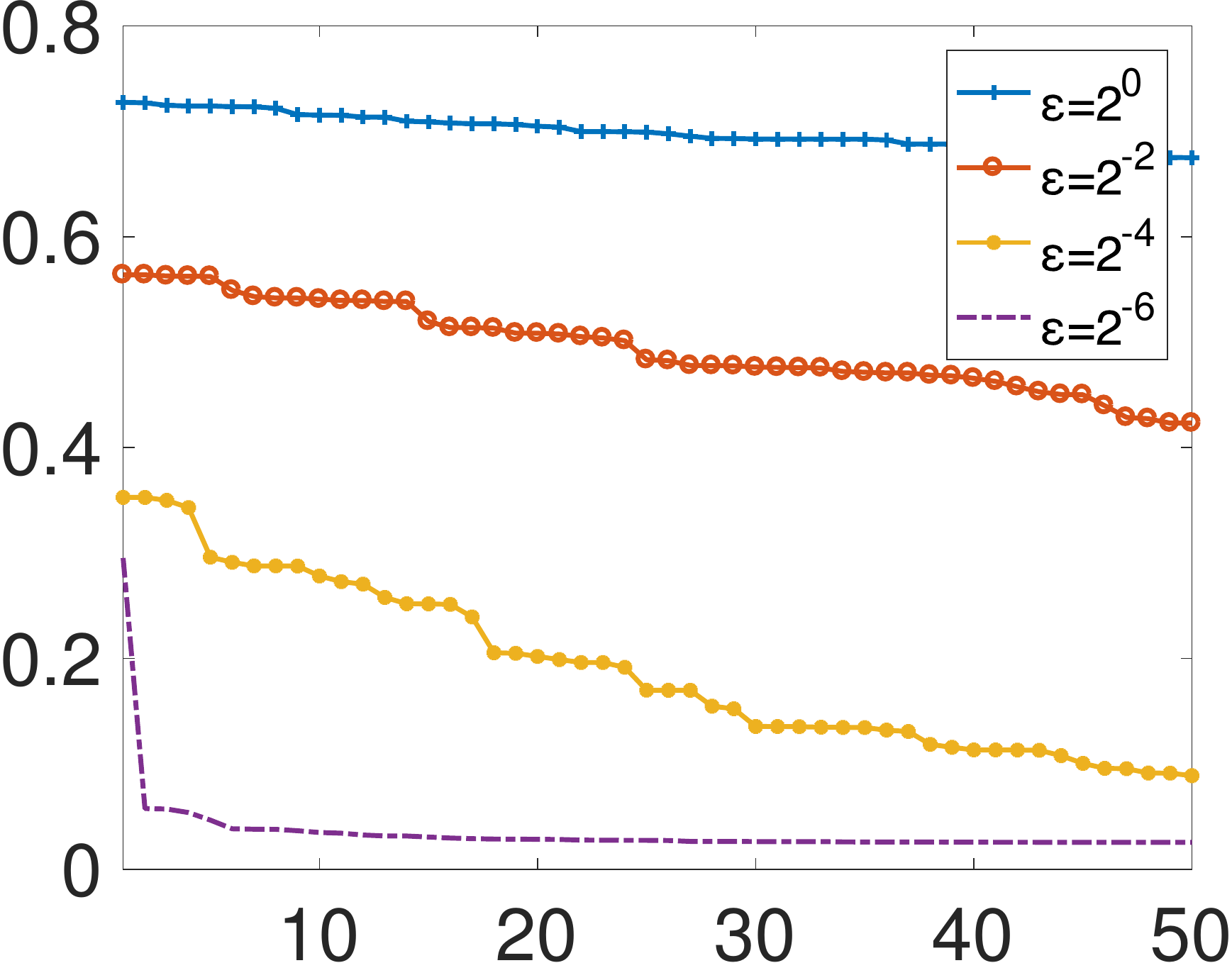}
	\caption{Effectiveness of the random sampling for different
		Knudsen number: $\vep = 2^0,2^{-2},2^{-4},2^{-6}$ on
		buffered domain $\tilde{\mathcal{K}}_2$. For each $\vep$,
		the approximate range captured by $\Gmat_2^{\vep,r}$
		improves as the number of random modes $k_m$ increases. Much
		better approximations are obtained for smaller $\vep$ than
		for larger values.}
	\label{fig:rte_local_random}
\end{figure}

\begin{figure}
	\includegraphics[width=0.3\textwidth]{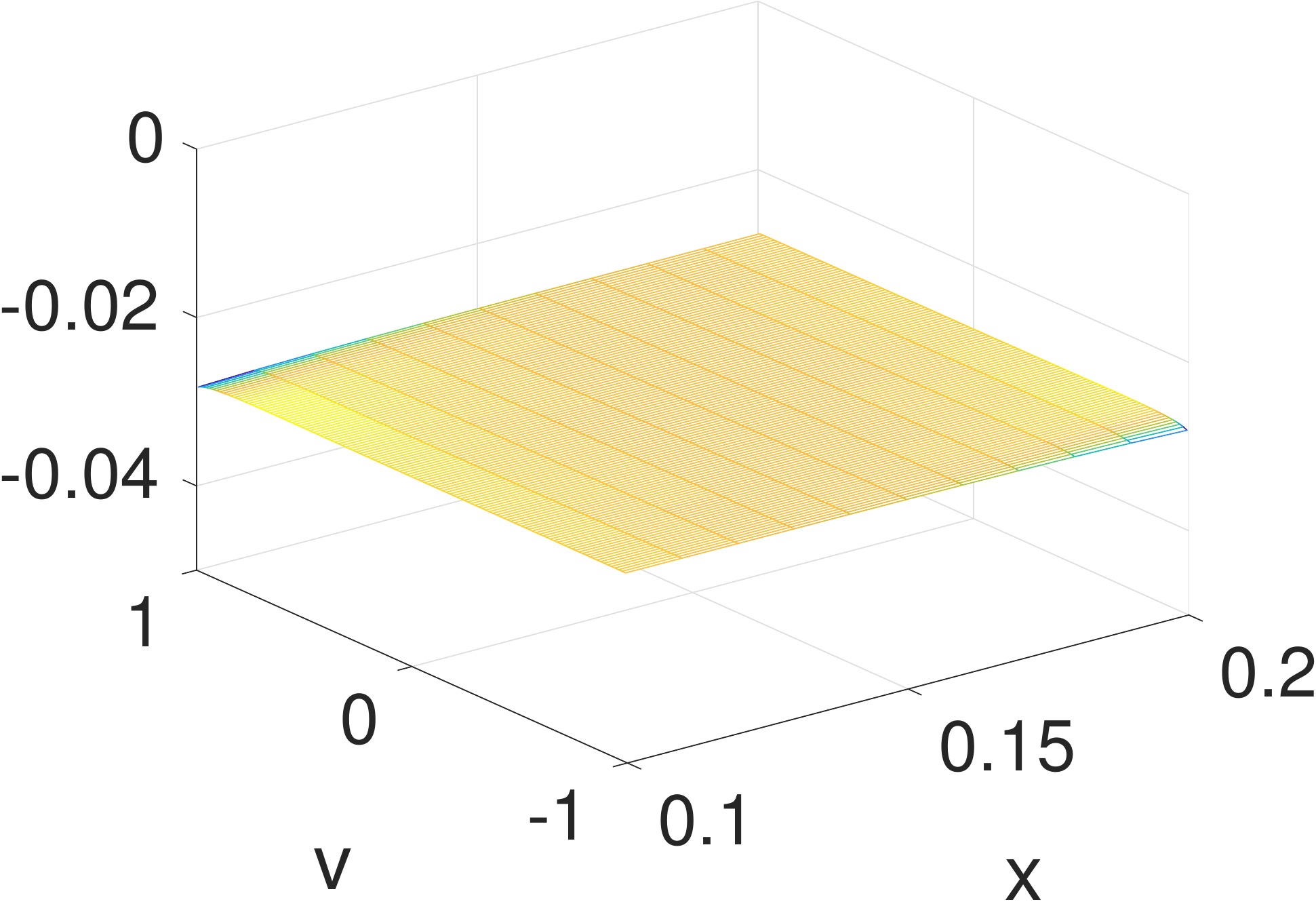}
	\includegraphics[width=0.3\textwidth]{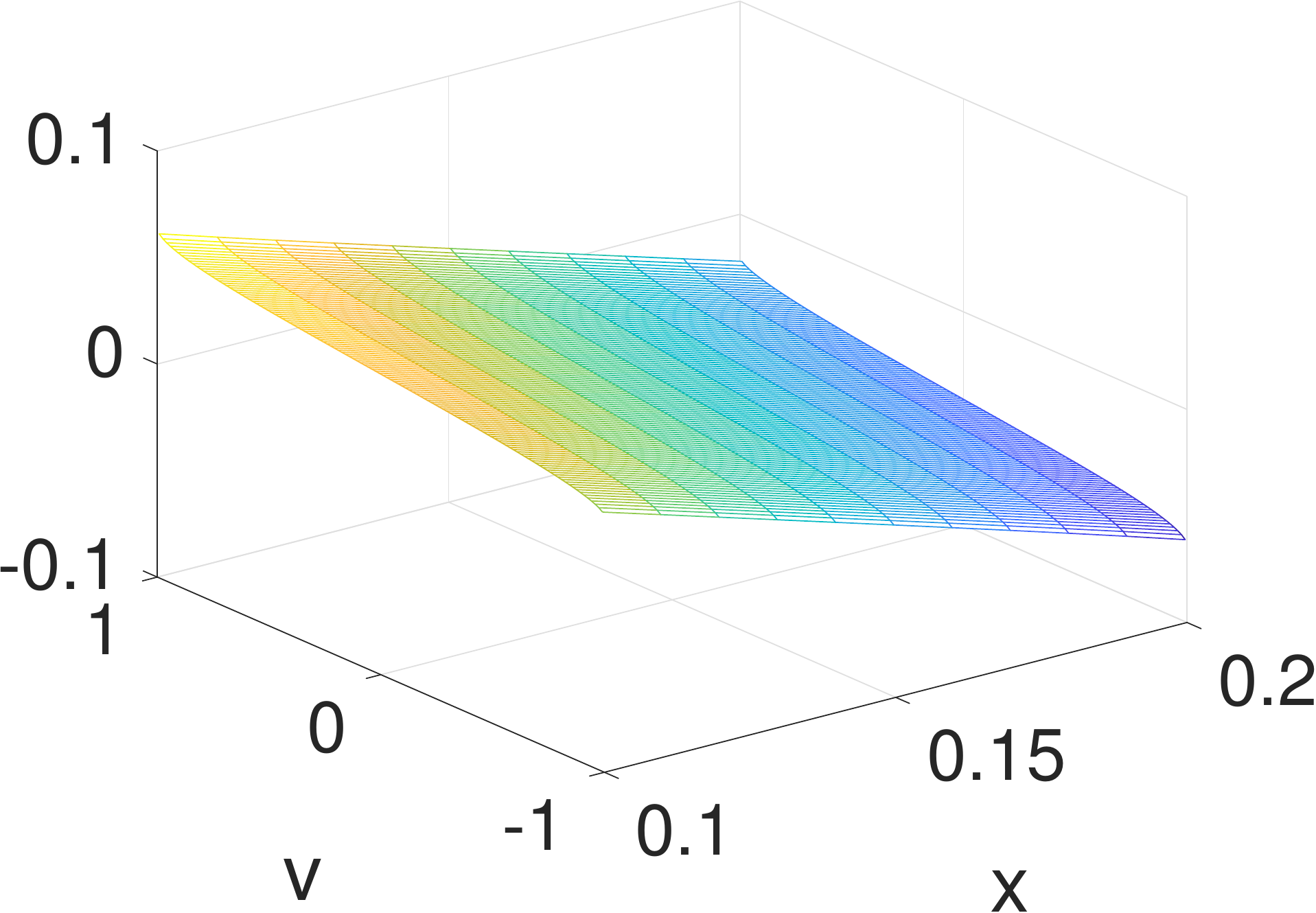}
	\includegraphics[width=0.3\textwidth]{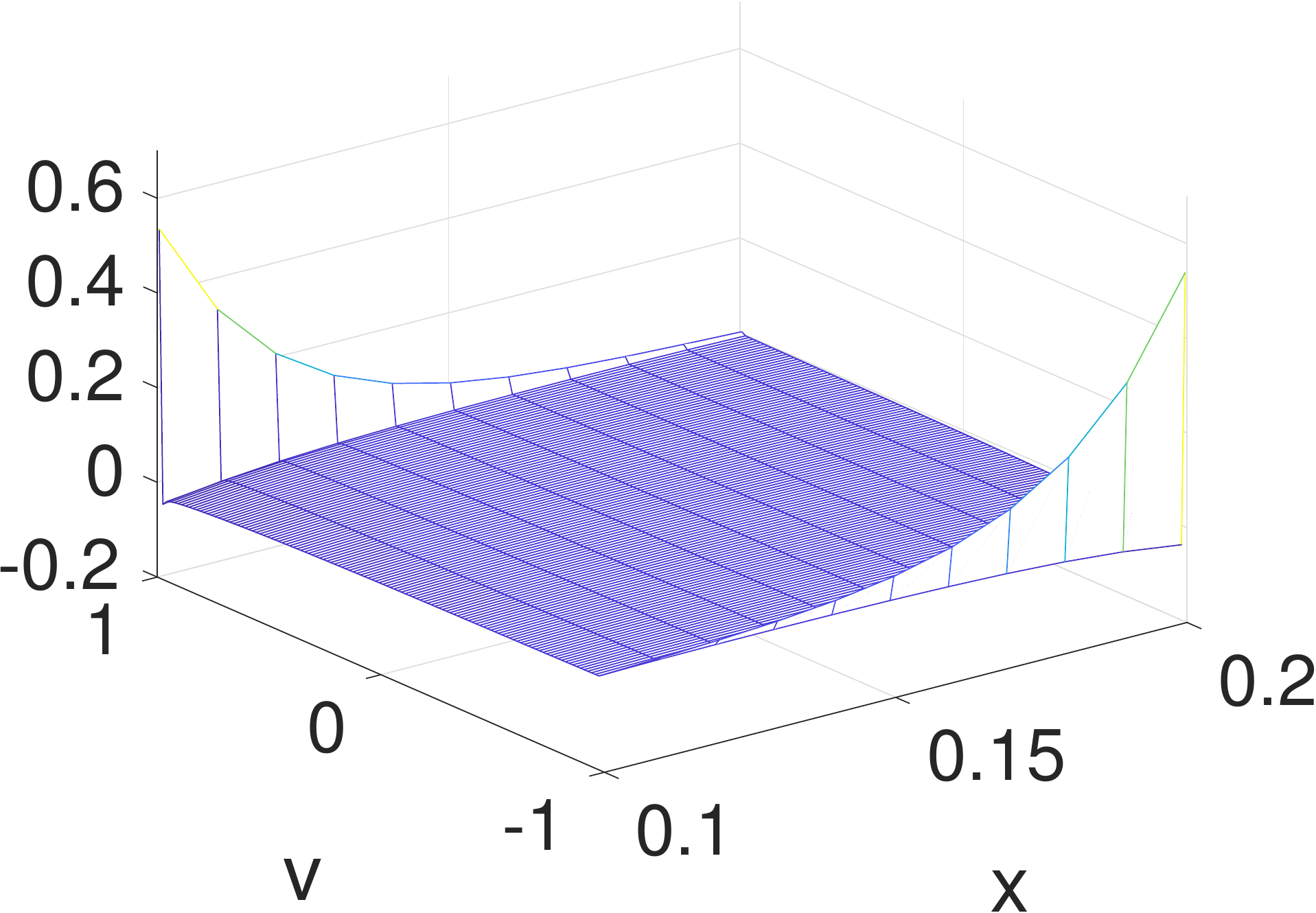}\\
	\includegraphics[width=0.3\textwidth]{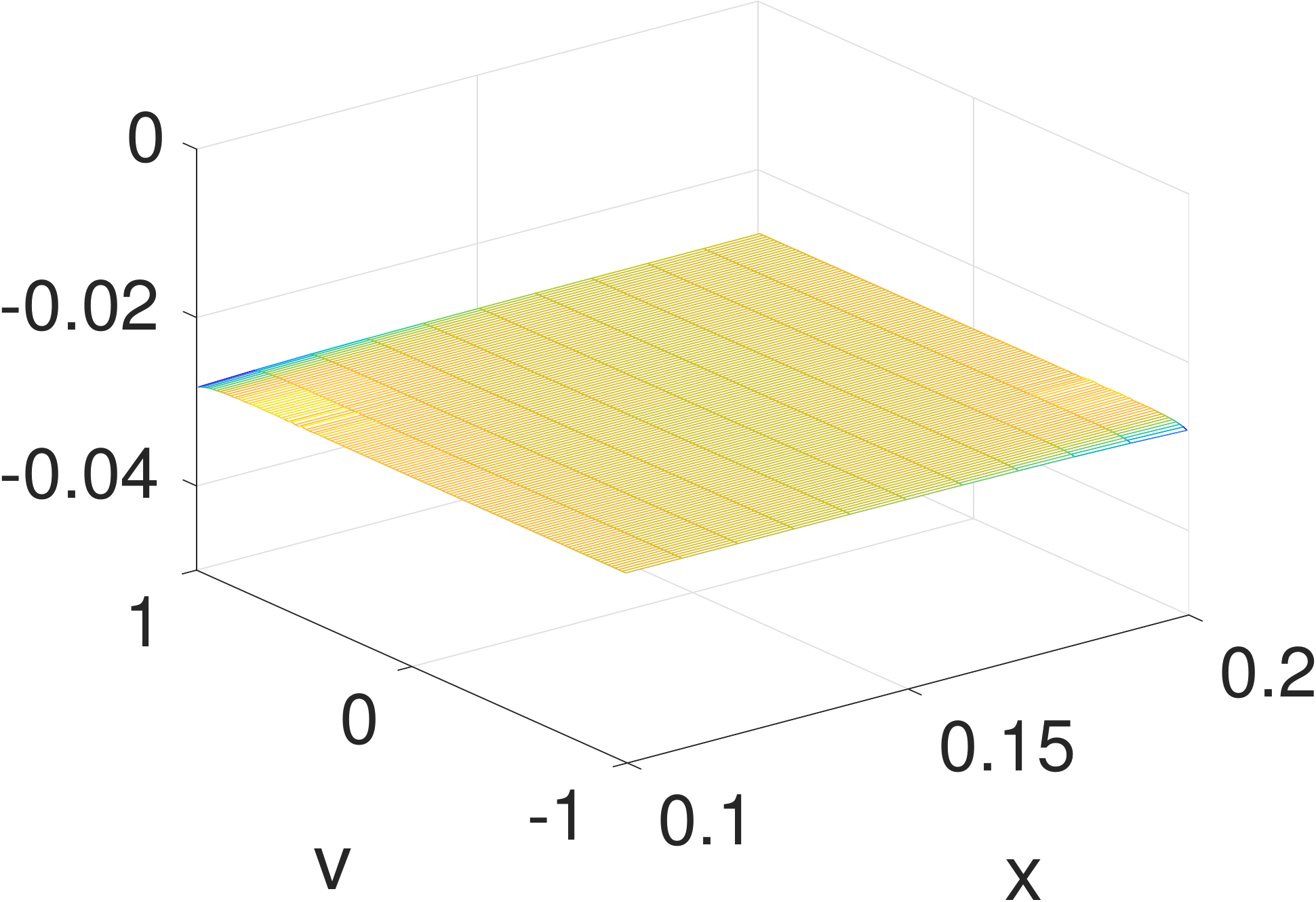}
	\includegraphics[width=0.3\textwidth]{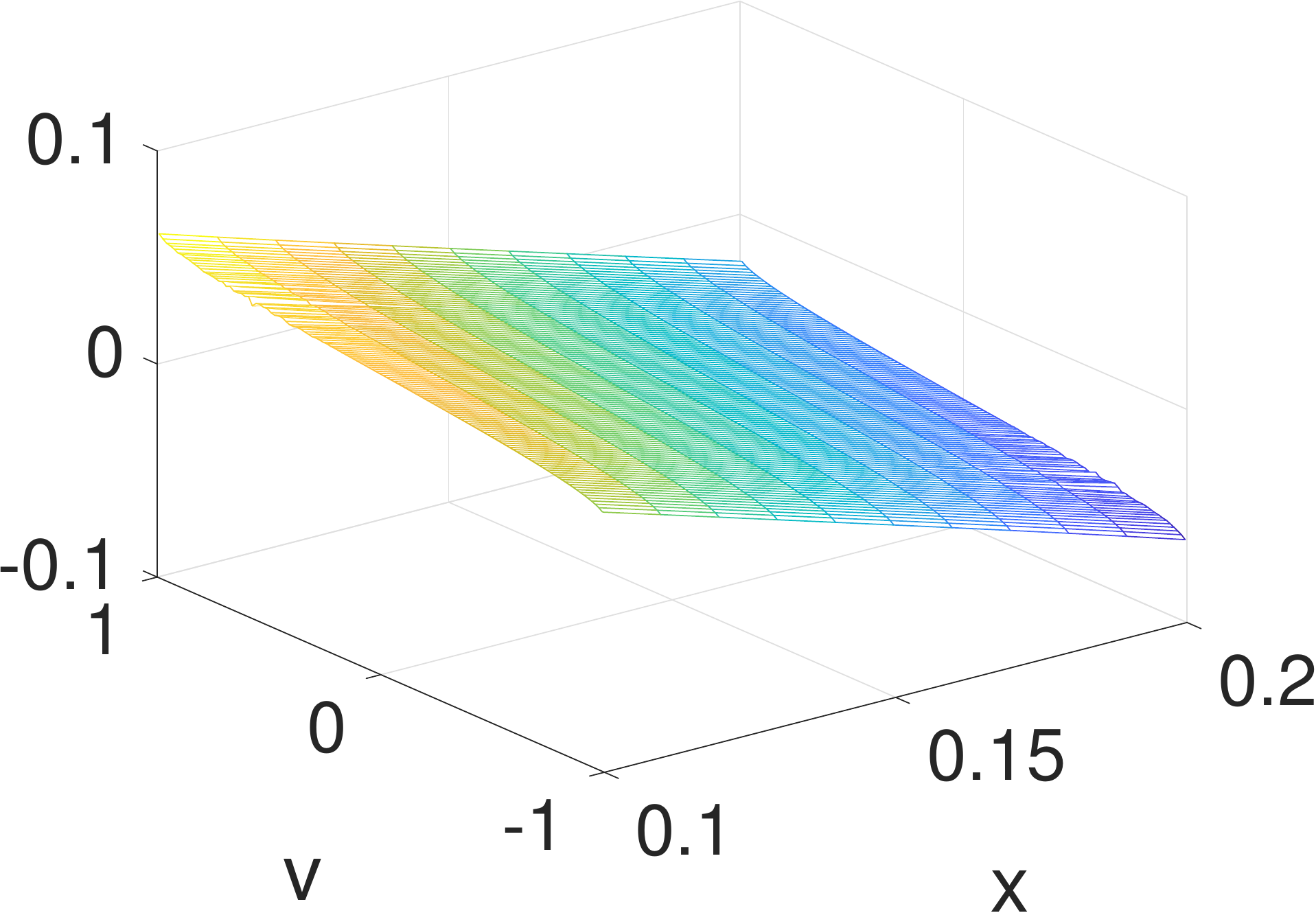}
	\includegraphics[width=0.3\textwidth]{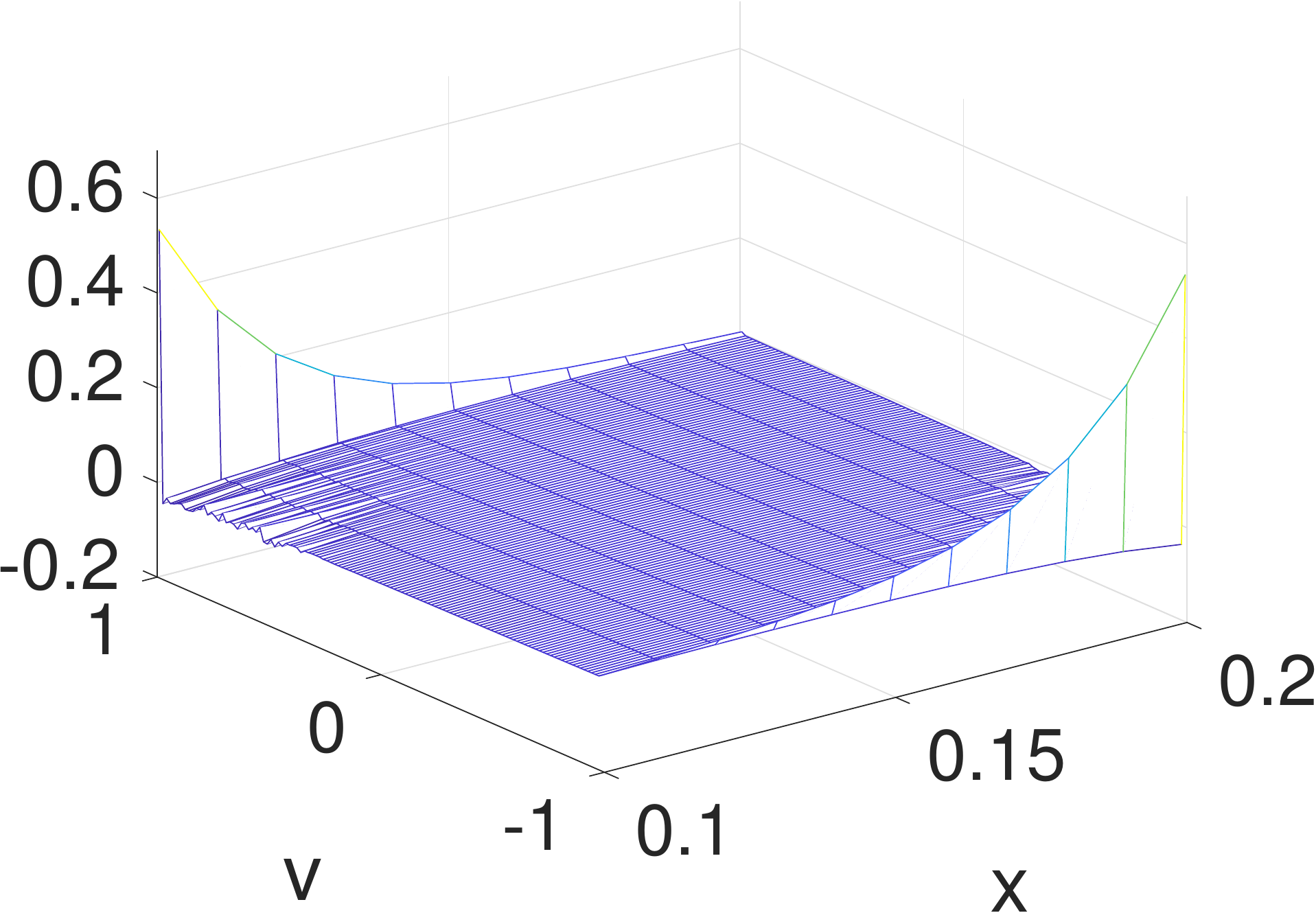}\\
	\caption{For small Knudsen number $\vep= 2^{-6}$, the first row shows the first three singular vectors of $\Gmat^\vep_{2}$ and the second row shows their projection  into  span$\{\Gmat^{\vep,r}_{2}\}$ with $k_2=50$.}
	\label{fig:rte_local_mode}
\end{figure}

\subsubsection{Global test}
In the global test, we consider solving \eqref{eqn:RTE} with boundary
data
\[
\phi(v) = \begin{cases}
3 + \sin(2\pi v)\,,\quad(x=0,v > 0)\\
2 + \sin(2\pi v)\,, \quad(x=1,v<0),
\end{cases}
\]
and compare the numerical solutions of the full-basis and randomized
reduced-basis approaches.~\cref{fig:rte_global} shows three
solutions: reference solution, the solution obtained from the reduced
basis with $k_m=10$, and the solution obtained from the reduced basis
with $k_m=50$. Results are given for $\vep = 2^0$ and $\vep =
2^{-6}$. 
%With $k_m=10$ and $k_m=50$, the compression rate is $8.3\%$ and $41.67\%$ respectively.
We see that the information contained in $n_m=120$ bases is largely
captured by the random bases with $k_m=10$ (for all $m$) when
$\vep=2^{-6}$, at considerably lower computational cost.
% \sw{how is compression rate defined? And is is really higher for
%  $50$ than for $10$?  And is it independent of $\vep$?} ~\kc{The
%  compression rate is set as the ratio between number of random modes
%  $k_m$ and the dimension of Green's matrix $\Gmat_{2}^\vep$. It
%  shows how much effort we want to save in comparison to the full
%  basis approach. I changed the sentence above so that we don't need
%  to define compression rate anymore.}
The quantitative error-decay as a function of $k_m$ is
plotted in~\cref{fig:rte_global_error}.
\begin{figure}
	\includegraphics[width=0.3\textwidth]{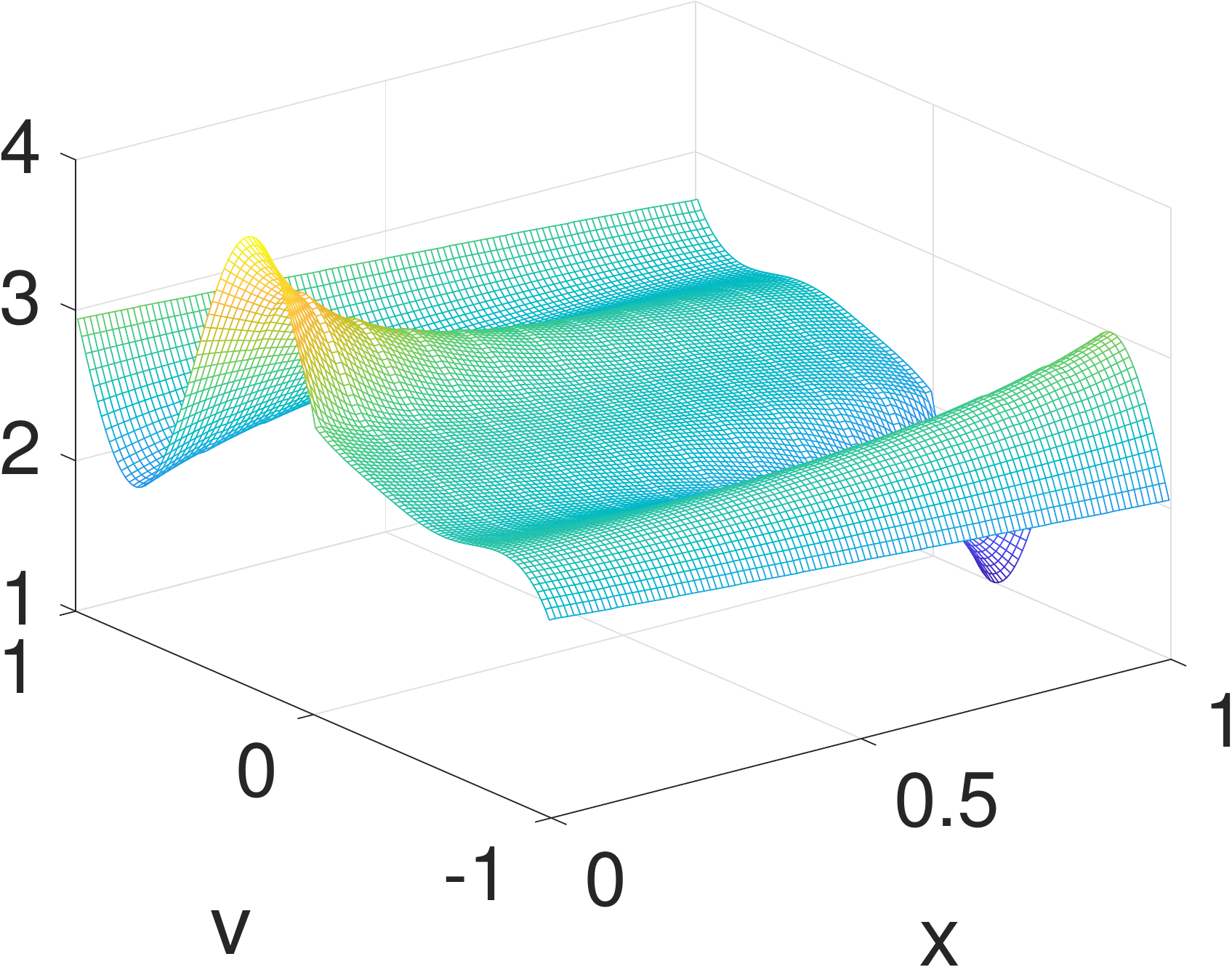}
	\includegraphics[width=0.3\textwidth]{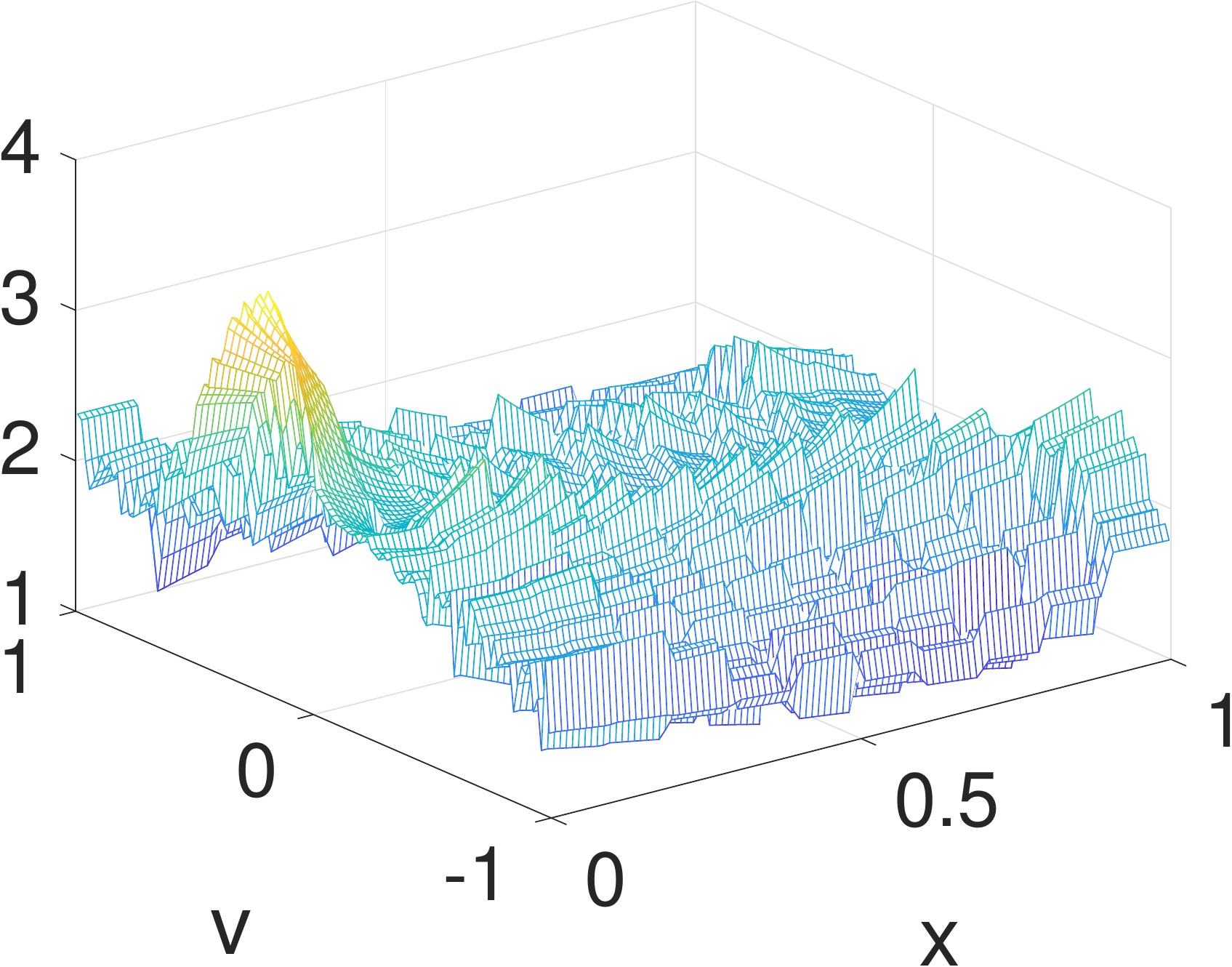}
	\includegraphics[width=0.3\textwidth]{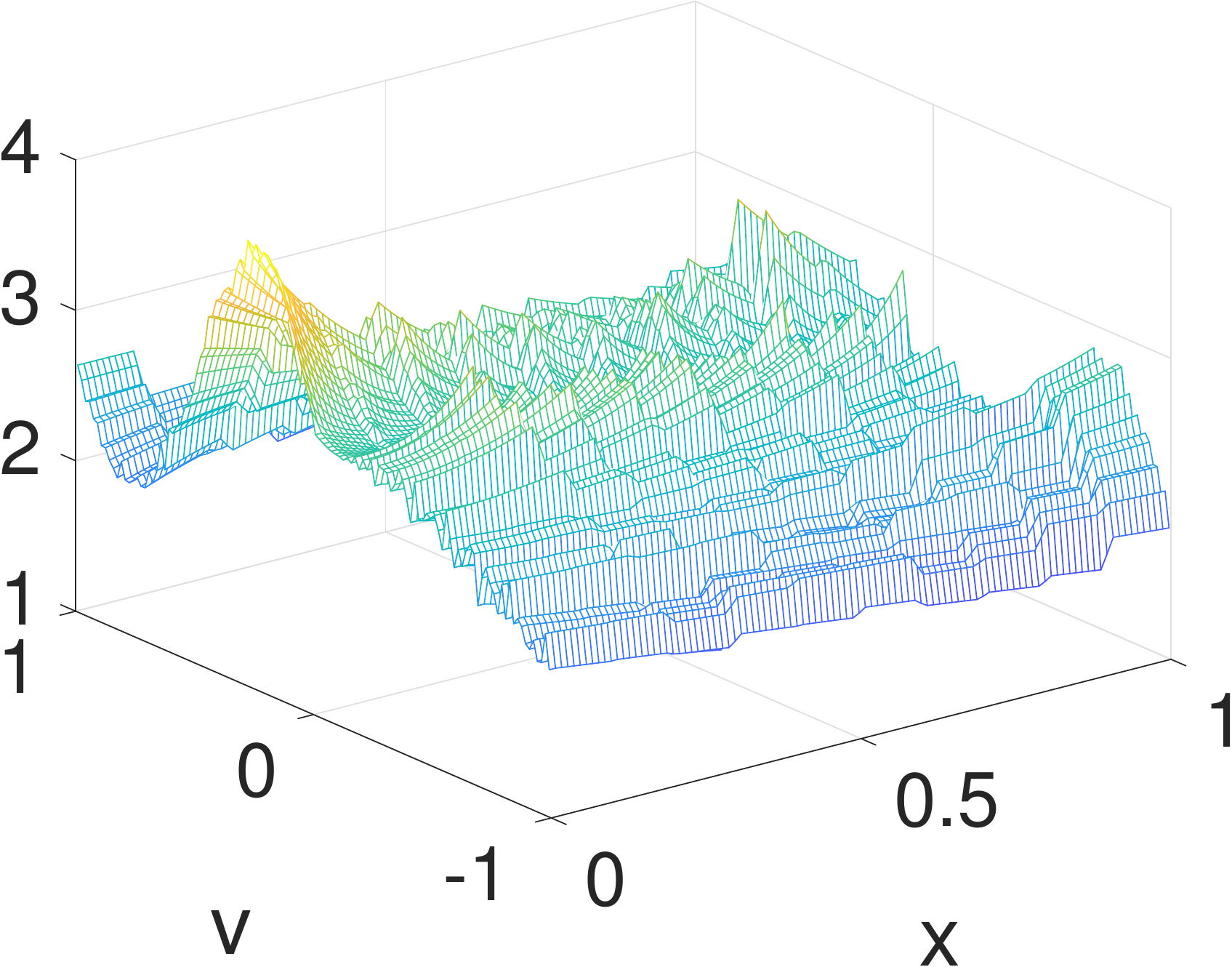}\\
	\includegraphics[width=0.3\textwidth]{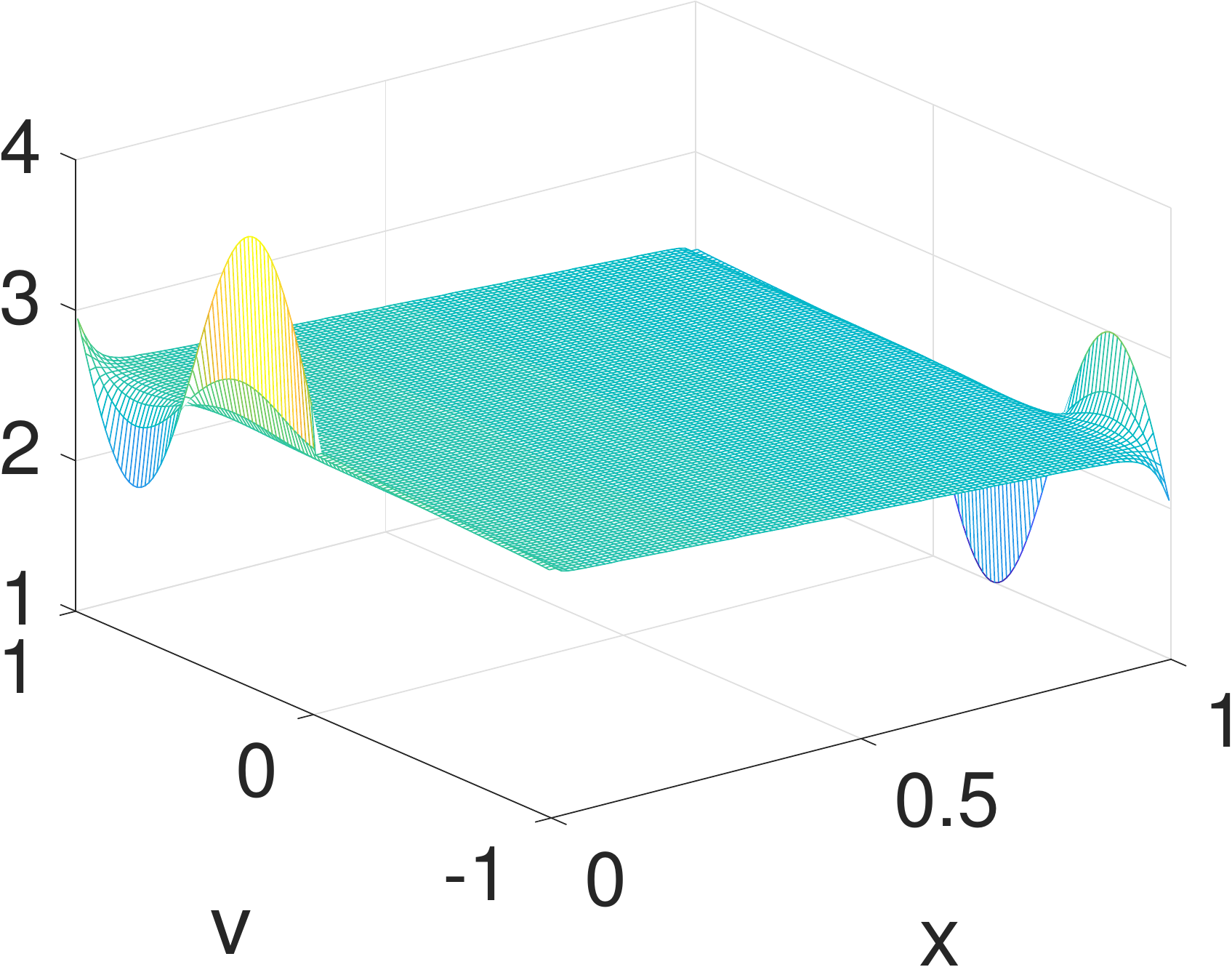}
	\includegraphics[width=0.3\textwidth]{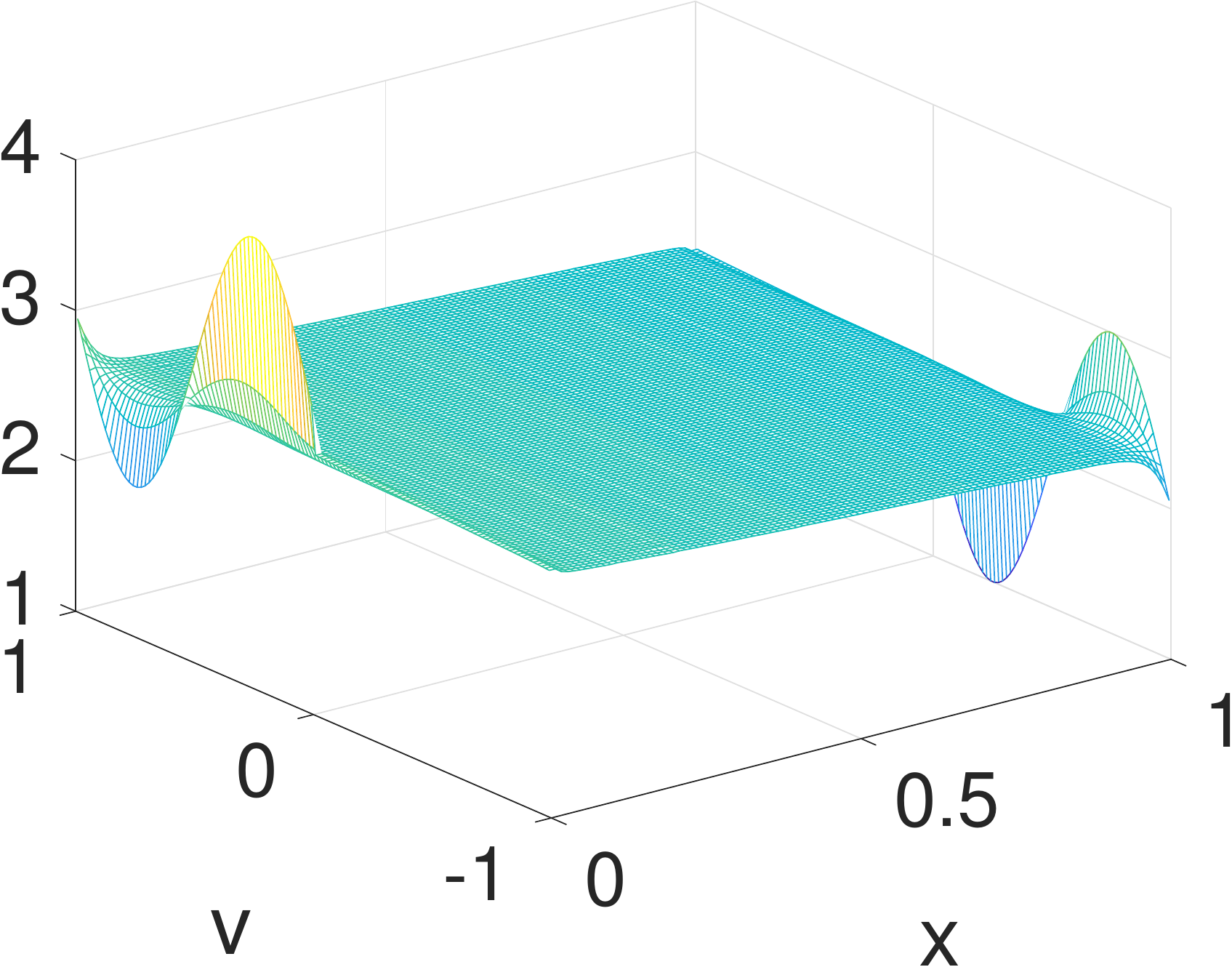}
	\includegraphics[width=0.3\textwidth]{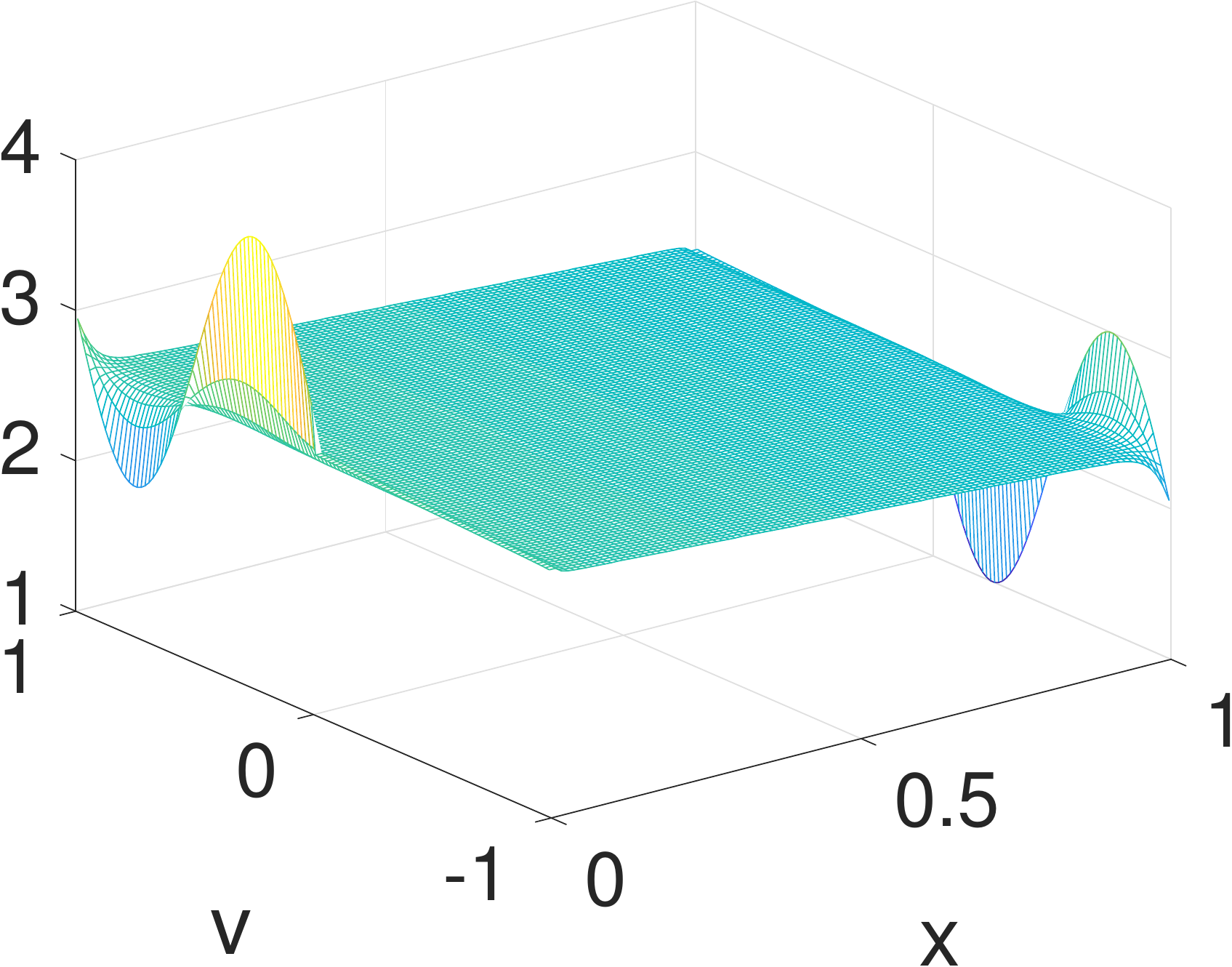}\\
	\caption{The two rows of plots are for $\vep=1$ and $\varepsilon=2^{-6}$ respectively. The leftmost column show a reference solution obtained with fine grids. The middle column and the rightmost column are solutions obtained from randomized reduced bases with $k_m=10$ and $k_m=50$ (for all $m$), respectively.}\label{fig:rte_global}
\end{figure}
\begin{figure}
	\centering
	\includegraphics[width=0.6\textwidth]{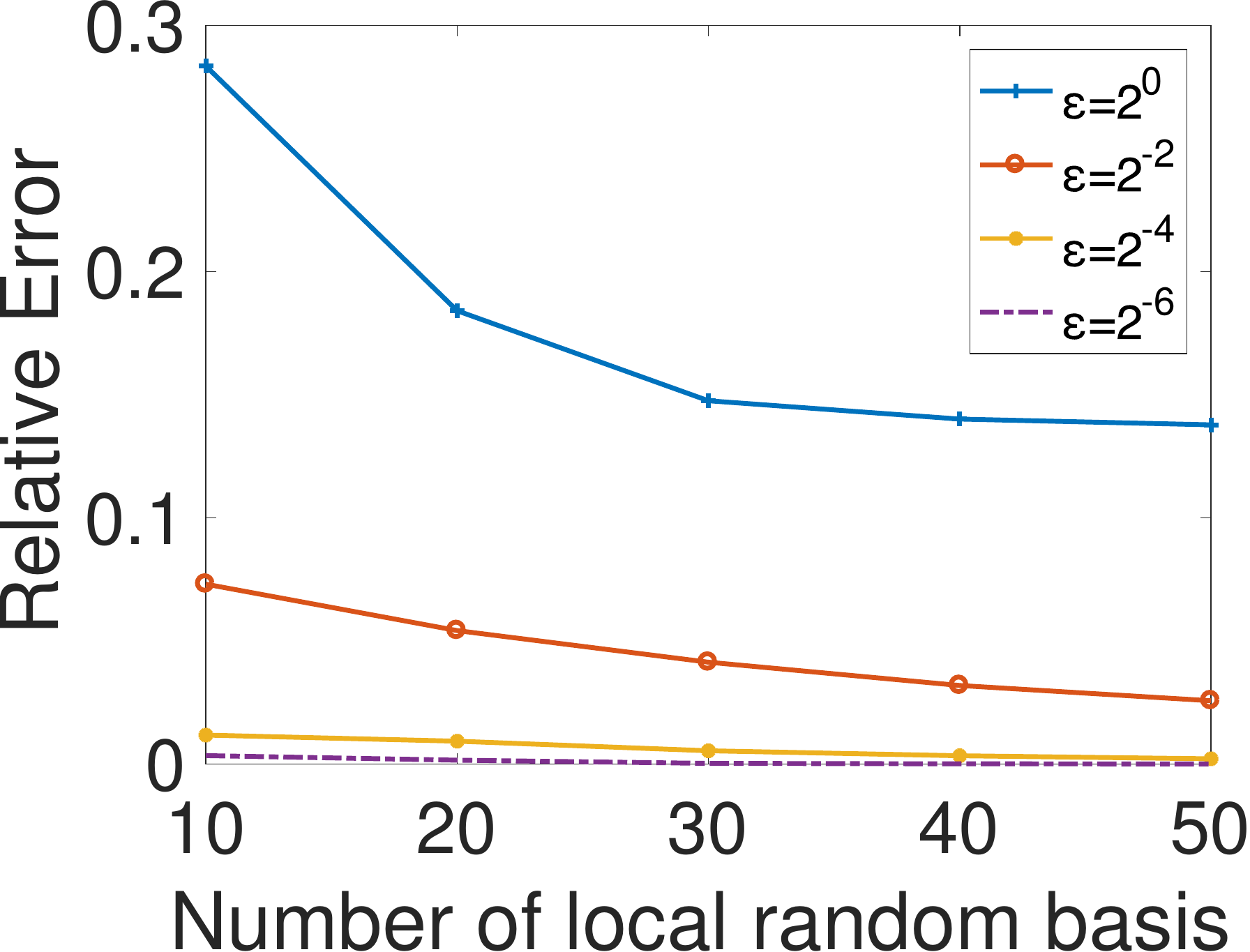}
	\caption{The global error as a function of $k_m$. As $k_m$, the number of random modes per patch increases, the relative error decreases. For fixed number of random modes, the relative error is better for small $\vep$.
		%	\sw{Really? It's not clear from this graph that the {\em rate}
		%	is better. It is clear that the relative error is better for
		%	small $\vep$.} ~\kc{I changed the last sentence.}
	}
	\label{fig:rte_global_error}
\end{figure}

\section{Example 2: elliptic equation with highly oscillatory media}
\label{sec:elliptic}

We now consider elliptic equations with oscillatory media on the
domain $\Kcal = [0, 1]^2$ with Dirichlet boundary conditions. The
problem is
\begin{align}\label{eqn:Elliptic}
\nabla_x\cdot\left(a\left(x,\frac{x}{\varepsilon}\right)\nabla_xu^\vep\right)  = 0\,, \quad & \text{in }\Kcal= [0,1]^2\,, \\
\label{eqn:elliptic_bdy}
u^\vep  = \phi(x)\,, \quad & \text{on } \Gamma = \partial \Kcal,
\end{align}
where the coefficient field $a = a(x, {x}/{\vep})$ is oscillatory
because of its explicit dependence on the fast variable
${x}/{\vep}$. ($\vep$ indicates the scale of oscillation in the
coefficient field.)

We solve \eqref{eqn:Elliptic} on a coarse mesh $\{(x_{m_1},
y_{m_2})\mid x_{m_1} = m_1 H, \;\; y_{m_2} = m_2 H\}$ with
$H=1/M$. The coarse mesh size $H$ is chosen independent of the small
parameter $\vep$.  The domain $\Kcal$ is decomposed into patches defined by
\begin{equation}
\Kcal = \bigcup_m \Kcal_{m}\,,\quad\text{with}\quad\Kcal_m = [ x_{m_1-1},x_{m_1}] \times[y_{m_2-1},y_{m_2}] \,,
\end{equation}
where $m=(m_1,m_2)$ is a multi-index. Two patches $\Kcal_m$ and
$\Kcal_n$ share boundaries if they are adjacent, and we define the
shared edge as follows:
\begin{equation*}
L_{mn} =  \Kcal_m\cap\Kcal_n \,.
\end{equation*}
%% \sw{I don't undestand the use of $^\circ$ here --- the definition
%%   makes sense without this. If $^\circ$ means ``interior'' then the
%%   definition makes no sense as $L_{mn}$ would then be always
%%   empty.}~\kc{Changed.}
Thus $L_{mn}$ is nontrivial only if $(m_1,m_2) = (n_1\pm 1,n_2)$ or
$(m_1,m_2) = (n_1,n_2\pm1)$; see~\cref{fig:dd_E}. Note too that
$L_{mn}=L_{nm}$.
\begin{figure}
	\centering
	\includegraphics[width = 0.6\textwidth]{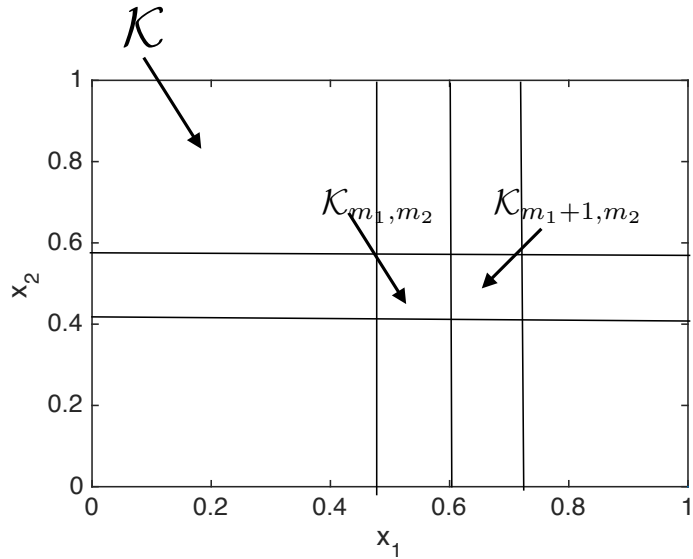}
	\caption{Domain $\Kcal$ is decomposed into patches, each defined by a multi-index $m=(m_1,m_2)$.}\label{fig:dd_E}
\end{figure}

\subsection{Full basis approach} 

\subsubsection*{Offline}
In the full-basis scheme, we prepare the local solution space
functions $b_{m,i}$ in the offline step by solving
\eqref{eqn:Elliptic} in every patch $\Kcal_m$ with boundary conditions
that are non-vanishing at one just grid point $i$ on the boundary of
the patch:
\begin{equation}\label{eqn:elliptic_b}
\begin{cases}
\nabla_x\cdot\left(a\left(x,\frac{x}{\varepsilon} \right)\nabla_xb_{m,i}\right) = 0\,, & \quad x \in \Kcal_m,\\
b_{m,i}  = \delta_i \,,  & \quad x\in \Gamma_m = \partial\Kcal_m \,,
\end{cases}
\end{equation}
where $\delta_i=1$ at the $i$-th boundary grid point of $\Gamma_m$
and is zero at all other grid points in $\Gamma_m$.  The solutions
$\{b_{m,i}\,, i= 1,\ldots,n_m\}$ span the space of local solutions with all possible boundary
conditions, and we assemble them into the local Green's matrix for
$\Kcal_m$:
\begin{equation} \label{eqn:lr3}
\Gmat^\vep_m = \left[b_{m,1}\,,\dotsc,b_{m,n_m} \right]\,.
\end{equation}
Note that to compute the basis functions, we use fine discretization with meshsize $h\ll \vep$, which leads to
$n_m = \mathcal{O}({1}/{\vep})$.
%since the number of boundary grid points depends on the microscopic mesh size, which scales as $1/\vep$.  
(Details of the fine mesh are discussed in~\cref{sec:num.exp2}.)

\subsubsection*{Online}
We write the global solution as a linear combination of all local
basis functions, with coefficients $c_{m,i}$:
\begin{equation}\label{eqn:dd_soln_elliptic}
u^\vep = \sum_mu^\vep_m = \sum_m\sum_{i=1}^{n_m} c_{m,i}b_{m,i}\,.
\end{equation}
The coefficients are determined by enforcing the following constraints:
\begin{itemize}
	\item[$\ast$]{Continuity across edges $L_{mn}$:} $u_m(L_{mn}) =
	u_{n}(L_{mn})$ and $a\partial_nu_m(L_{mn}) =
	-a\partial_nu_{n}(L_{mn})$ if $L_{mn}\neq\emptyset$, where
	$\partial_n$ denotes the outer normal derivative on the boundary;
	% ~\kc{I add a minus sign here and plus sign in
	% \eqref{eqn:elliptic_bdy_cond} since $\partial_n$ has opposite
	% direction at different side of $L_{mn}$.}
	\item[$\ast$]{Boundary condition on $\partial \Kcal$:}
	$u|_{\partial\Kcal} = \phi$.
\end{itemize}
%% \sw{The notation in the following discussion wasn't adequate; I
%%   changed it significantly. Please check.}
Denote by $\Mmat_{m,n}$ the matrix that maps $c_m$ to $u_m(L_{mn})$,
and by $\Wmat_{m,n}$ the matrix that maps $c_m$ to $a\partial_n
u_n(L_{mn})$, that is,
\begin{equation*}
\Mmat_{m,n}  c_m = u_m(L_{mn})\,,\quad \Wmat_{m,n}  c_m = a\partial_nu_m(L_{mn})\,.
\end{equation*}
(Note that $\Mmat_{m,n}$ is a submatrix of $\Gmat^\vep_m$.)
From the continuity condition, we have
\begin{equation}\label{eqn:elliptic_cont_cond}
\begin{cases}
\Mmat_{m,n} c_m - \Mmat_{n,m} c_{n}=0\,,\quad x\in L_{mn}\\
\Wmat_{m,n} c_{m} + \Wmat_{n,m}c_{n}=0\,,\quad x\in L_{mn}.
\end{cases}
\end{equation}
Similarly, we define by $\Mmat_{m,\text{ext}}$ the matrix that maps $c_m$
to the intersection of $\partial \Kcal_m$ with $\partial \Kcal$. From
the boundary condition, we have
\begin{equation}\label{eqn:elliptic_bdy_cond}
\Mmat_{m,\text{ext}} c_m = \phi \,,\quad x\in\partial\Kcal_m\cap\partial\Kcal\,.
\end{equation}

By assembling the conditions \eqref{eqn:elliptic_cont_cond}
and~\eqref{eqn:elliptic_bdy_cond} for all $m$ and $n$, and solving for
the coefficients $\{c_{m,i}\,,m=1\,,\dotsc,M\,,i = 1\,,\dotsc n_m\}$,
we obtain $u^{\vep}$ from \eqref{eqn:dd_soln_elliptic}. The linear
system has the form
\begin{equation}\label{eqn:elliptic_cond}
\Pmat c = d\,,
\end{equation}
with $d = [\phi,0]$, $c = [c_{m,i}]$ and $\Pmat$ is formed by the
collection of $\Mmat_{m,n}$, $\Wmat_{m,n}$, and $\Mmat_{m,\text{ext}}$.
% \jl{will be useful to write out the block matrix form}

\subsection{Reduced basis approach}
% \jl{define buffer domain and rewrite as \S4.2}  ~\kc{We move this part as a remark in \S 5.1, just as in \S 4.2.}
As in~\cref{sec:buffer_transport}, we define buffered patches
$\wt{\Kcal}_m$ such that $\Kcal_m \subset\subset \wt{\Kcal}_m$, and
solve a local problem on each buffered patch. When we restrict the
local solutions to $\Kcal_m$, we find that (as before) these solutions
lie approximately in a lower-dimensional space.  Similarly to
Equation~\eqref{eqn:elliptic_b}, we define the local problems as
follows:
\begin{equation} \label{eqn:ux7}
\begin{cases}
\nabla_x\cdot\left(a(x,\frac{x}{\varepsilon})\nabla_x\wt{b}_{m,i}\right) = 0\,, & \quad x \in \wt{\Kcal}_m, \\
\wt{b}_{m,i} |_{\partial\wt{\Kcal} _m} = \delta_i, &
\end{cases}
\end{equation}
then define the local solution space via the following Green's matrix:
\begin{equation} \label{eqn:lr2}
\wt{\Gmat}^\vep_m = \left[ \wt{b}_{m,1}|_{\Kcal_m},\ldots,\wt{b}_{m,\wt{n}_m}|_{\Kcal_m} \right]\,,
\end{equation}
Since $\spanop \wt{\Gmat}^\vep_m$ contains all local solutions, we
seek a good approximation to $\spanop \wt{\Gmat}^\vep_m$ for the
interior cells during the offline stage.  As shown in
~\cite{bebendorf2003existence}, and similarly to~\cref{sec:buffer_transport}, the matrix $\wt{\Gmat}^\vep_m$ is
low rank and can be compressed through random sampling.
% \sw{I added a superscript $\vep$ to $\Gmat$ as it seems to be needed
% below.}
We
solve \eqref{eqn:ux7} with the boundary condition $\delta_i$ replaced
by a function $w_i$ which takes on random values (specifically,
i.i.d. normal random variables) at the grid points of the boundary
$\partial \wt{\Kcal}_m$, that is,
% ~\kc{I use $\wt{r}_{m,i}$ for solution to
% \eqref{eqn:elliptic_random} and $r_{m,i}$ for its restriction on
% $\Kcal_m$. I also changed this in Section 4, see comment above.}
\begin{equation}\label{eqn:elliptic_random}
\begin{cases}
\nabla_x\cdot\left(a(x,\frac{x}{\varepsilon})\nabla_x\wt{r}_{m,i}\right) = 0\,, \quad x \in \wt{\Kcal}_m,\\
\wt{r}_{m,i} |_{\partial\wt{\Kcal}_m} = \omega_i.
\end{cases}
\end{equation}
We do this for $k_m$ choices of random boundary function $w_i$ and
assemble the local reduced Green's matrix from the restricted solutions
$r_{m,i}= \wt{r}_{m,i}|_{\Kcal_m}$, $i=1,2,\dotsc,k_m$:
\[
\Gmat^{\vep,r}_m = \left[r_{m,1}\,,\dotsc,r_{m,k_m} \right] = \wt{\Gmat}_{m}^{\vep} \left[\omega_{m,1}\,,\dotsc,\omega_{m,k_m} \right]\big|_{\Kcal_m} \,.
\]
As done in the full basis approach, the coefficients are determined in the online step, namely, we express the solution as
\begin{equation*}
u^\vep = \sum_mu^\vep_m \approx \sum_m\sum_{i=1}^{k_m} \tilde{c}_{m,i}r_{m,i}\,.
\end{equation*}
and determine the coefficients $\tilde{c}_{m,i}$ by imposing the continuity conditions in the interior boundaries and and boundary conditions on the exterior boundary. 

Similar to the full basis approach, denote $\wt\Mmat_{m,n}$ and $\wt{\Wmat}_{m,n}$ the matrices that map $\tilde{c}_m$ to $u_m(L_{mn})$ and $a\partial_n u_n(L_{mn})$ respectively, that is,
\begin{equation*}
\wt{\Mmat}_{m,n}  \tilde{c}_m = u_m(L_{mn})\,,\quad \wt{\Wmat}_{m,n}  \tilde{c}_m = a\partial_nu_m(L_{mn})\,.
\end{equation*}
By imposing the continuity condition and the exterior boundary
condition, we obtain
\begin{equation}\label{eqn:elliptic_cond_tilde}
\begin{cases}
\wt\Mmat_{m,n} \tilde{c}_m - \wt\Mmat_{n,m} \tilde{c}_{n}=0\,,\quad x\in L_{mn}\\
\wt\Wmat_{m,n} \tilde{c}_{m} + \wt\Wmat_{n,m}\tilde{c}_{n}=0\,,\quad x\in L_{mn}
\end{cases}\,, \quad\text{and}\quad \wt\Mmat_{m,\text{ext}} c_m = \phi \,,\quad x\in\partial\Kcal_m\cap\partial\Kcal\,.
\end{equation}
Assembling the equations, we obtain:
\[
\wt{\Pmat}\tilde{c} = d\,.
\]

However, since the number of coefficients in the reduced basis
approach is significantly smaller than that in the full basis approach
($k_m\ll n_m$ in every patch $\Kcal_m$), while the number of
continuity condition and the boundary condition is not changed, the
system is overdetermined. We thus consider the least-squares solution,
that is:
\begin{equation}\label{eqn:elliptic_cond_reduced}
\tilde{c} = \text{argmin}_e\|\wt{\Pmat} e-d\|_2\quad \Rightarrow\quad \tilde{c} = (\wt{\Pmat}^\top\wt{\Pmat})^{-1}\wt{\Pmat}^\top d\,.
\end{equation}
Alternatively, we could enforce the boundary conditions exactly and
relax only the continuity condition, as in the following constrained
least-squares formulation: such that:
\begin{equation*}
\min_c\sum_{m,n}\|\wt\Mmat_{m,n} c_m - \wt\Mmat_{n,m} c_{n}\|^2_{2,mn}+\|\wt\Wmat_{m,n} c_{m} + \wt\Wmat_{n,m}c_{n}\|^2_{2,mn}\,,\quad\text{such that}\quad \wt\Mmat_{m,\text{ext}} c_m = \phi\,.
\end{equation*}
Here $\|\cdot\|_{2,mn}$ denotes $L_2$ norm confined on $x\in
L_{mn}$. 
If we assume a uniform mesh with $n_m$  constant for all patches, then matrix $\wt{\Pmat}$ is of size $M_p\times N_p$ where $M_p = \tfrac12 \sqrt{M} (\sqrt{M}+1) n_m$ and  $N_p = \sum_{m=1}^M k_m$. Similar to the case of RTE, the typical time complexity for linear regression problem \eqref{eqn:elliptic_cond_reduced} is of order $\mathcal{O}(N_p^2(M_p+N_p))$. 
Numerically, we obtain satisfactory results from
\eqref{eqn:elliptic_cond_reduced}, which we present  in the
next subsection.

\subsection{Numerical test} \label{sec:num.exp2}

We set the domain to be $\Kcal=[0,1]^2$ and define the media as
follows, for $x =(x_1,x_2) \in \Kcal$:
\begin{equation*}\label{eqn:media}
a\left(x,\frac{x}{\vep} \right)= 2 + \sin(2\pi x_1) \cos(2\pi x_2) + \frac{2 + 1.8\sin(\frac{2\pi x_1}{\vep})}{2 + 1.8\cos( \frac{2\pi x_2}{\vep})} + \frac{2 + \sin(\frac{2\pi x_2}{\vep})}{2 + 1.8\cos(\frac{2\pi x_1}{\vep})}\,.
% \quad x=(x_1,x_2)\in[-1,1]^2\,.
\end{equation*}
For the domain decomposition we set $M = 5$ (for a total of $25$
patches), and each local patch is further divided into a $20$ by $20$
fine mesh so that the mesh parameter $h= 0.01$ can resolve
the smallest scales $\vep = 2^{-4}$.  A complete basis on each patch is formed from $n_m=80$ basis functions. These  functions are computed from a standard finite element $P_1$ method with bilinear nodal basis. The buffered patch
$\wt{\Kcal}_m$ is set to be a square concentric with $\Kcal_m$ but
with all sides twice as long.~\cref{fig:media} illustrates the
setup, for $\vep = 2^{-4}$.
\begin{figure}
	\includegraphics[width=0.45\textwidth]{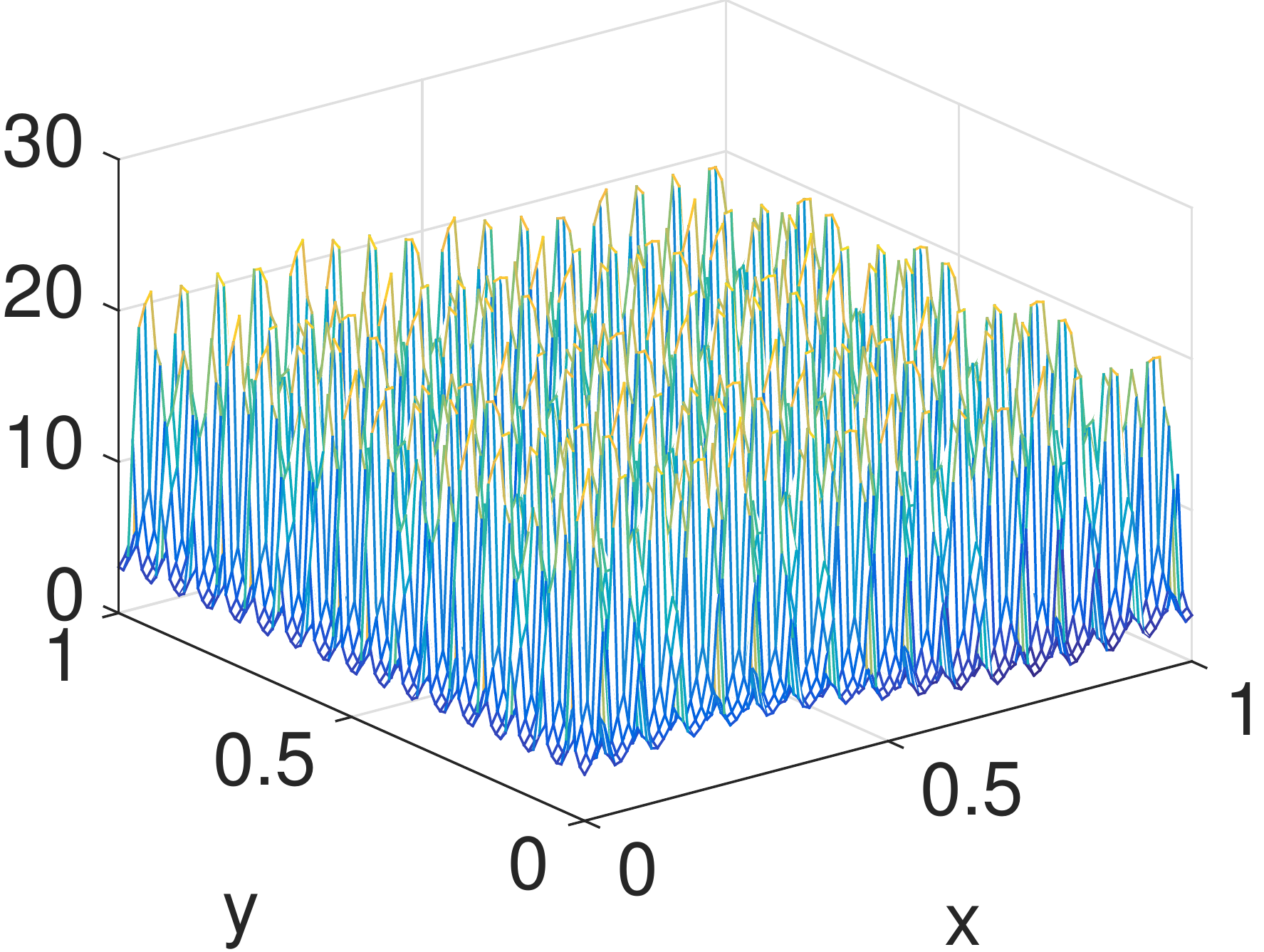}
	\includegraphics[width=0.45\textwidth]{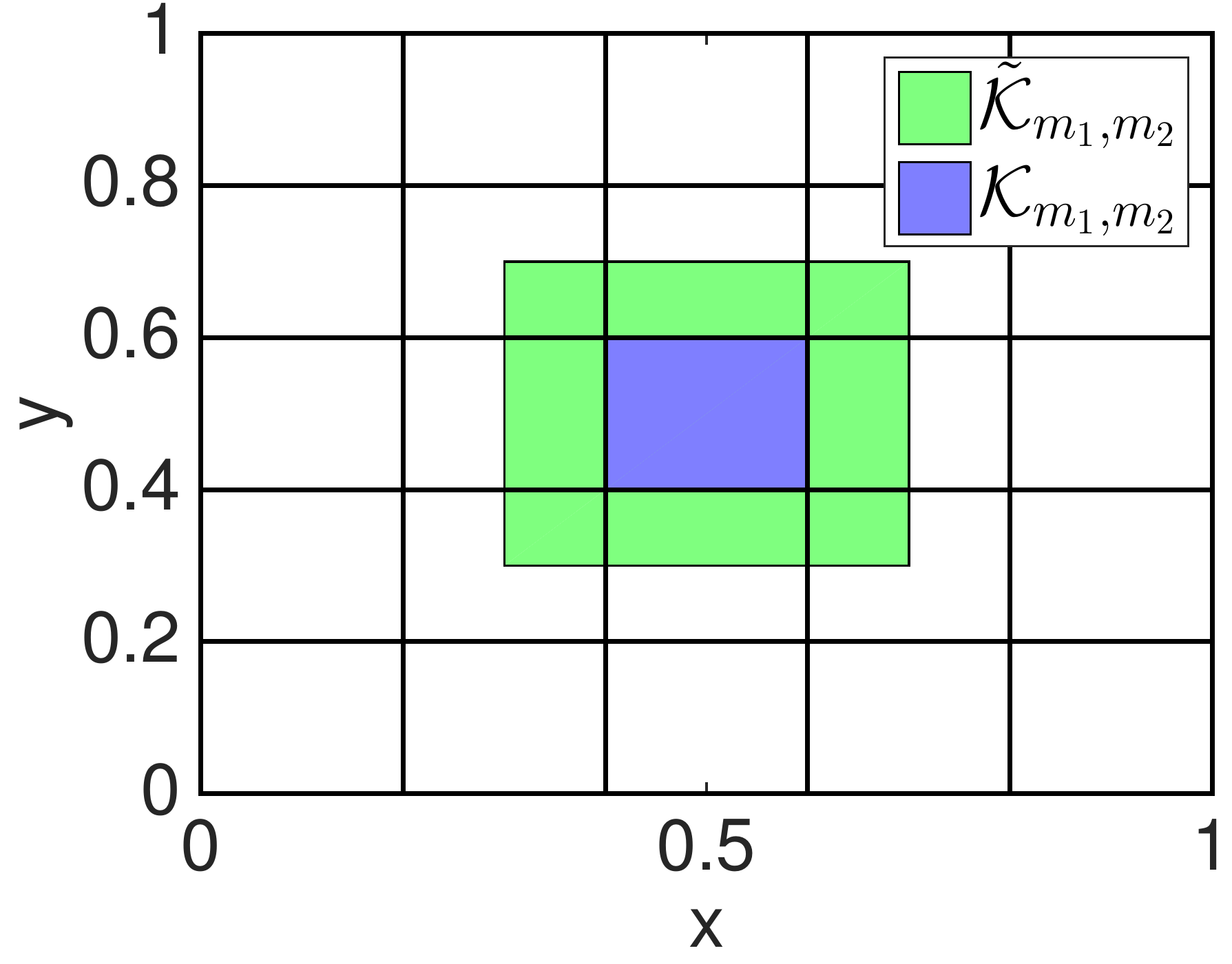}
	\caption{Left: Media used in elliptic equation~\eqref{eqn:Elliptic}. Right: illustration of buffered domain decomposition}\label{fig:media}
\end{figure}

\subsubsection{Local test}
In~\cref{fig:elliptic_local} we show the rank of the Green's
matrices $\Gmat^\vep_{2,2}$ and $\wt{\Gmat}^\vep_{2,2}$ (defined by
\eqref{eqn:lr3} and \eqref{eqn:lr2}, respectively) for the $(2,2)$
patch, with $\vep=2^{-4}$. Use of buffers yields rapid decays in the
singular values of $\wt{\Gmat}^\vep_{2,2}$. We then define the
relative error between $\Gmat^{\vep,r}_{2,2}$ and $\Gmat^{\vep}_{2,2}$
as follows:
\[
\text{error} = \frac{\|\wt{\Gmat}_{2,2}^\vep - \Qmat\Qmat^\top \wt{\Gmat}_{2,2}^\vep \|_2}{\| \wt{\Gmat}_{2,2}^\vep \|_2} \,, \quad \text{with} \quad \Gmat_{2,2}^{\vep,r} = \Qmat\mathsf{R}\,,
\]
where $\Qmat$ is obtained from $QR$ decomposition of
$\Gmat^{\vep,r}_{2,2}$. We see in~\cref{fig:elliptic_local} that
the relative error decays exponentially fast as $k_m$ increases.
%% \sw{Don't fully understand this sentence, also see my comment in
%%   the caption.} ~\kc{I changed a lot in above sentence to explain
%%   the relative error.}
In~\cref{fig:elliptic_local_modes}, we plot the first three left
singular vectors of $\wt{\Gmat}^{\vep}_{2,2}$ and their projections
onto $\spanop \{\Gmat^{\vep,r}_{2,2}\}$ with $k_{2,2}=6$. This plot
shows that, visually, $\spanop \{ \Gmat^{\vep,r}_{2,2}\}$ captures
well the leading singular vectors of the full-basis Green's matrix.
%% \sw{Nope, I don't understand
%%   this sentence.  What it looks like is: top row is the first three
%%   singular vectors of $\Gmat^{\vep}$, and bottom row is the first
%%   three singular vectors of $\{\Gmat^{\vep,r}\}$ with $k_{2,2}=6$. I
%%   don't understand what is meant by ``projection.''}
% \kc{Do we compare it to MsFEM here?}

\begin{figure}
	\includegraphics[height = 0.19\textheight,width=0.45\textwidth]{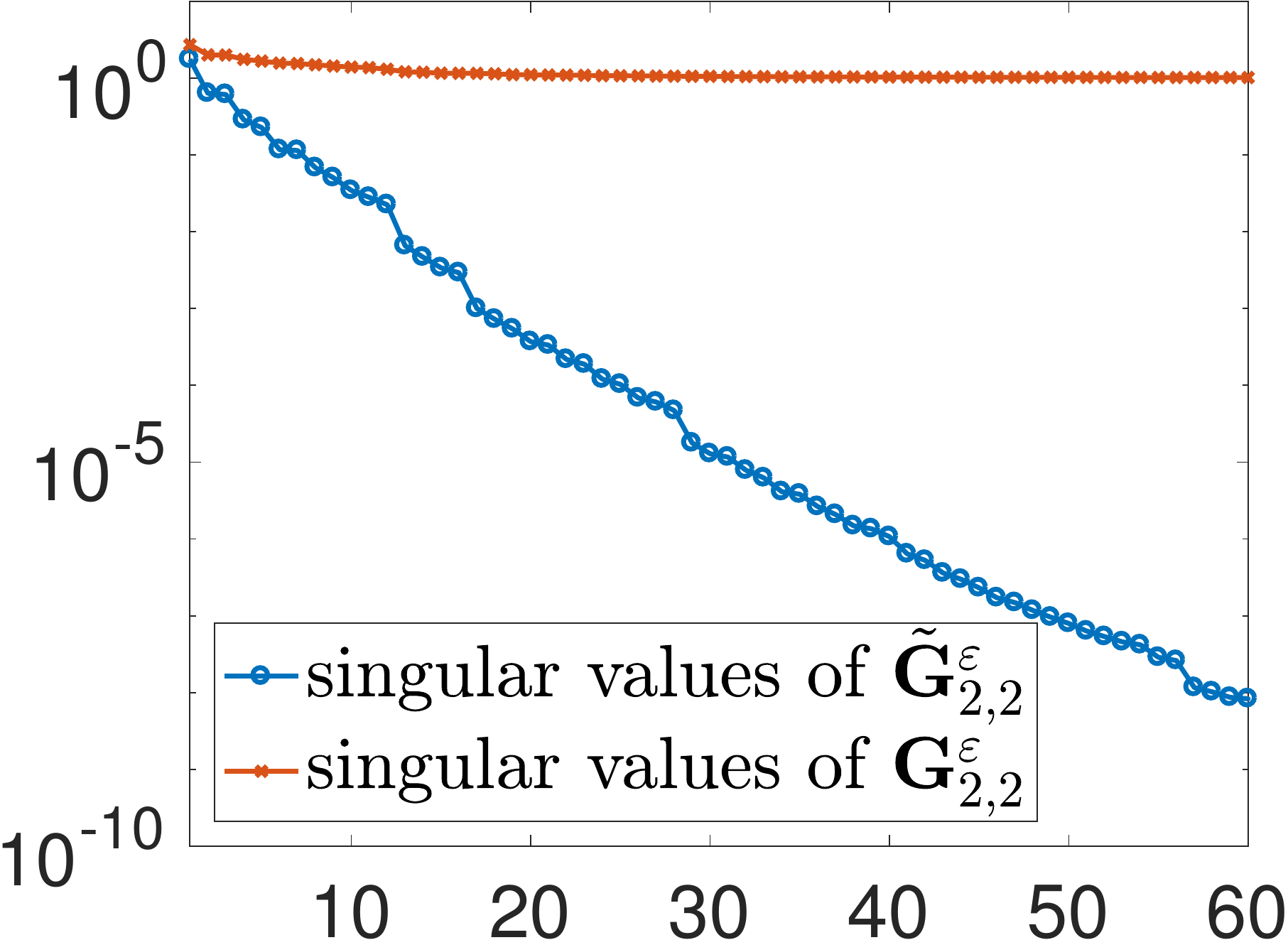}
	\includegraphics[height = 0.2\textheight,width=0.45\textwidth]{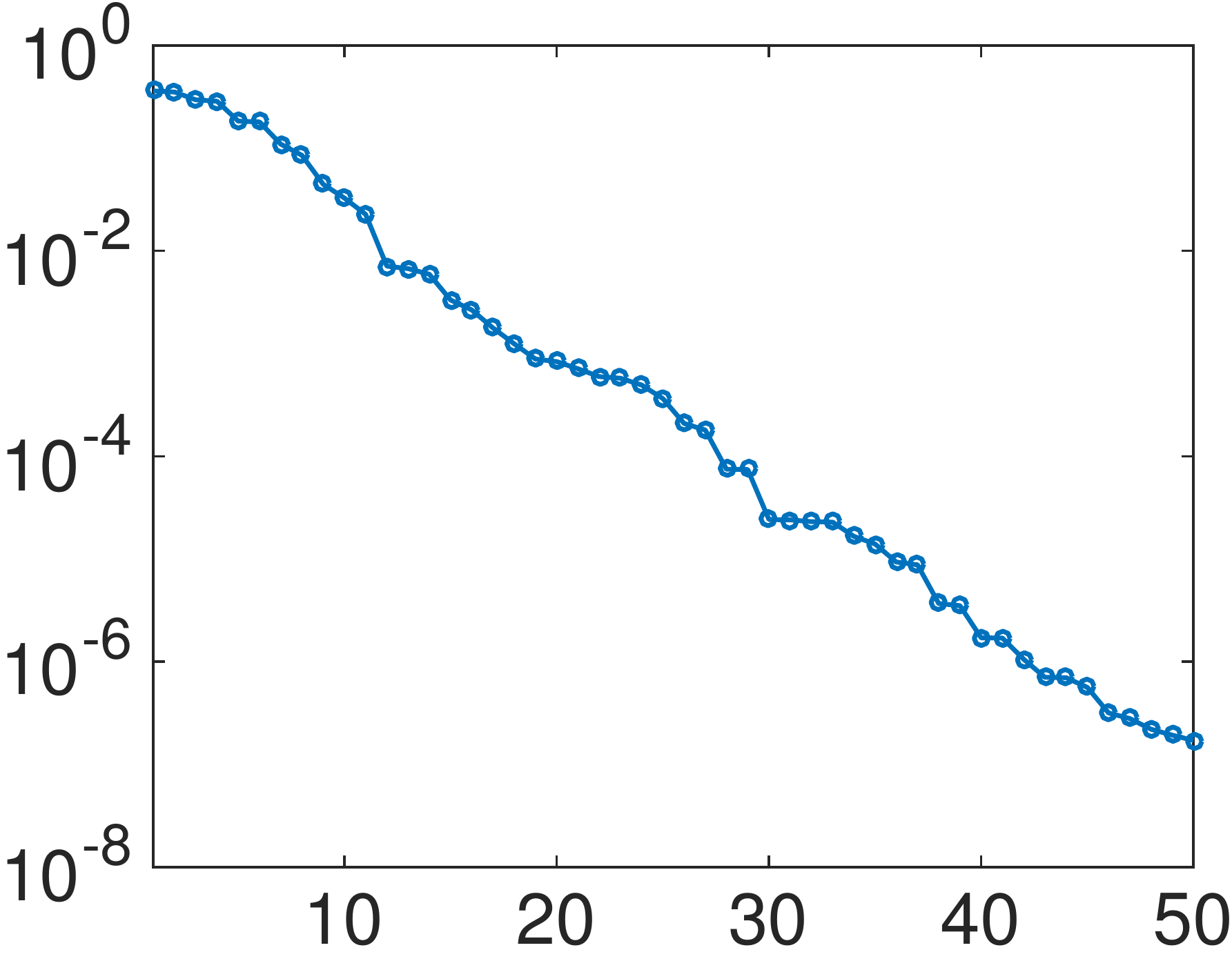}
	\caption{The plot on the left shows the singular values of
		$\Gmat^{\vep}_{2,2}$ (from \eqref{eqn:lr3}) and
		$\wt{\Gmat}^\vep_{2,2}$ (from \eqref{eqn:lr2}), with $\vep =
		2^{-4}$. Use of the buffer zone in the calculation of
		$\wt{\Gmat}^\vep_{2,2}$ causes fast decay of singular
		values, making this matrix approximately low-rank.  The plot
		on the right panel shows relative error between
		$\Gmat_{2,2}^{\vep,r}$ and $\Gmat_{2,2}^\vep$ as we increase
		the number of random modes $k_m$ from 1 to 50.}
	\label{fig:elliptic_local}
\end{figure}

\begin{figure}	
	\includegraphics[width=0.3\textwidth]{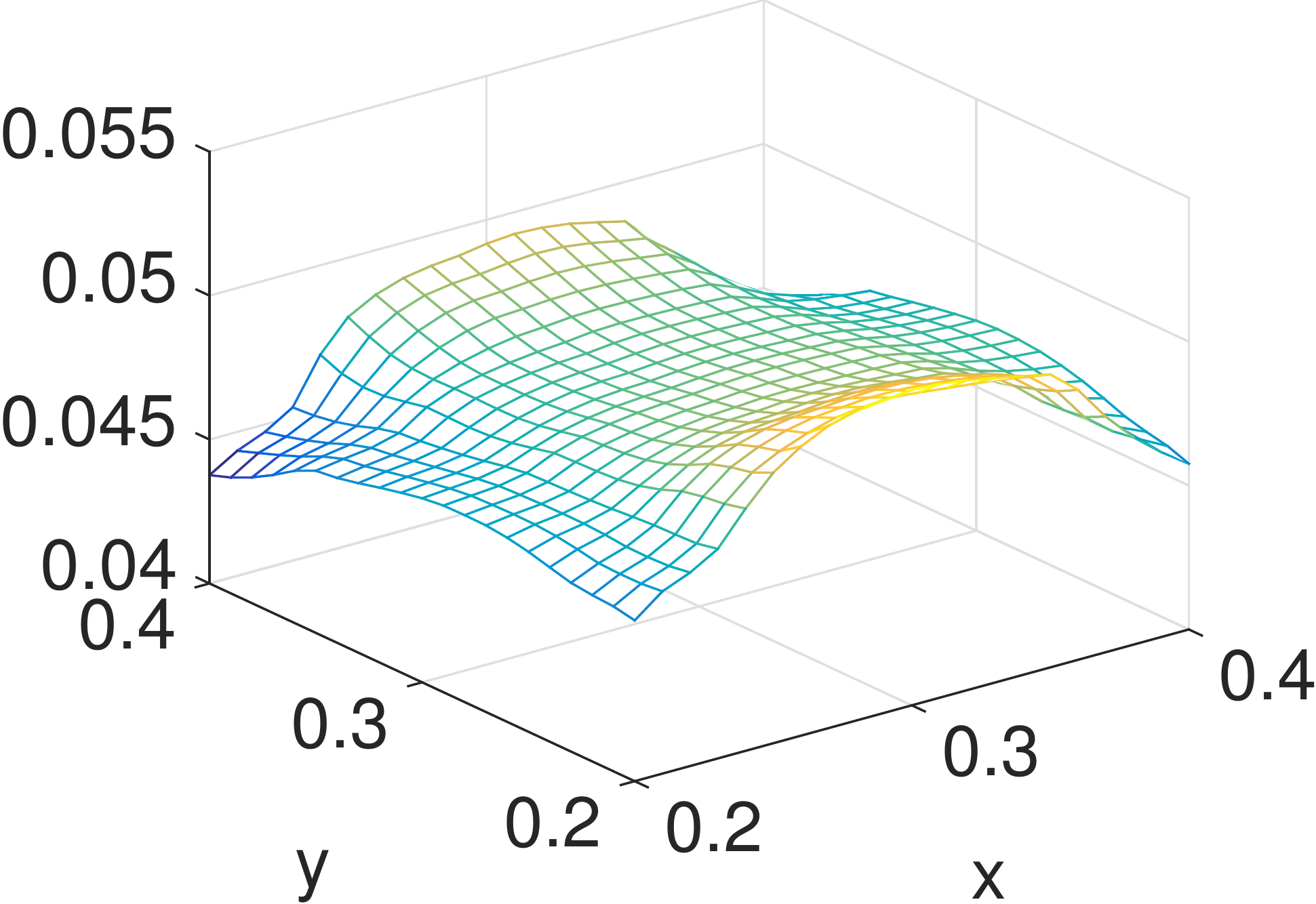}
	\includegraphics[width=0.3\textwidth]{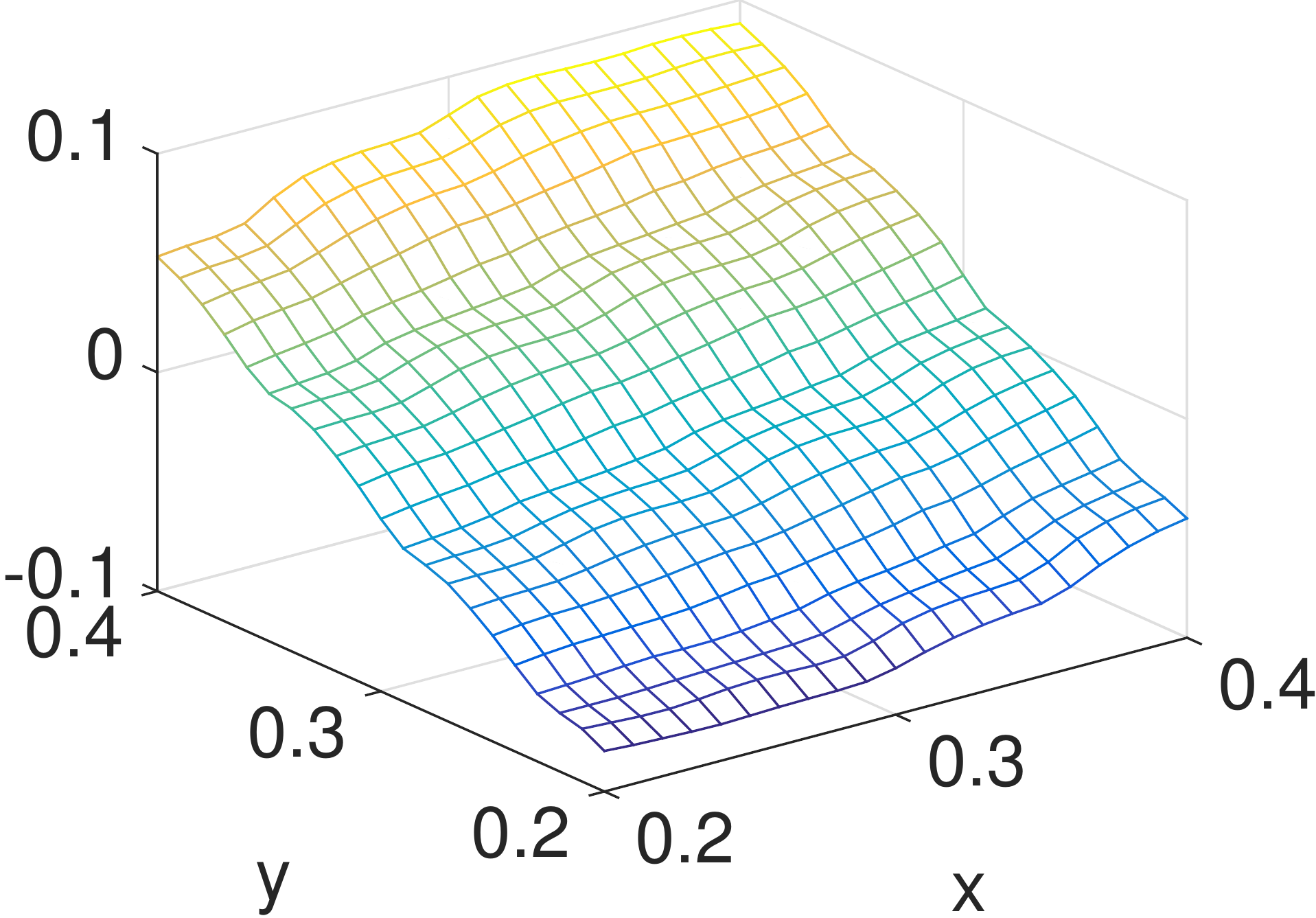}
	\includegraphics[width=0.3\textwidth]{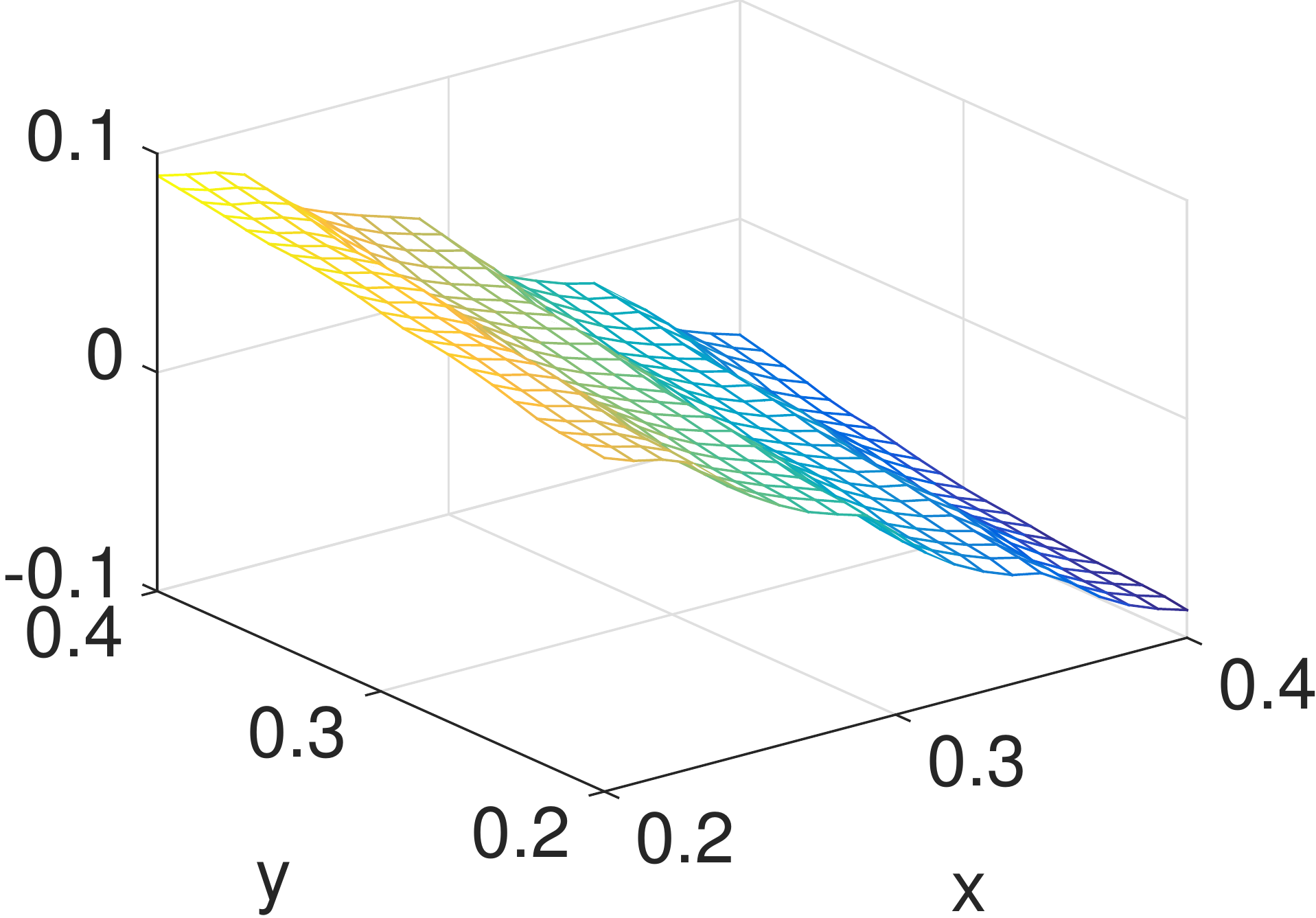}\\
	\includegraphics[width=0.3\textwidth]{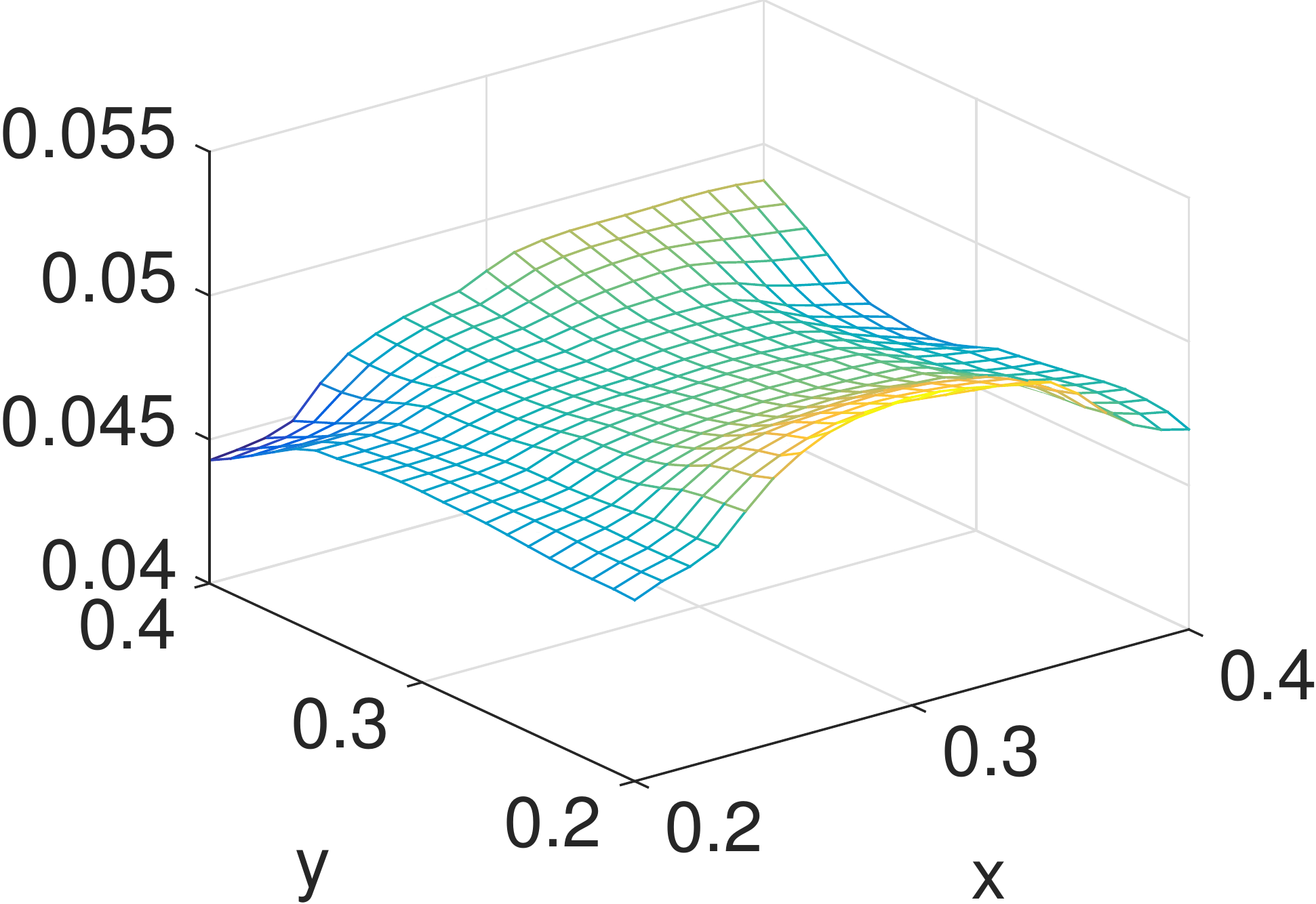}
	\includegraphics[width=0.3\textwidth]{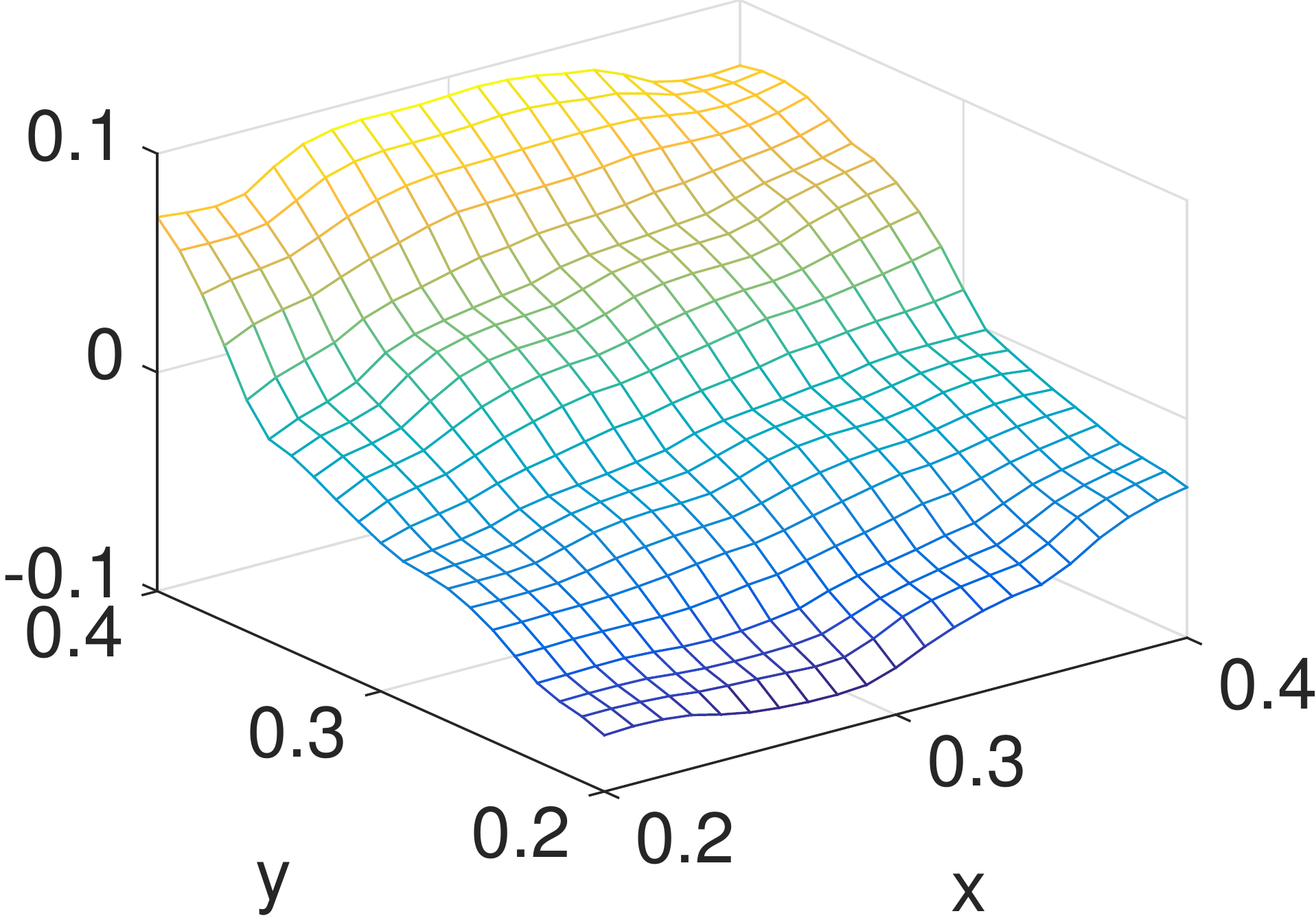}
	\includegraphics[width=0.3\textwidth]{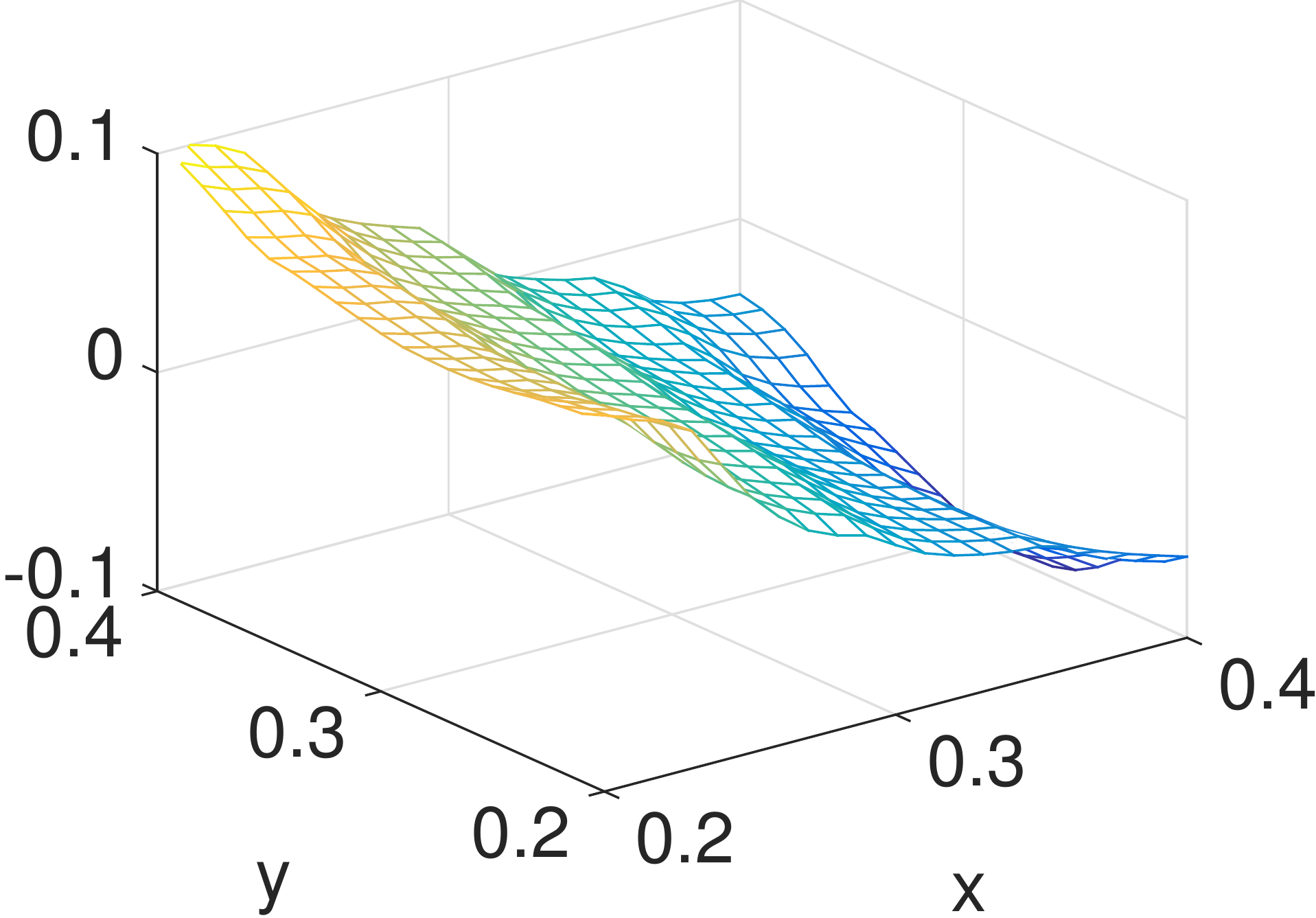}\\
	\caption{The first row shows the first three singular vectors
		of $\wt\Gmat^\vep_{2,2}$ and the second row shows projection
		of them onto $\spanop \{\Gmat^{\vep,r}_{2,2}\}$ with
		$k_{2,2}=6$. Visually, six random sampled basis are enough
		to capture the leading modes from the full-basis Green's
		matrix.
		%~\ql{Ke: could you adjust the $z$ coordinate so all plots have the same scales.}}\label{fig:elliptic_local_modes
		%	\kc{the relative error here is not good for $k_{2,2} = 6$, for random case it's 0.0079, 0.2879,0.2633 and for MsFEM it's 0.0231, 0.0537, 0.0267, should we delete this part and only compare with MsFEM in \cref{fig:compare_msfem}? We can find better oscillatory media (add slow variable in modulation) for comparison though.}
	}
	\label{fig:elliptic_local_modes}
\end{figure}

\subsubsection{Global test}
In the global test, the boundary condition is the sine function over
the boundary $\partial\Kcal$.  Equation~\eqref{eqn:elliptic_cond} is
computed with $\Gmat^{\vep}_m$ for the reference solution $u_{\text{ref}}$,
and~\eqref{eqn:elliptic_cond_reduced} is computed for the approximate
solution $u_{\text{approx}}$.~\cref{fig:elliptic_global} shows the reference
solution $u_{\text{ref}}$ along with the approximated solutions $u_{\text{approx}}$ obtained using $k_m=10$ and
$k_m=50$, respectively.  The decay in relative error 
\[
\text{relative error} = \frac{\| u_\text{ref} - u_{\text{approx}}\|_2}{\| u_{\text{ref}} \|_2}
\]
as a function of $k_m$ is plotted in~\cref{fig:elliptic_global_error}.
\begin{figure}
	\includegraphics[width=0.3\textwidth]{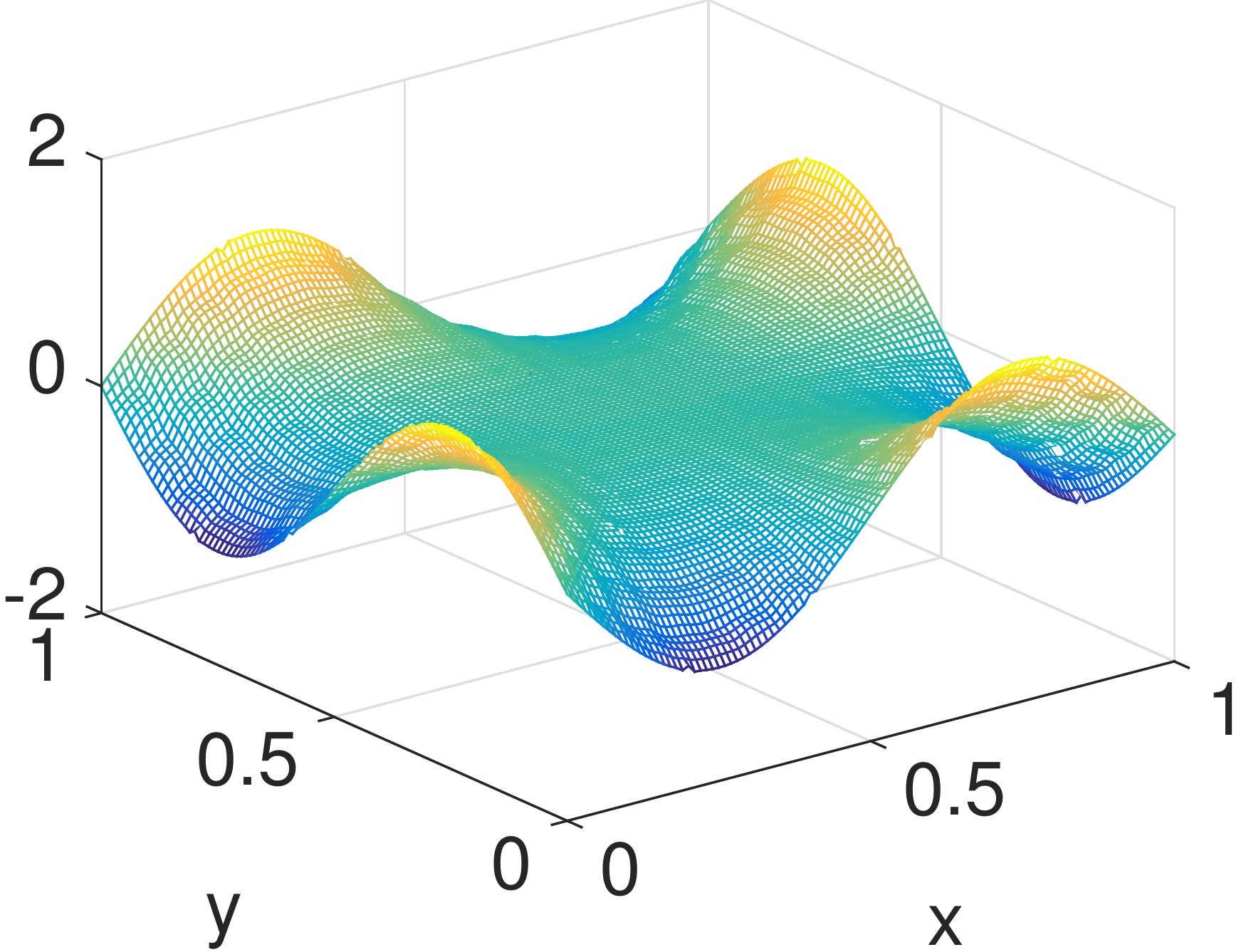}
	\includegraphics[width=0.3\textwidth]{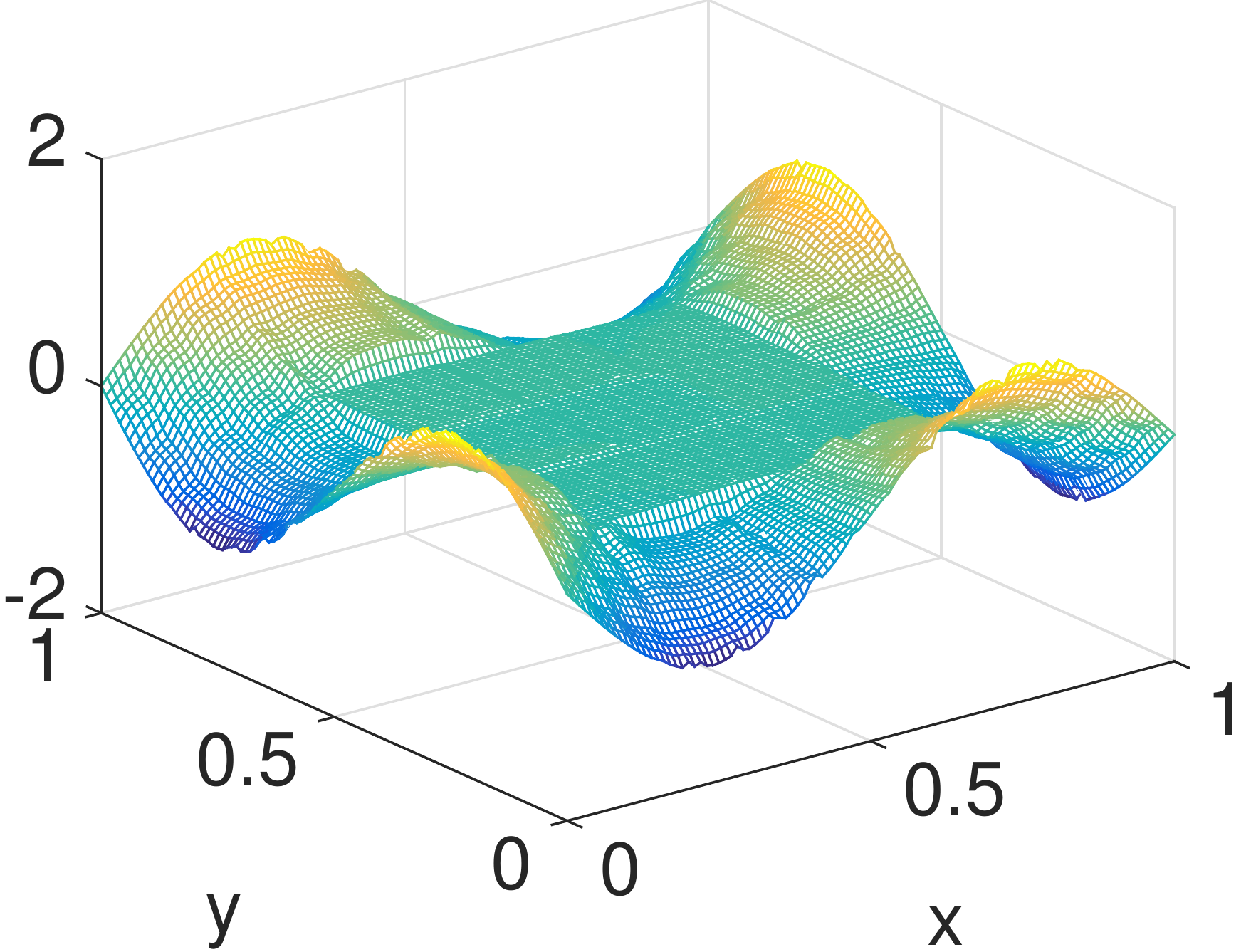}
	\includegraphics[width=0.3\textwidth]{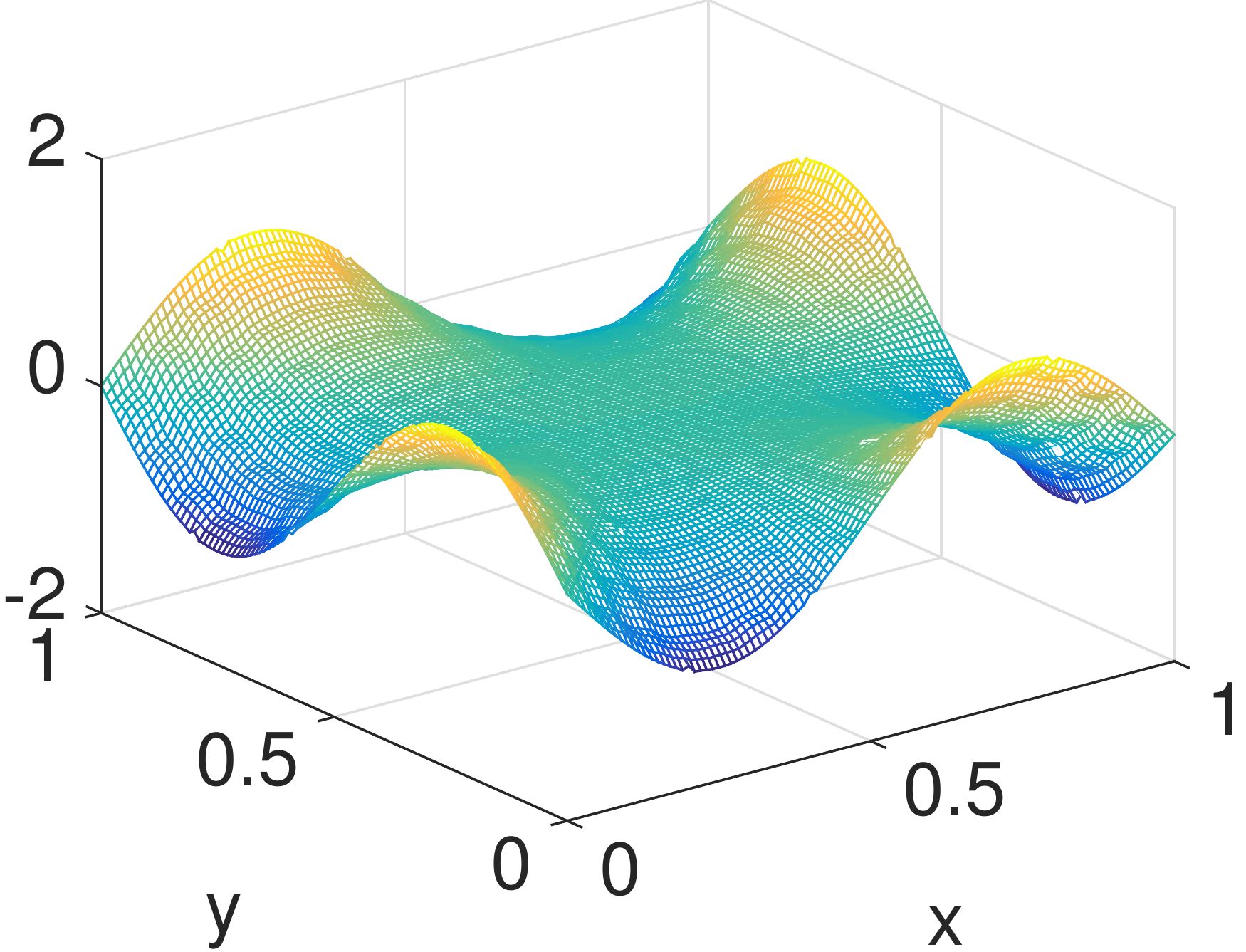}\\
	\caption{Computed solutions. Left panel shows the reference
		solution obtained with fine grids. Middle panel and right
		panel show solutions obtained from \eqref{eqn:rte_random},
		\eqref{eq:gb2}, \eqref{eq:gb3} with $k_m=10$ and $k_m=50$ (for all $m$),
		respectively.}
	\label{fig:elliptic_global}
\end{figure}
\begin{figure}
	\centering
	\includegraphics[width=0.6\textwidth]{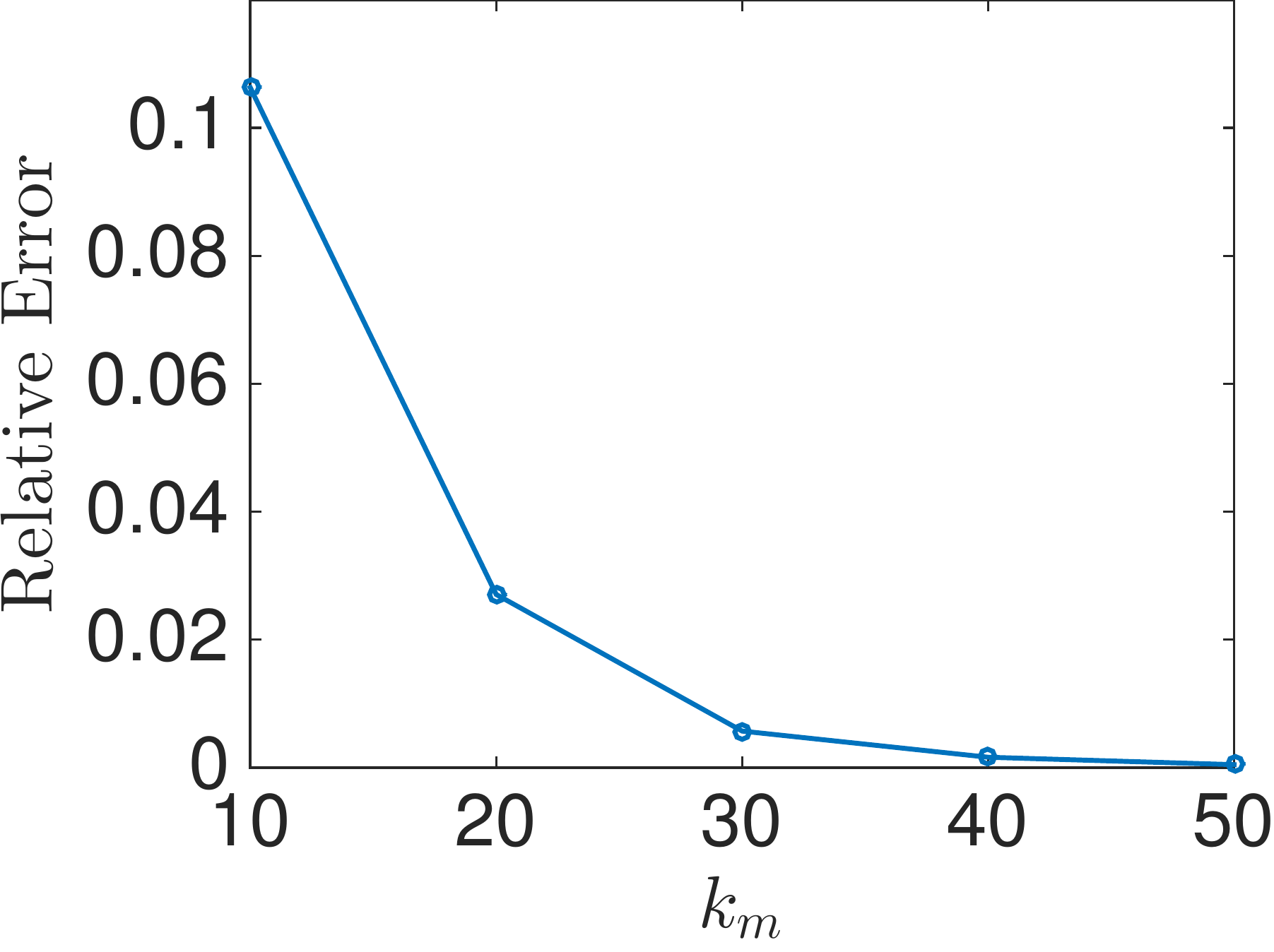}
	\caption{The global error as a function of number of random nodes per patch $k_m$. Note the rapid decay of error as $k_m$ increases.}\label{fig:elliptic_global_error}
\end{figure}
% \textcolor{blue}{
% \begin{remark}
%   We remark that local basis functions still need to be computed on
%   fine grids that resolve $\varepsilon$, and the computation could be
%   heavy for small $\varepsilon$. In contrast, for elliptic equation
%   specifically, when the media has structures (e.g., periodic in the
%   second variable), HMM is a method that quickly provide local
%   effective media with its cost independent of $\varepsilon$. This is
%   a method that could be incorporated in this framework that we
%   provide. But we leave it to future investigation. \jl{question as in the email}
% \end{remark}
% }

\subsection{Comparison with MsFEM and GMsFEM}
  A number of successful existing numerical
  homogenization methods share with our proposed method the property
  that that optimal basis functions are constructed offline.  MsFEM
  (Multiscale Finite Element Method)~\cite{HW97} and GMsFEM
  (Generalized MsFEM)~\cite{EFENDIEV2013116} have been used with
  success in many examples and with excellent numerical
  performance. MsFEM builds four basis functions by solving the local
  equation for $a$-harmonic functions that set $1$ at the four nodal
  points, while GMsFEM, prepares a full list of Green's functions over
  the subdomain and select the optimal ones according to a carefully
  designed spectral criterion (that translates into a generalized
  eigenvalue problem). On the theoretical level, MsFEM has been shown
  to have good convergence (see \cite{HWC99} for 
  periodic media), and the theory for GMsFEM can be found
  in~\cite{EFENDIEV2013116}. In this subsection
  we compare our methods with these two approaches, for a  challenging
  example in which the media contains both multiscale
  structures and high contrasts:
\begin{equation*}
a=1 + 1000 \, \mathbf{1}_S(x,y)\,,\quad S = \{(x,y)\in [0,1]^2:(x\cos(100\sqrt{(x-0.5)^2+(y-0.5)^2}))\leq y-0.5 \}\,.
\end{equation*} 
We plot the media in \cref{fig:HC_media}, noting that our comparison
is imperfect because the analytical result for MsFEM assumes
periodicity. Upon dividing the domain into fine
  mesh with $h=\frac{1}{100}$ and coarse mesh with $H=\frac{1}{5}$, we
  investigate the behavior of three different methods on the subdomain
  $\mathcal{K}_{2,2}$. We compute the reference optimal basis function
  by first looping over the boundary to build the
  entire Green's function list, then performing SVD. 
  \cref{fig:compare_msfem} shows that the
  random sampling method (using merely $6$ samples) can quickly
  capture the three leading basis functions and gives a higher
  accuracy, in comparison with MsFEM. In~\cref{tbl:CPUtime}, we report
  the CPU time needed for the three methods (MsFEM, GMsFEM and random
  sampling) vs the reference solution computed directly from
  performing SVD, and report the relative error in capturing the first
  three basis functions. Here the relative error is defined by:
 \begin{equation}\label{eqn:re_error}
 	\text{Error} = \frac{\| (\Imat - \Qmat_k \Qmat_k^\top) \Umat_3\|_2 }{\| \Umat_3 \|_2}\,, \quad e_i = \frac{\| (\Imat - \Qmat_k \Qmat_k^\top) \mathsf{u}_i\|_2 }{\| \mathsf{u}_i \|_2}
 \end{equation}
 where $\Qmat_k$ collects the orthonormal first $k$-basis constructed via different methods and $\Umat_n = [\mathsf{u}_1,\mathsf{u}_2,\ldots,\mathsf{u}_n]$ collects the first $n$ optimal basis functions $\mathsf{u}_i$. It is clear that GMsFEM is rather expensive while MsFEM is the cheapest of the three approaches. In terms of the error, random sampling strategies performs much better than MsFEM and similar to GMsFEM. We note that GMsFEM selects basis functions according to a spectral method reflected via a generalized eigenvalue problem. Since it has a different definition for ``optimality'', the comparison is not truly fair.

\begin{figure}
	\centering
	\includegraphics[width=0.6\textwidth]{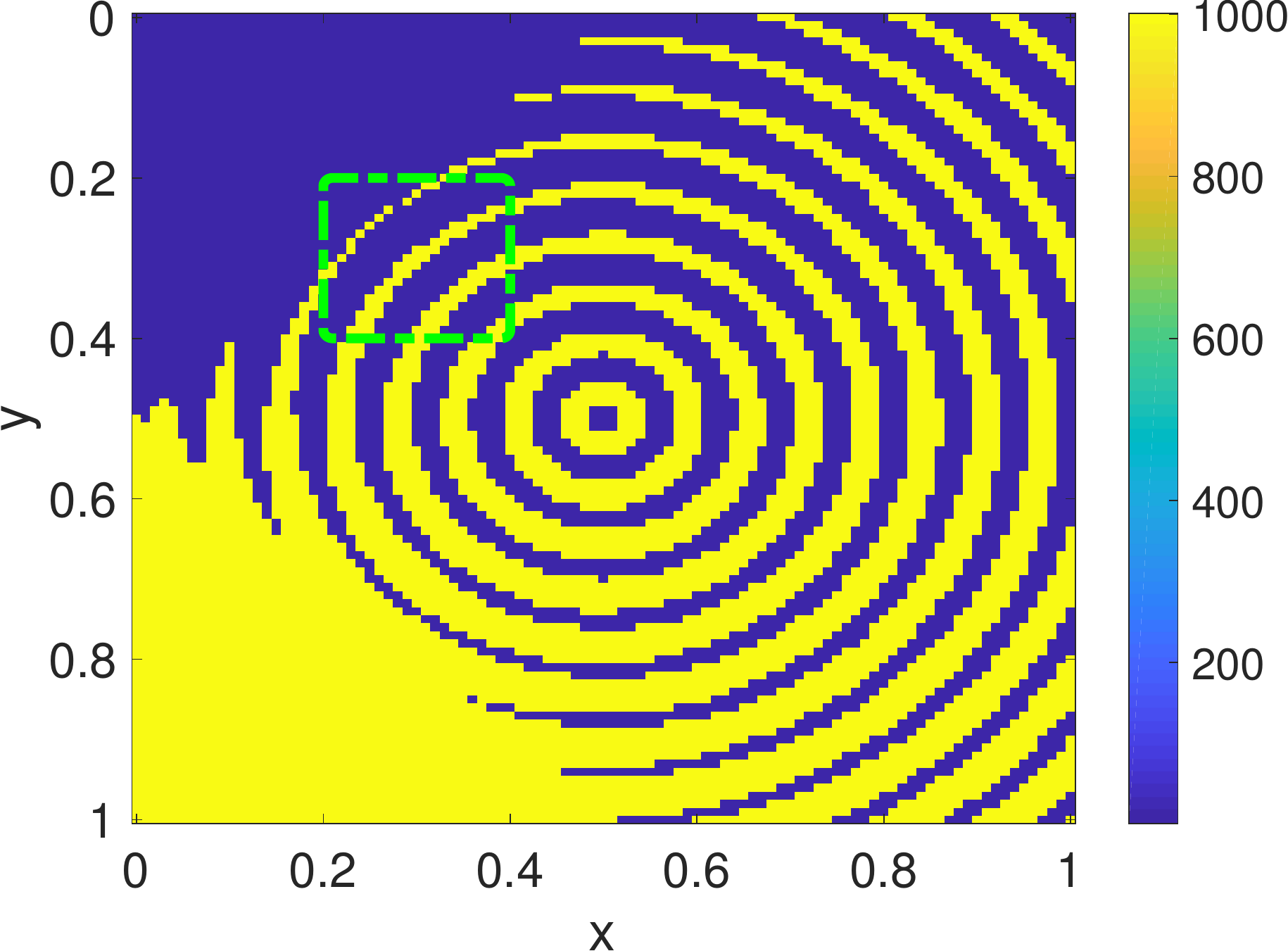}
	\caption{High contrast media with yellow part indicating $a\left(x,\frac{x}{\vep} \right)=1000$ and blue part indicating $a\left(x,\frac{x}{\vep} \right)=1$. The green box shows local patch $\Kcal_{2,2}$.} 
	\label{fig:HC_media}
\end{figure}

\begin{figure}
	\includegraphics[width=0.3\textwidth]{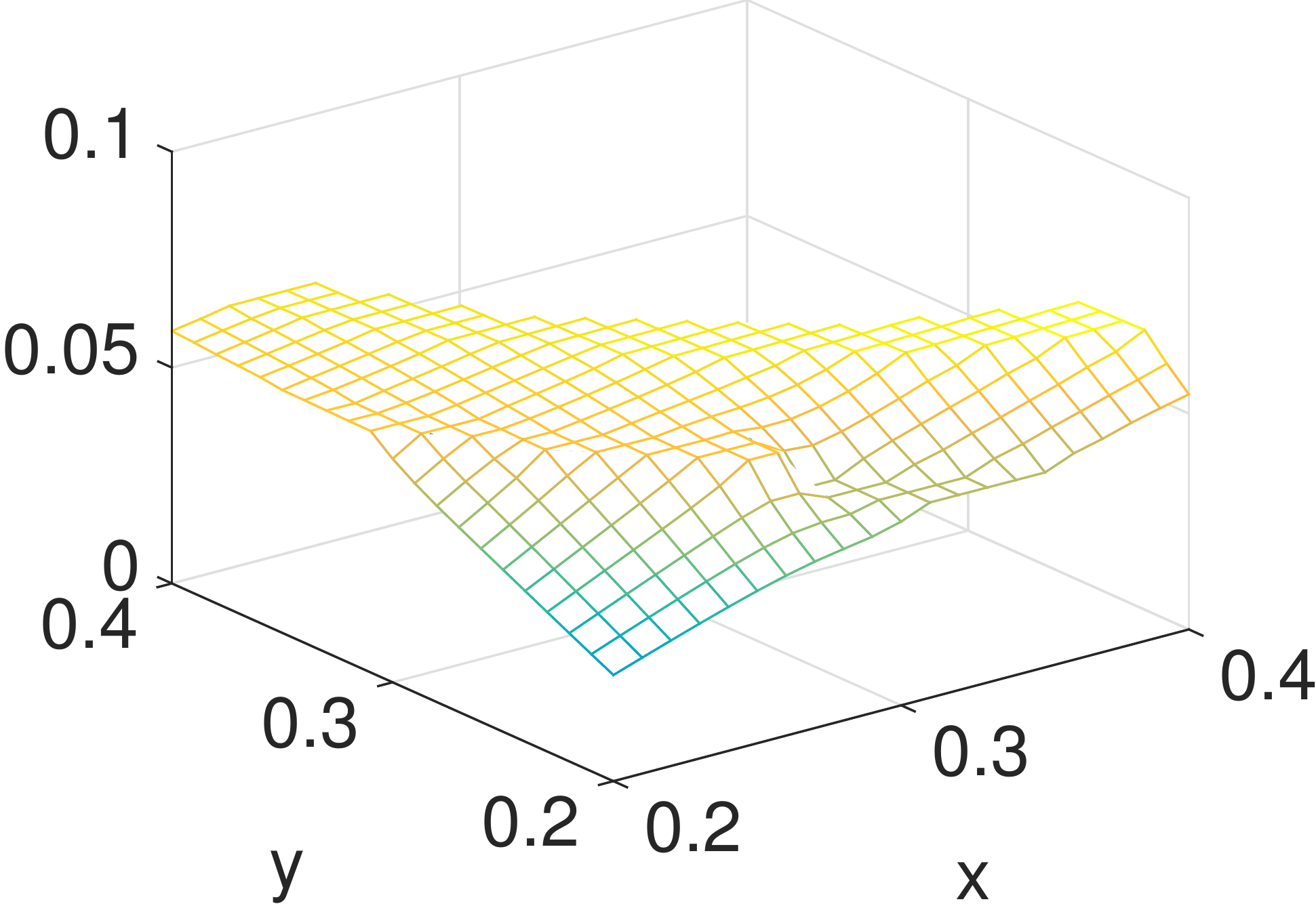}
	\includegraphics[width=0.3\textwidth]{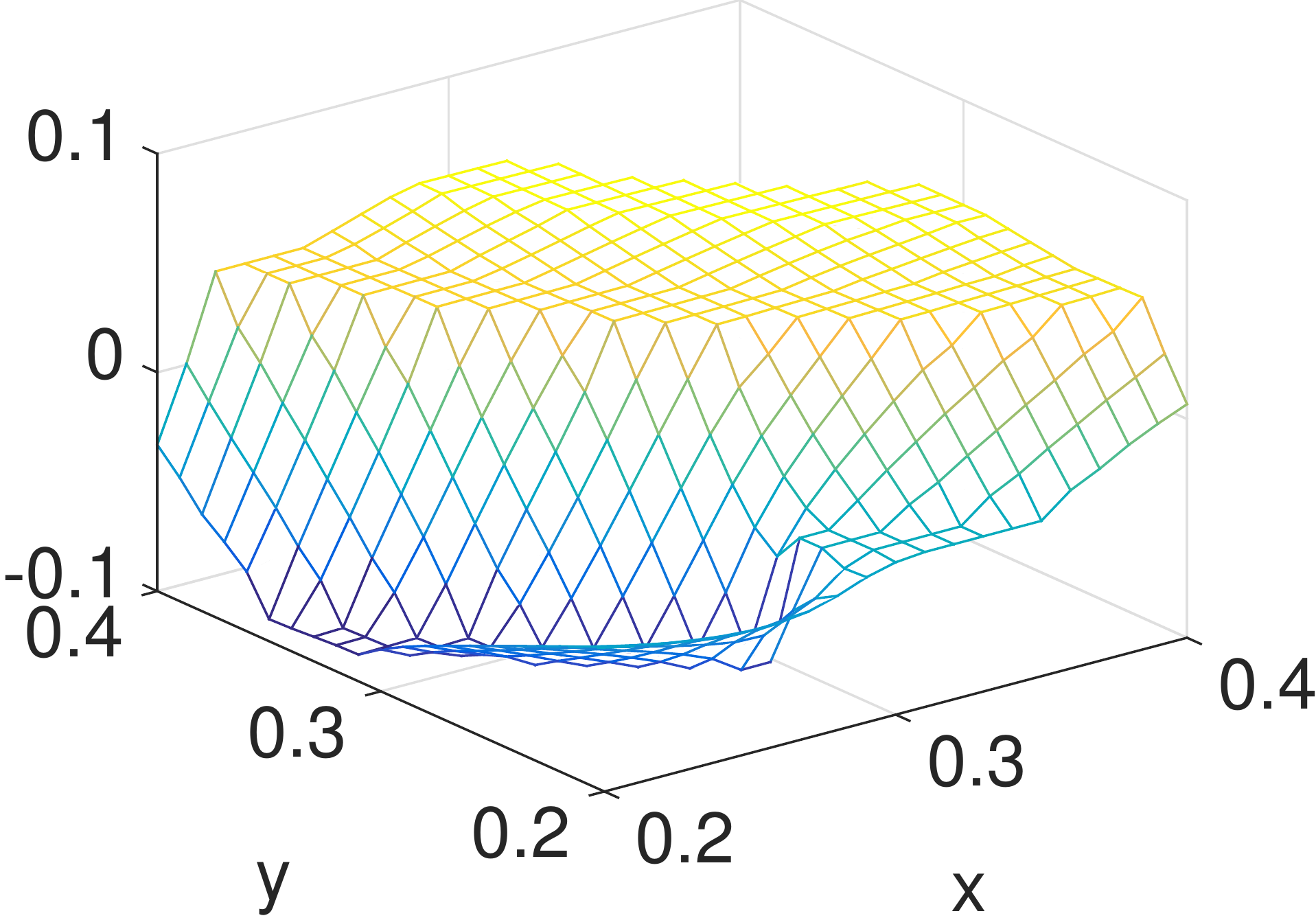}
	\includegraphics[width=0.3\textwidth]{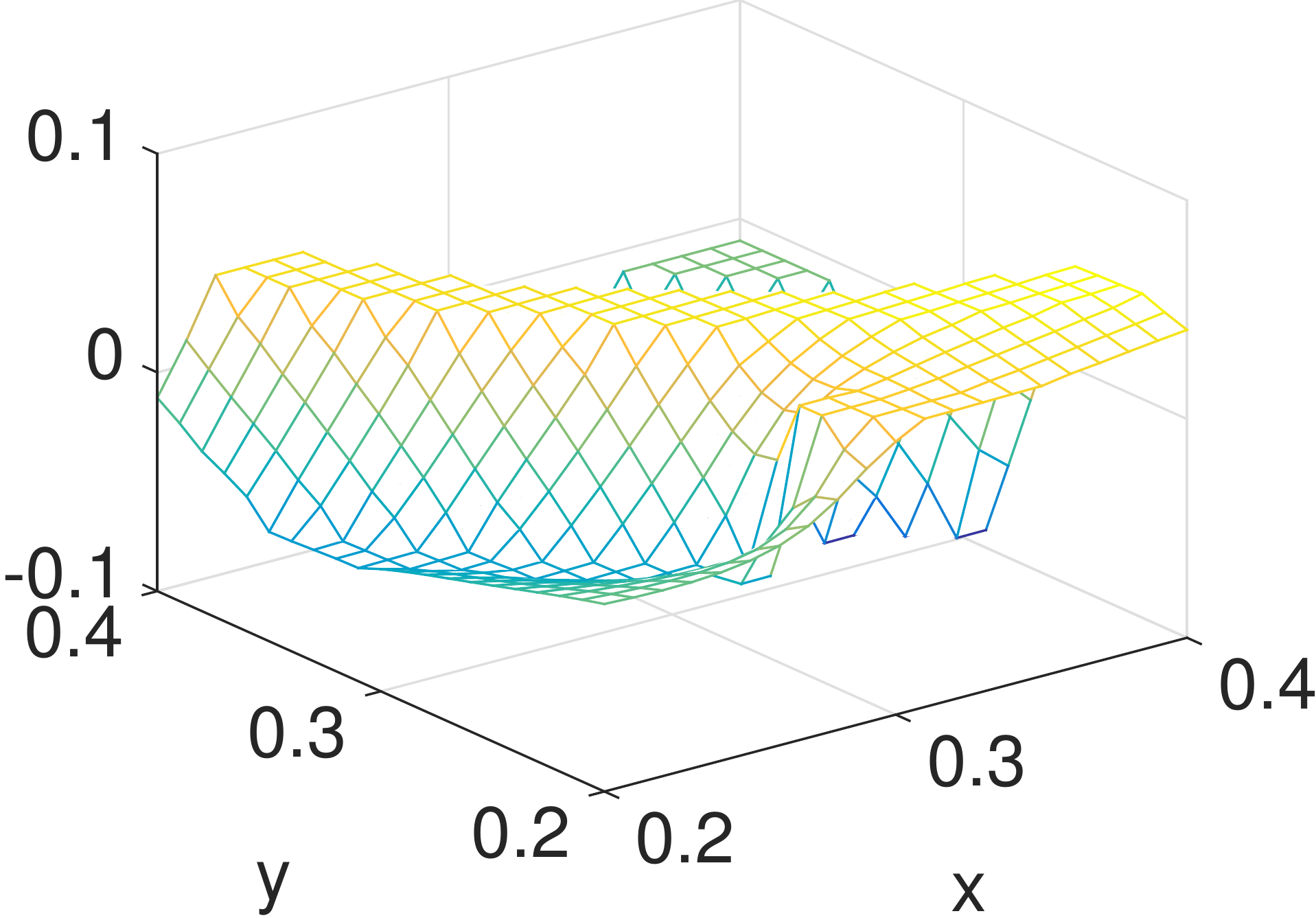}\\
	\includegraphics[width=0.3\textwidth]{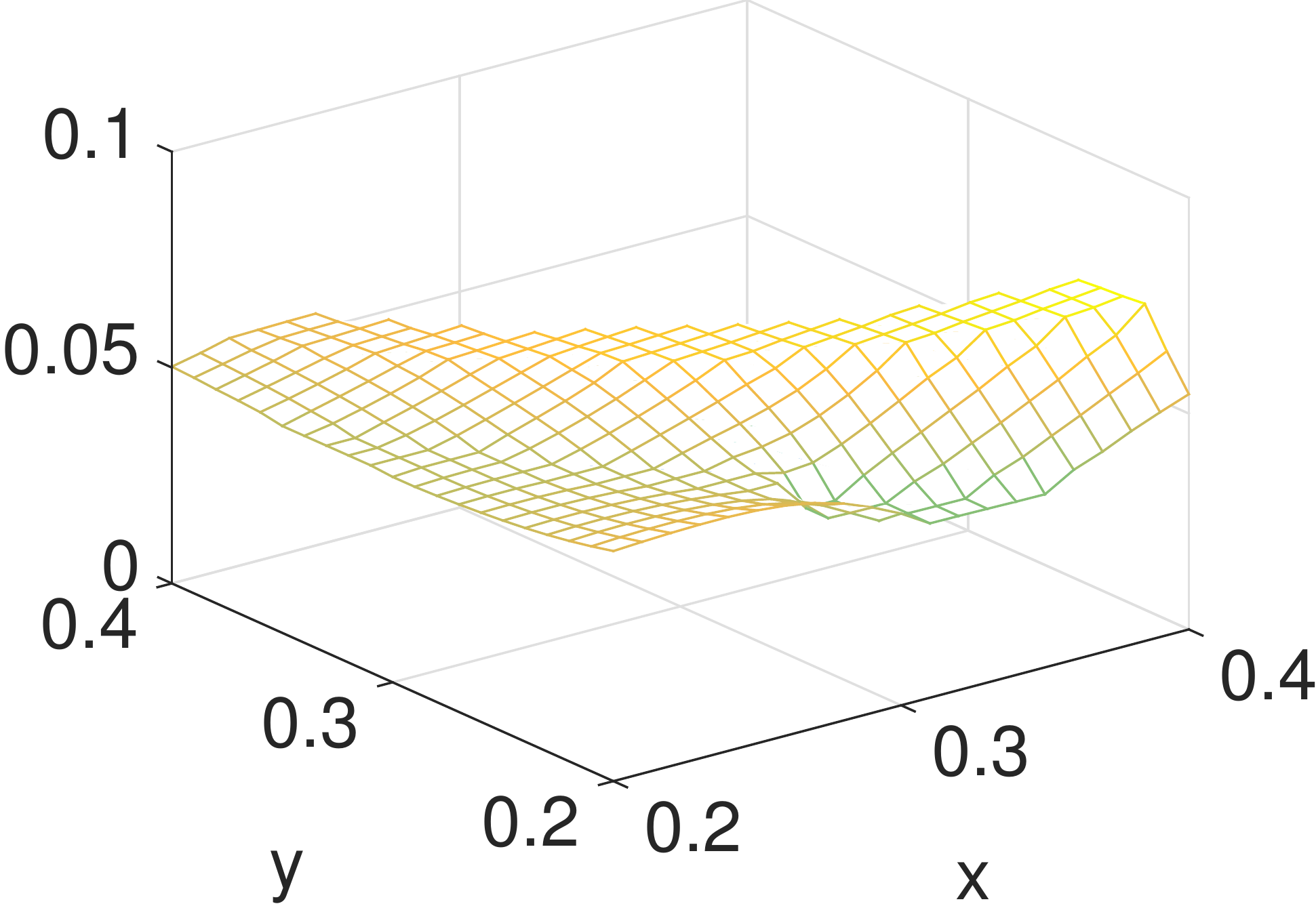}
	\includegraphics[width=0.3\textwidth]{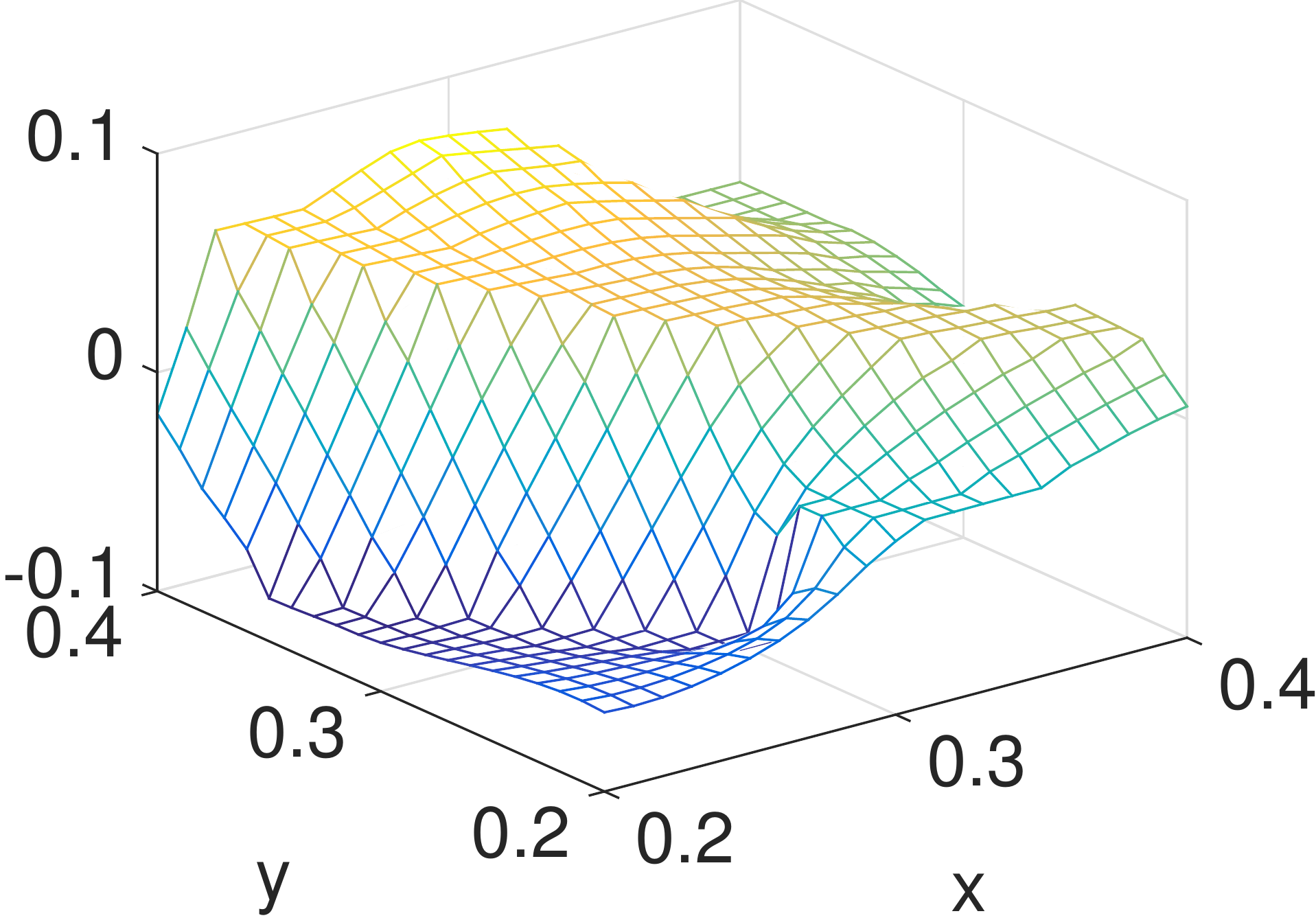}
	\includegraphics[width=0.3\textwidth]{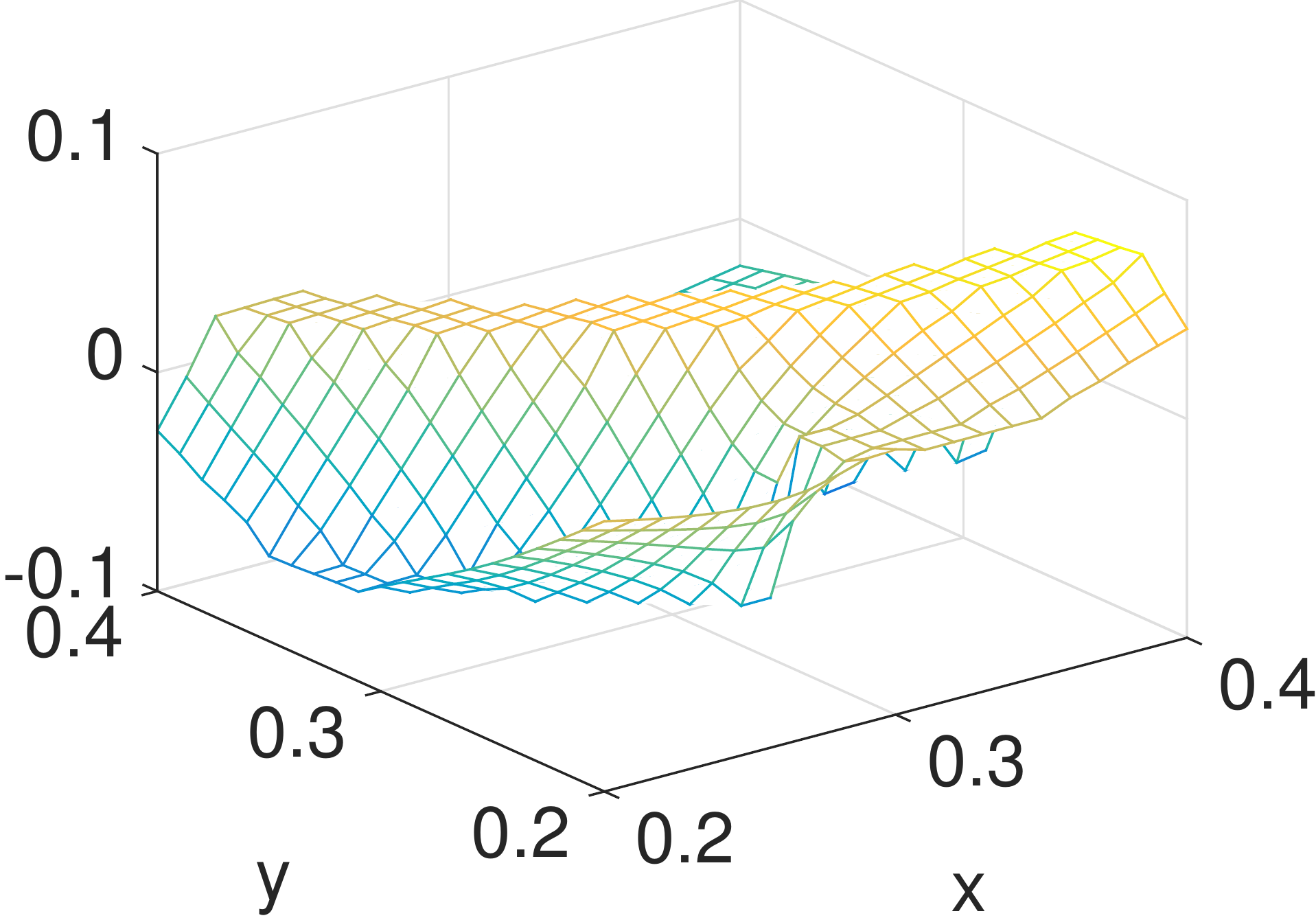}\\
	\includegraphics[width=0.3\textwidth]{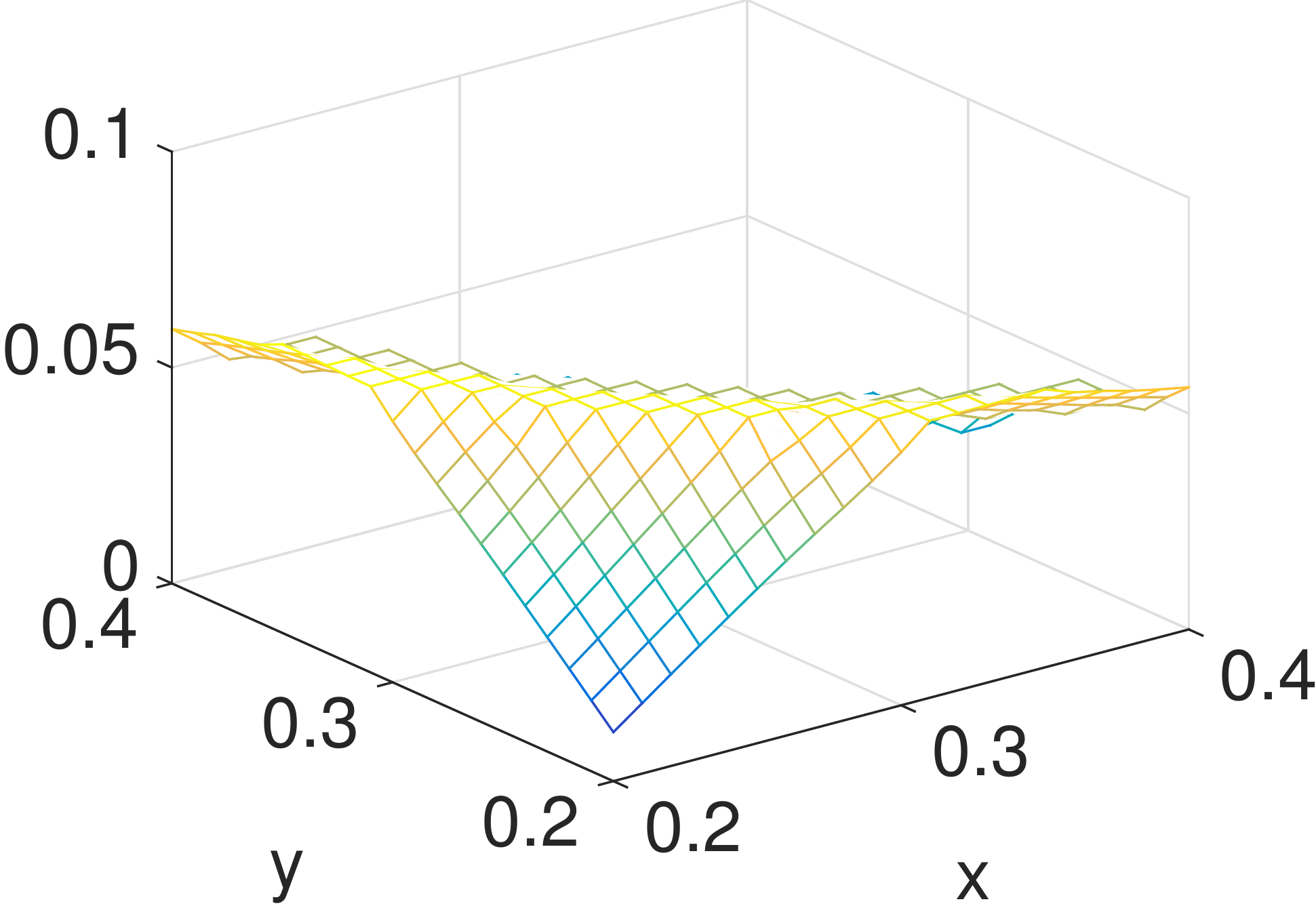}
	\includegraphics[width=0.3\textwidth]{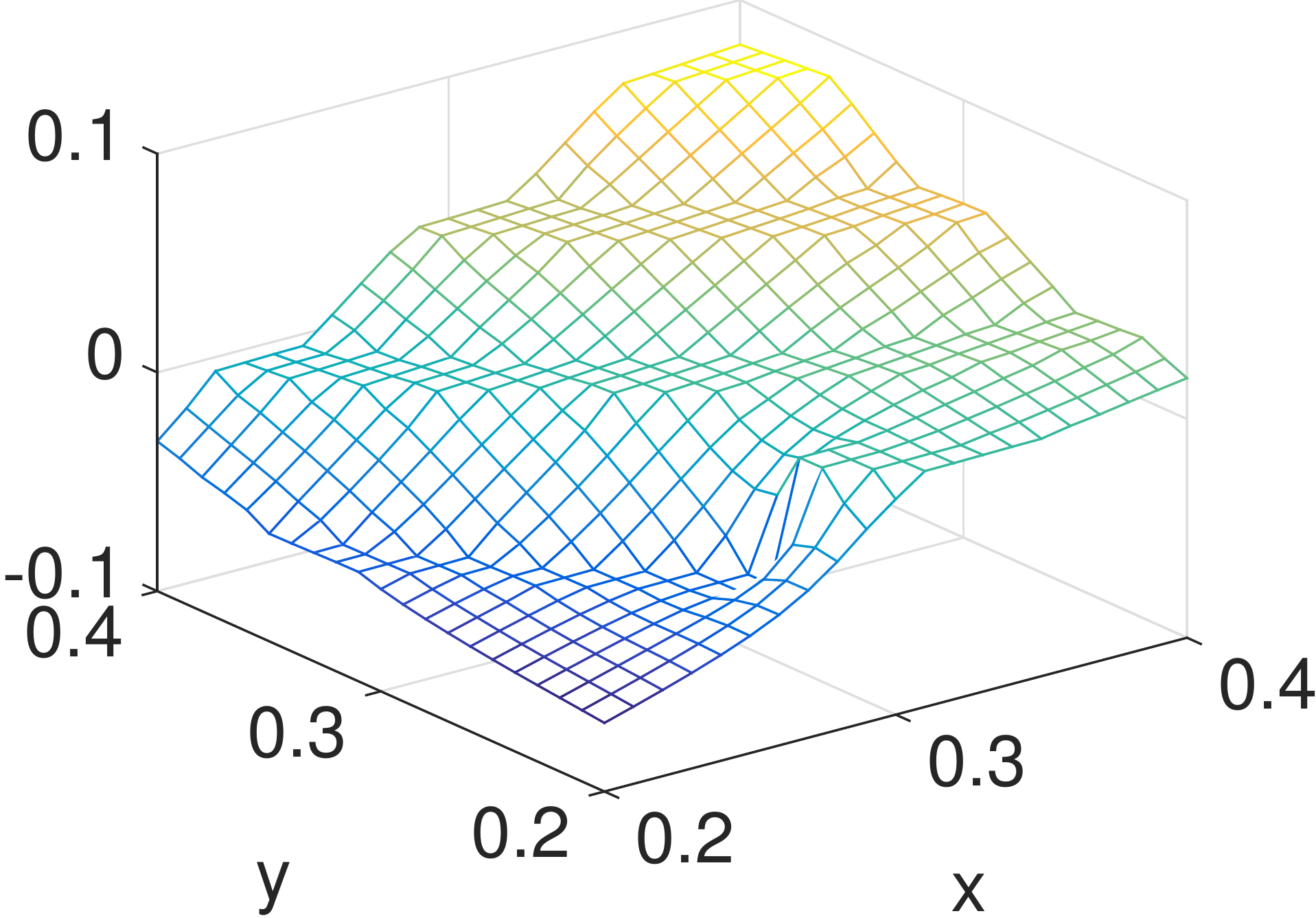}
	\includegraphics[width=0.3\textwidth]{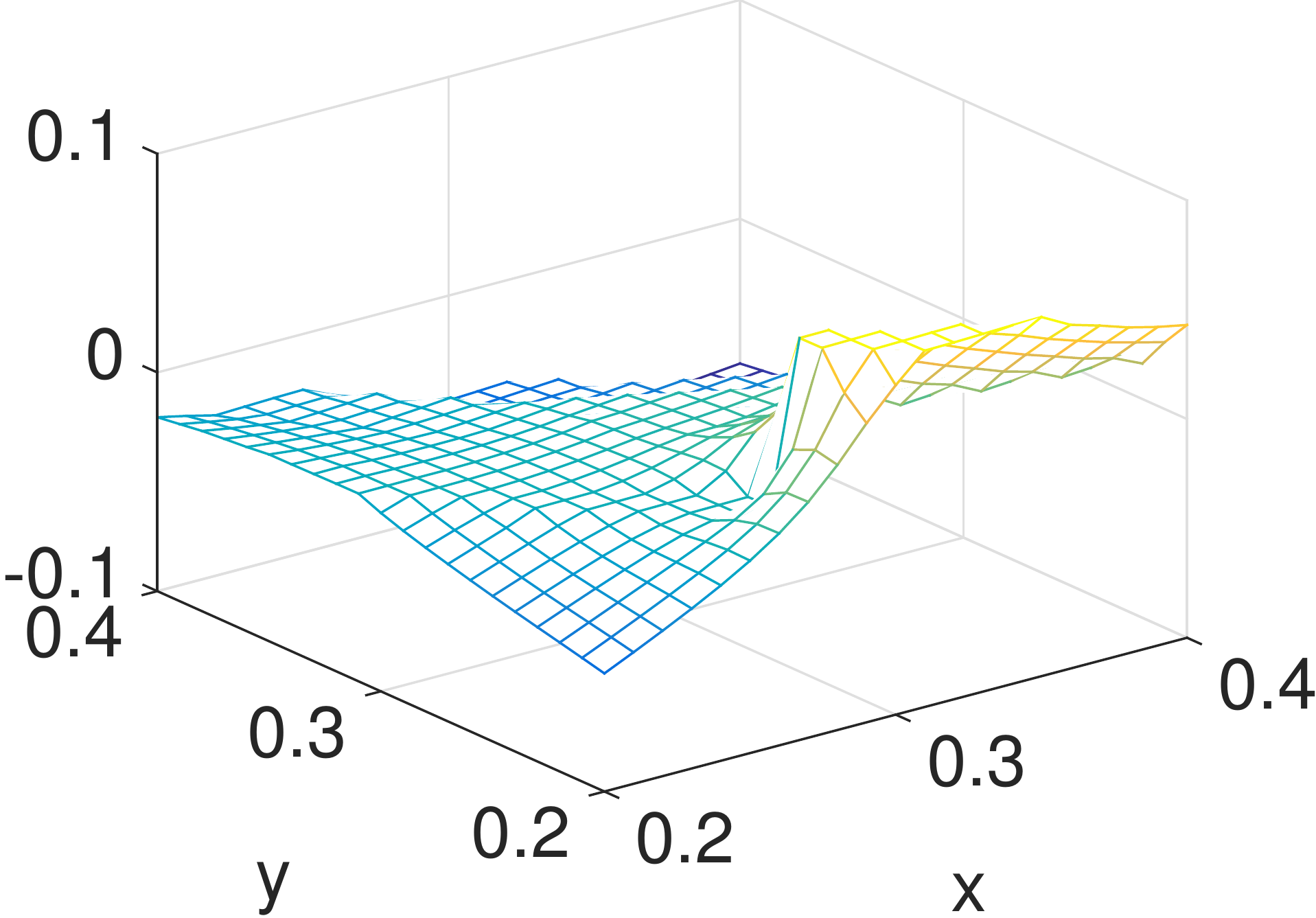}\\
	\caption{Optimal basis functions and their projections onto
		the approximate spaces. First row plots the first three singular vectors of $\wt\Gmat_{2,2}$. Second row plots their projection onto the
		space spanned by the random sampled basis with
		$k_{2,2}=6$. Third row shows projection onto MsFEM space. 
%		The
%		relative error of the three optical basis functions projected onto the random sampled
%		basis is $0.1108$, $0.1101$ and $0.0567$ respectively, while their error after projection onto MsFEM space is $0.2043$,
%		$0.5930$, and $0.7581$ respective. 
		Random sampled basis provide much better accuracy. }
	\label{fig:compare_msfem}
\end{figure}

\begin{table}[tbhp]
%	{\footnotesize
\centering 
		\begin{tabular}{c|c|c|c|c|c|c}
			\hline 
			\multirow{2}{*}{ }& \multirow{2}{*}{SVD (ref.)} &\multirow{2}{*}{MsFEM} & \multicolumn{3}{|c|}{GMsFEM} & \multirow{2}{*}{Random sampling} \\
			\cline{4-6}
			& & & snapshots & ensemble & spectral \\
			\hline
			CPU Time (s) & 6.6569	& 0.1663 & 7.1168 & 0.2068 & 0.0051 & 0.3164 \\
			\hline
			$e_1$ & --- &0.2043 & \multicolumn{3}{c|}{0.0867} &0.1108\\
			\hline
			$e_2$ & --- &0.5930 & \multicolumn{3}{c|}{0.1236}  &0.1101\\
			\hline
			$e_3$& --- &0.7581 &  \multicolumn{3}{c|}{0.0451} &0.0567\\
			\hline
			Error & --- &0.8206 &\multicolumn{3}{c|}{0.1557} & 0.1289\\
			\hline
		\end{tabular}
      
    \caption{CPU time and error comparison of the methods MsFEM,
      GMsFEM and random sampling (proposed method).  Error is
      defined in~\eqref{eqn:re_error}.}
\label{tbl:CPUtime}
\end{table}

\bibliographystyle{siamplain}
\bibliography{elliptic_random_lowrank}

\end{document}